\documentclass[11pt]{article}
\usepackage{amsfonts, amsmath, amssymb, amsgen, amsthm, amscd,
latexsym}
\usepackage{color}
\usepackage[all]{xy}

\def\Id{\mathop{\rm Id}\nolimits}
\def\id{\mathop{\rm Id}\nolimits}

\def\Id{\mathop{\rm Id}\nolimits}

\def\Cb{{\mathbb C}}

\def\Bc{{\cal B}}

\def\Gc{{\cal G}}

\def\Ic{{\cal I}}

\def\Kc{{\cal K}}

\def\a{\alpha}
\def\b{\beta}
\def\d{\delta}
\def\D{\Delta}

\def\s{\sigma}

\def\t{\theta}

\def\ve{\varepsilon}

\def\ot{\otimes}

\def\ra{\rightarrow}

\def\lt{\triangleleft}

\def\p{\partial}

\def\0D{\Delta^{(0)}}
\def\1D{\Delta^{(1)}}
\def\Db{\blacktriangledown}
\def\Co{\,\square\,}

\def\td{\tilde}

\newtheorem{theorem}{Theorem}[section]
\newtheorem{remark}[theorem]{Remark}
\newtheorem{proposition}[theorem]{Proposition}
\newtheorem{lemma}[theorem]{Lemma}

\newtheorem{example}[theorem]{Example}
\newtheorem{definition}[theorem]{Definition}

\def\ni{\noindent}

\def\build#1_#2^#3{\mathrel{
\mathop{\kern 0pt#1}\limits_{#2}^{#3}}}
\newcommand{\ns}[1]{~\hspace{-4pt}_{_{{<#1>}}}}
\newcommand{\ps}[1]{~\hspace{-4pt}^{^{(#1)}}}
\newcommand{\sns}[1]{~\hspace{-4pt}_{_{{<\overline{#1}>}}}}
\newcommand{\nsb}[1]{~\hspace{-4pt}_{^{[#1]}}}

\parindent 0in

\numberwithin{equation}{section}

 \newcommand{\ie}{{\it i.e.\/}\ }

\def\a{\alpha}
\def\b{\beta}

\def\d{\delta}

\def\s{\sigma}
\def\t{\theta}
\def\ve{\varepsilon}

\def\D{\Delta}

\def\ot{\otimes}
\def\part{\partial}

\def\ra{\rightarrow}

\def\text{\hbox}

\def\ot{\otimes}

\def\ra{\rightarrow}

\def\Id{\mathop{\rm Id}\nolimits}

\def\can{\mathop{\rm \bf can}\nolimits}

\def\build#1_#2^#3{\mathrel{
\mathop{\kern 0pt#1}\limits_{#2}^{#3}}}

\numberwithin{equation}{section}
\parindent 0in
\newcommand{\comment}[1]{\relax}


\begin{document}
\title{\bf  Quantum Groupoids and their Hopf Cyclic Cohomology }

\author{
\begin{tabular}{cc}
Mohammad Hassanzadeh \thanks{Department of Mathematics  and   Statistics,
     University of New Brunswick, Fredericton, NB, Canada}\quad and \quad  Bahram Rangipour$~^\ast$
      \end{tabular}}
\date{}
\maketitle

\begin{abstract}
\ni A new  quantization of groupoids under the name of  {\it $\times$-Hopf coalgebras} is introduced.  We develop   a Hopf cyclic theory with coefficients   in
 stable-anti-Yetter-Drinfeld modules for $\times$-Hopf coalgebras. We use $\times$-Hopf coalgebras to study coextensions of coalgebras.   Finally,  equivariant $\times$-Hopf coalgebra Galois coextensions are defined and applied  as functors between categories of stable anti-Yetter-Drinfeld modules over $\times$-Hopf coalgebras involved in the coextension.
\end{abstract}

\section*{Introduction}

 It is  known that $\times$-Hopf algebras quantize groupoids \cite{b1}. However,  this quantization  is not comprehensive  enough \cite{bs2}. To wit, let us  consider the  natural  cyclic module, \`ala Connes \cite{NCG}, defined as the  free module  generated by the nerve of a groupoid; it is then  easy to see that the  Hopf cyclic complex of the groupoid algebra viewed as a $\times$-Hopf algebra  does  not coincide with the  cyclic complex of the cyclic module.

To remedy  this lack of consistency,  we structure  our correct gadget  on the groupoid coalgebra of a groupoid instead of groupoid algebra. This way we obtain an object that  is called  $\times$-Hopf coalgebra.  Naturally  $\times$-Hopf coalgebras  generalize  Hopf algebras and week Hopf algebras. The simplest example of such an object is the enveloping coalgebra of  a coalgebra.

After  defining  $\times$-Hopf coalgebras axiomatically, we study their  representation and corepresentation to define   stable-anti-Yetter-Drinfeld modules over   $\times$-Hopf coalgebras. Next, we introduce a  Hopf cyclic theory for $\times$-Hopf coalgebras with coefficients in stable-anti-Yetter-Drinfeld modules.   We observe that the cyclic complex of groupoid coalgebra perfectly   coincides with the nerve generated cyclic complex.

The use of Hopf algebras in  study of   (co)algebra (co)extensions is not new, for example see \cite{bh}.   $\times$-Hopf algebras are  used to define appropriate Galois extension of algebras over algebras \cite{bs2}. Similarly we use  $\times$-Hopf coalgebras  to define Galois coextensions of coalgebras over coalgebras.
By the work of Jara-Stefan \cite{JaSt:CycHom} and B\"ohm-Stefan \cite{bs2} one knows that Hopf Galois extensions are an interesting  source of stable-anti-Yetter-Drinfeld modules. This construction was generalized by our previous work \cite{hr}: any equivariant Hopf Galois extension defines a functor between the category of stable-anti-Yetter-Drinfeld modules of the $\times$-Hopf algebras involved in the extension.  To state the corresponding  result,  we introduce equivariant $\times$-Hopf
coalgebra Galois coextensions and use them as a functor between the category of stable anti Yetter-Drinfeld modules of the $\times$-Hopf coalgebras associated with  the coextension.

\vspace{1cm}

\textbf{Notations}:
In this paper all objects are vector spaces over  $\Cb$,  the  field of complex numbers. Coalgebras are denoted by $C$ and  $D$, and Hopf algebras are denoted by  $H$.   We use  $\mathcal{K}$ and $\mathcal{B}$ as  left and right $\times$-Hopf coalgebra, respectively. We denote left coactions  of coalgebras by $\Db(a)=a\sns{-1}\ot a\sns{0}$,  left  coactions of right $\times$-Hopf coalgebras by  $\Db(a)=a\nsb{-1}\ot a\nsb{0}$,  and left coaction of left $\times$-Hopf coalgebra  by $\Db(a)=a\ns{-1}\ot a\ns{0}$. We use similar notations for  right coactions of the above objects. If $X$ and $Y$ are right and left  $C$-comodules, we define their cotensor product $X\Co_C Y$ as follows,
\begin{equation*}
  X\Co_C Y:= \left\{x\ot y; \quad x\sns{0}\ot x\sns{1}\ot y= x\ot y\sns{-1}\ot y\sns{0}\right\}
\end{equation*}


\tableofcontents


\section{Motivation: Groupoid homology revisited}

\label{section-1}

In this section we briefly recall groupoids and their homology. We justify the need of    $\times$-Hopf coalgebras as a   reasonable, and missed,   quantization of groupoids.

It is known that the  Hopf cyclic homology, dual theory,  of Hopf algebras generalizes group homology \cite{bm1}.  One associates a cyclic module  to any Hopf algebra equipped with an extra structure such as a modular pair in involution,  or  a stable anti Yetter-Drinfeld module in general, \cite{ConMos:HopfCyc}(see also\cite{bm1, HaKhRaSo1,HaKhRaSo2}). When the Hopf algebra is a group algebra  one observes that the corresponding cyclic module coincides with the cyclic  module associated to the group in question \cite{bm1}.

\medskip

We would like to define appropriate  Hopf algebraic structure and define its cyclic theory  to  generalizes groupoids and their  cyclic theory.   One knows that  the groupoid algebra of a groupoid is  a $\times$-Hopf algebra. However its cyclic complex  does not coincide with the cyclic complex  of the groupoid  generated by the nerve  \cite{bs2}.

We denote a  groupoid  by $\Gc=(\Gc^1, \Gc^0)$. Here $\Gc^1$ denotes the set of morphisms and $\Gc^0$ is the set of objects. We denote the source and target maps by $t,s: \Gc^1\ra \Gc^0$, \ie, for  $g\in {\rm Mor}(x,y)$, $s(g)=x$, and $t(g)=y$.

A groupoid is called cyclic if for any $A\in \Gc^0$ there is a morphism $\t_A\in {\rm Mor}(A,A)$ such that $f\t_A=\t_Bf$ for any $f\in {\rm Mor}(A,B)$ . Any groupoid is trivially  cyclic via $\t^A=\Id_A$. We denote a groupid $\Gc$ equipped with a cyclic structure $\t$ by $(\Gc,\t)$.
 Let us recall the nerve of a groupoid from  \cite{bur}. For a groupoid $\Gc=(\Gc^1,\Gc^0)$  we denote by $\Gc_n$, for $n\ge 2$,  the nerve of $\Gc$, as  the set of all $(g_1, g_2, \ldots,g_n)$ such that
 $s(g_i)=t(g_{i+1})$, and $\Gc_i=\Gc^i$, for $i=0,1$.  Then one easily observes that for any cyclic groupoid $(\Gc, \t)$,  $\Gc_{\bullet}$ becomes a cyclic set with the following morphisms,

 \begin{align}
&\p_i:\Gc_{\bullet}\ra \Gc_{\bullet-1}, \qquad 0\le i\le \bullet\\
&\s_j: \Gc_\bullet\ra \Gc_{\bullet+1}, \qquad 0\le j\le \bullet\\
&\tau:\Gc_{\bullet}\ra \Gc_{\bullet}
 \end{align}

 which are defined by
 \begin{align}
& \p_0[g_1,\ldots,g_n]= [g_2,\ldots,g_n],\\
 &\p_i[g_1,\ldots,g_n]= [g_1,\ldots,g_ig_{i+1},\ldots, g_n],\\
 &\p_n[g_1,\ldots,g_n]= [g_1,\ldots,g_{n-1}],\\
 &\s_j[g_1,\ldots,g_n]= [g_1,\ldots, g_{i-1}, \Id_{s(g_i)},g_i \ldots, g_n ],\\
 &\tau[g_1,\ldots,g_n]= [\t_{s(g_n)}g_n^{-1}\cdots g_1^{-1}, g_1, \ldots, g_{n-1}].
 \end{align}

 The special cases $\p_0,\p_1:\Gc_1\ra \Gc_0$ are defined by $\p_0=s$, and $\p_1=t$.  Finally $\s_0:\Gc_0\ra \Gc_1$  is defined by $\s_0(A)=\Id_A$.  We denote this cyclic module by $C_\bullet(\Gc,\t)$ and its cyclic homology  by $HC_\bullet(\Gc,\t).$

Now let $\Bc=\Cb\Gc^1$ be the coalgebra generated by $\Gc^1$, \ie as a module it is the  free module generated by $\Gc$ with $\D(x)=x\ot x$, $\ve(x)=1$.  Similarly  we set $C:=\Cb\Gc^0$  as the coalgebra generated by $\Gc^0$.

The source and target maps induce the following morphisms of coalgebras
\begin{equation}\label{last1}
\a, \b:\Bc\ra C,  \quad \a(g):=s(g), \quad \b(g):=t(g).
\end{equation}
The maps $\a$ and $\b$ induce a  $C$-bicomodule structure on $C$ as follows,
\begin{align}
&\Db_L:\Bc\ra C\ot \Bc, \qquad\Db_L(g)= \b(g)\ot g\\
&\Db_R:\Bc\ra \Bc\ot C, \qquad \Db_R(g)=g\ot \a(g).\label{last2}
\end{align}

 One then identifies
 $\Bc\Co_C \Bc$ with $\Gc_2(\Gc)$ in the usual way.
 Here the cotensor space is defined by
 \begin{equation}\Bc\Co_C\Bc :=\left\{g\ot h\in \Bc\ot \Bc\mid g\ps{1}\ot \a(g\ps{2})\ot h=g\ot \b(h\ps{1})\ot h\ps{2}\right\}
 \end{equation}

Next, one  uses the composition rule of $\Gc$ to define a multiplication as follows,
  \begin{equation}
 \mu: \Bc\Co_C \Bc\ra \Bc, \quad \mu(g\ot h)=gh.
 \end{equation}

 There is also a map $\eta: C\ra \Bc$ defined by $\eta(A)=\Id_A$. The inverse of $\Gc$ induces the following map
 \begin{equation}\label{last3}
\nu^{-1}: \Bc\Co_{C_{\rm cop}} \Bc\ra \Bc\Co_C\Bc, \quad \nu^{-1}(g\ot h)= g\ot g^{-1}h,
 \end{equation}
 which defines an inverse for the map
 \begin{equation}
 \nu: \Bc\Co_C \Bc\ra \Bc\Co_{C_{\rm cop}}\Bc, \quad \nu(g\ot h)= g\ot gh.
 \end{equation}

In the next step  we define our coefficients  $C$ to be acted and coacted by $\Bc$. First we define the coaction of $\Bc$ on $C$ by
 \begin{equation}\label{coact-group}
 \Db:C\ra   \Bc\ot C, \quad \Db(A)= \t_A\ot A.
 \end{equation}

 We also let $\Bc$ act on $C$ via
 \begin{align}\label{act-group}
 C\Co_C \Bc\ra C, \qquad c\cdot g=\ve(c)s(g)
 \end{align}
 By this information we offer  a new cyclic module;
 \begin{equation}
 C_\bullet(\Bc,~ ^\t C):= \underset{\bullet+1~~{\rm  times}}{ \underbrace{\Bc \Co_C\Co\cdots\Co_C \Bc} }\Co_\Bc C.
 \end{equation}

  Here $C^n(\Bc,~^\t C)$ can be identified with the vector space generated by  all elements $[ g_0,\ldots, g_n,c]\in  \Gc^1\times\cdots\times\Gc^1\times \Gc^0 $ such that
  \begin{equation}
  s(g_i)=t(g_{i+1}), s(g_n)=t(g_0) \quad \text{and }\quad g_0\cdots g_n=\t_c.
   \end{equation}
   One easily observe that $C_\bullet(\Bc,~^\t C)$ is a cyclic module via the following morphisms.
    \begin{align}
    &\p_i([g_0, \ldots,g_n,c])=[ g_0, \ldots, g_ig_{i+1}, \ldots, g_n,c], \\
    &\p_n([g_0, \ldots,g_n,c])=[g_ng_0, g_1, \ldots, g_{n-1}, c\cdot  g_n^{-1}],\\
    &\s_j([g_0, \ldots,g_n,c])= [g_0, \ldots,g_i,\id_{t(g_i)},g_{i+1}, \ldots, g_n,c],\\
    &\tau([g_0, \ldots,g_n,c])= [ g_n,g_0, \ldots, g_{n-1}, c\cdot g_n^{-1}].
    \end{align}

   As the final step one observes that $C_\bullet(\Bc,~^\t C)$ and $C_\bullet(\Gc,\t)$  are isomorphic as cyclic modules via
   \begin{align}
  & \Ic: C_\bullet(\Bc,~^\t C)\ra C_\bullet(\Gc,\t),\quad \Ic([g_0, \ldots, g_n,c])= [g_1,g_2, \ldots,g_n],
   \end{align}
   with the inverse map given by $\Ic^{-1}:  C_\bullet(\Gc,\t)\ra C_\bullet(\Bc,~^\t C),$
   \begin{equation}
   \Ic^{-1}([g_1, \ldots, g_n])= [ \t_{s(g_n)}(g_1\ldots g_n)^{-1}, g_1, \ldots, g_n, s(g_n)].
   \end{equation}

As we see above, we are faced with a new structure which is not  $\times$-Hopf algebra. Our aim in the sequel sections is to study such objects and define a cyclic theory for them (see Example \ref{groupoid1}). Also we observe   that  the desired  cyclic theory is capable to handle coefficients. The above cyclic structure $\t$ defines an example of such  coefficients (see Remark \ref{groupoid2}).


\section{ $\times$-Hopf coalgebras}\label{SS-2-1}
In this section we introduce $\times$-Hopf coalgebras as a formal dual of $\times$-Hopf algebras. We carefully
 show that they define symmetries  for (co)algebras with several objects. We apply them to define Hopf cyclic cohomology of (co)algebras with several objects under the symmetry of $\times$-Hopf coalgebras.

\subsection{Bicoalgebroids and $\times$-Hopf coalgebras}
It is well known that the category of $\times$- Hopf algebras
contains Hopf algebras as a full subcategory. However,
$\times$-Hopf algebras are not a  self-dual  alike  Hopf algebras.
In fact this lack of self-duality is  passed on to the $\times$-Hopf
algebras from bialgebroids.

 There  are many generalizations of Hopf algebras in
 the literature and  the most natural ones are  $\times$-Hopf algebras \cite{sch}.
  In this paper we  are interested in the formal dual  of $\times$-Hopf algebras.
  For the convenience of the reader  we  present a very short description of $\times$-Hopf algebras.
  Let $R$ and $\Kc$ be algebras
  with two algebra maps $\mathfrak{s}:R\longrightarrow \Kc$
   and $\mathfrak{t}:R^{op}\longrightarrow \Kc$, called source and target, such that their
  ranges commute with one another. We equip  $\Kc$  and $\Kc\ot_R \Kc$
  with  $R$-bimodule structures using the source and target maps. More precisely, $r\triangleright k=\mathfrak{s}(r)k$ and $k\triangleleft r =  \mathfrak{t}(r)k$.
   We assume that there are $R$-bimodule  maps  $\Delta:\Kc\longrightarrow \Kc\otimes_{R} \Kc$ and
     $\varepsilon:\Kc\longrightarrow R$ via which   $\Kc$ is an $R$-coring.
     The data $(\Kc,\mathfrak{s},\mathfrak{t},\D,\ve)$ is called a left $R$-bialgebroid if
      for $k_1,k_2\in \Kc$ and $r\in R$ the following identities hold

\begin{enumerate}
 \item [i)] $k^{(1)} \mathfrak{t}(r)\otimes_{R} k^{(2)} = k^{(1)} \otimes_{R} k^{(2)} \mathfrak{s}(r),$
\item[ii)] $\Delta(1_{\Kc})=1_{\Kc}\ot_R 1_{\Kc}, \quad \text{ and} \quad  \Delta(k_1k_2)=k_1^{(1)} k_2^{(1)}\otimes_R k_1^{(2)} k_2^{(2)}$.
\item[iii)] $\varepsilon(1_{\Kc})=1_{R} \quad \text{and} \quad \varepsilon(k_1k_2)=\varepsilon(k_1\mathfrak{s}(\varepsilon(k_2))).$
\end{enumerate}
 A left $R$-bialgebroid $(\Kc,\mathfrak{s},\mathfrak{t},\Delta,\ve)$ is said to be a left $\times_{R}$-Hopf algebra if the  Galois map
\begin{align}\label{aaaa}
\nu: \Kc\otimes_{R^{op}}\Kc\longrightarrow \Kc\otimes_{R}\Kc, \qquad  k\otimes_{R^{op}}k'\longmapsto k^{(1)}\otimes_{R}k^{(2)}k',
\end{align}
is bijective.  The role of antipode in $\times_R$-Hopf algebras is played by the following map,
\begin{align}
\begin{split}
\Kc\ra \Kc\ot_{R^{op}}\Kc, \quad k\mapsto \nu^{-1}(k\ot_{R} 1_{\mathcal{K}}).
\end{split}
\end{align}
Similarly, one  has the definition of right bialgebroids and right $\times$-Hopf algebras.

\bigskip

 Let us  define our main object of this paper. To do so,  we first  recall
  bicoalgebroid from  \cite{bm}. Roughly speaking,  bicoalgebroids
 are formal  dual  of  bialgebroids as they are defined  by reversing the arrows in the definition of bialgebroids.
Let  $ \Kc$ and  $C$ be two coalgebras with  coalgebra maps $\alpha: \Kc \to C$ and $\beta: \Kc \to C_{{\rm cop}}$,
such that their images  cocommute, \ie
\begin{equation}
\alpha (h^{(1)}) \ot \beta(h^{(2)}) = \alpha (h^{(2)}) \ot \beta(h^{(1)}).
\end{equation}

 These maps endow  $\Kc$ with a $C$-bicomodule
structure, via the left and right  $C$-coactions as follows,
\begin{equation}
 \blacktriangledown_L(h)= \alpha(h\ps{1})\ot h\ps{2}, \quad \blacktriangledown_R(h)= h\ps{2}\ot \beta(h\ps{1}).
\end{equation}
The next we assume  two $C$-bicomodule maps  $\mu_\Kc: \Kc \square_C \Kc \longrightarrow \Kc$ and $\eta_\Kc: C  \longrightarrow \Kc$ called the multiplication and the unit,   respectively, which  satisfy  the following axioms:
\smallskip

\begin{align}\label{tak}
& i) \qquad \sum_i \mu(g_i \ot h_i^{(1)}) \ot \alpha(h_i^{(2)}) = \mu(g_i^{(1)}\ot h_i)\ot \beta(g_i^{(2)}),\\
\label{brzmil2}
& ii)\qquad  \Delta \circ \mu (\sum_i g_i \ot h_i) = \sum_i \mu (g_i^{(1)} \ot
h_i^{(1)}) \ot \mu (g_i^{(2)} \ot h_i^{(2)}),\\
\label{var}
& iii)\qquad \varepsilon (g) \varepsilon (h) = \varepsilon \circ \mu (g \ot h),\\
\label{2.8}
& iv)\qquad  \mu \circ (\eta \Co \Id_\Kc) \circ \Db_L = \Id_\Kc= \mu \circ (\Id_\Kc \Co \eta)
\circ \Db_R,\\
& v)\qquad \label{etaalphaeta} \Delta (\eta (c)) = \eta (c)^{(1)} \ot \eta
(\alpha (\eta(c)^{(2)})) = \eta (c)^{(1)} \ot \eta (\beta
(\eta(c)^{(2)})),\\
&vi)\qquad \label{veeta} \varepsilon (\eta (c)) = \varepsilon (c).
\end{align}

Furthermore the multiplication map $\mu$ is associative. We call  $( \Kc, \Delta_\Kc, \varepsilon_\Kc, \mu, \eta, \alpha, \beta, C )$ a left bicoalgebroid \cite{bm} (also see\cite{bal}). For simplicity,  we use the notation $\mu(h\ot k)= hk$ for all $h,k\in \Kc$.
It is shown in  \cite{bm} and \cite{bal} that  the conditions i), \dots, vi) make sense.

\begin{lemma}\label{leftbi}
 Let  $\mathcal{K}=( \Kc, C )$ be a left bicoalgebroid. Then the following properties hold for all $h,k\in \mathcal{K}$ and $c\in C$.
\begin{itemize}
 \item[i)] $\alpha(\eta(c))=c.$
\item[ ii)]$ \beta(\eta(c))=c.$
\item[ iii)]$\alpha(hk)=\alpha(h)\ve(k).$
\item[ iv)]$\beta(hk)= \ve(h)\beta(k).$
\end{itemize}
\end{lemma}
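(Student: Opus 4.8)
The plan is to recognize these four identities as the coalgebra-theoretic duals of the familiar bialgebroid relations $\varepsilon(\mathfrak{s}(r))=r$ and $\varepsilon(k\,\mathfrak{s}(r))=\varepsilon(k)r$, and to prove them \emph{not} by manipulating the axioms i)--vi) head-on (symmetric counit contractions of those relations tend only to reproduce tautologies), but by exploiting the standing assumption that $\eta$ and $\mu$ are $C$-bicomodule maps and then contracting each bicomodule-map equation against a counit on the correctly chosen tensor leg. Throughout I read i) and ii) with the argument lying in the base coalgebra $C$ (so that $\eta$ applies), and iii), iv) for $h\ot k\in\Kc\Co_C\Kc$, where $hk=\mu(h\ot k)$ is defined.

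For i) I would unpack the left $C$-comodule map property of $\eta$. Since the left coaction on $\Kc$ is $\Db_L(x)=\alpha(x\ps{1})\ot x\ps{2}$ and on $C$ it is $\Delta_C$, being a left comodule map reads
\[
\alpha(\eta(c)\ps{1})\ot \eta(c)\ps{2}=c\ps{1}\ot \eta(c\ps{2}).
\]
Applying $\Id_C\ot\varepsilon_\Kc$ and using the counit axiom on the left gives $\alpha(\eta(c))$, while on the right $\varepsilon_\Kc\circ\eta=\varepsilon_C$ (axiom vi) and the counit on $C$ give $c\ps{1}\varepsilon(c\ps{2})=c$; hence $\alpha(\eta(c))=c$. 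For ii) I repeat the argument with the right coaction $\Db_R(x)=x\ps{2}\ot\beta(x\ps{1})$, now contracting with $\varepsilon_\Kc\ot\Id$; the only extra care is the $C_{\rm cop}$ twist carried by $\beta$, which is harmless at the level of counits since $\varepsilon_{C_{\rm cop}}=\varepsilon_C$, and one obtains $\beta(\eta(c))=c$.

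For iii) and iv) I would use that $\mu\colon\Kc\Co_C\Kc\to\Kc$ is a $C$-bicomodule map, where the left coaction on $\Kc\Co_C\Kc$ is $\alpha$ applied to the first leg and the right coaction is $\beta$ applied to the second. The left comodule-map identity for $\mu$ on $g\ot h$ reads
\[
\alpha\big((gh)\ps{1}\big)\ot (gh)\ps{2}=\alpha(g\ps{1})\ot g\ps{2}h .
\]
Contracting with $\Id_C\ot\varepsilon_\Kc$, the left side collapses to $\alpha(gh)$ by the counit, while the right side becomes $\alpha(g\ps{1})\,\varepsilon(g\ps{2}h)=\alpha(g\ps{1})\varepsilon(g\ps{2})\varepsilon(h)=\alpha(g)\varepsilon(h)$ after invoking the variance axiom \eqref{var}; this is iii). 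Symmetrically, the right comodule-map identity $(gh)\ps{2}\ot\beta\big((gh)\ps{1}\big)=g\,h\ps{2}\ot\beta(h\ps{1})$ contracted with $\varepsilon_\Kc\ot\Id_C$ yields $\beta(gh)=\varepsilon(g)\beta(h)$, which is iv).

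The genuinely delicate points, and the place where an error would creep in, are purely bookkeeping: getting the coaction conventions right (that $\alpha$ decorates the first tensor leg and $\beta$ the second, with the $C_{\rm cop}$ flip on $\beta$), choosing in each case the one-sided counit contraction that does not merely return an identity, and checking that the elements fed to $\mu$ (such as $g\ps{2}\ot h$) still lie in $\Kc\Co_C\Kc$ so that the contractions are legitimate on the cotensor product. This last point is exactly the content of the well-definedness computations \eqref{mmm1}--\eqref{mmm3} already carried out above, so it can be cited rather than redone.
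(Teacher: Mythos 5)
Your proof is correct, and it takes a genuinely different route from the paper's. The paper handles i) and ii) in one line --- apply $\ve\ot\ve$ to both sides of \eqref{etaalphaeta} and invoke \eqref{veeta} --- and does not prove iii) and iv) at all, citing \cite[page 8]{bal} instead. You derive all four identities from the standing assumption that $\eta$ and $\mu$ are $C$-bicomodule maps, contracting each colinearity identity against a counit on one tensor leg, using \eqref{veeta} for i)--ii) and \eqref{var} for iii)--iv). This buys two things. First, self-containedness: iii) and iv) are actually proved rather than outsourced. Second, your parenthetical warning about symmetric counit contractions is precisely what goes wrong in the paper's own argument: applying $\ve\ot\ve$ to \eqref{etaalphaeta} lands in scalars, and since $\alpha$ and $\beta$ are coalgebra maps (so $\ve\circ\alpha=\ve\circ\beta=\ve_{\Kc}$) while $\ve\circ\eta=\ve_C$, every term collapses to $\ve(c)$, so the contraction only reproduces $\ve(c)=\ve(c)$ and cannot yield $\alpha(\eta(c))=c$. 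Even the asymmetric contraction $\ve\ot\Id$ of \eqref{etaalphaeta} gives only $\eta(\alpha(\eta(c)))=\eta(c)$, which implies i) only if $\eta$ is injective --- an assumption not available. By contrast, contracting the left colinearity of $\eta$, namely $\alpha(\eta(c)\ps{1})\ot\eta(c)\ps{2}=c\ps{1}\ot\eta(c\ps{2})$, with $\Id\ot\ve$ genuinely produces i), and the $\beta$-side argument gives ii); so your route not only differs from the paper's but repairs it. The remaining delicate point --- that $g\ps{2}\ot h$ must still lie in $\Kc\Co_C\Kc$ so that $\mu(g\ps{2}\ot h)$ and $\ve(g\ps{2}h)$ are legitimate --- you correctly discharge by citing \eqref{mmm1}--\eqref{mmm3}. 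You also silently fixed the statement's typing: as printed it quantifies i) and ii) over $h\in\Kc$, which does not typecheck since $\eta$ is defined on $C$; your reading with the argument in $C$ is the intended one.
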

\begin{proof}
    The relations i) and ii) are obtained by applying $\Id  \ot \ve$ and $\ve \ot \Id$ on the both hand sides of
    \begin{align*}
      & c\ps{1}\ot \eta(c\ps{2})= \alpha(\eta(c)\ps{1})\ot \eta(c)\ps{2},\\
      &\eta(c\ps{1}) \ot c\ps{2}= \eta(c)\ps{2}\ot \beta(\eta(c)\ps{1}),
    \end{align*}
    which are left and right $C$-comodule map properties of the unit map $\eta$, respectively.
     The relations iii) and iv) are proved in \cite[page 8]{bal}.
\end{proof}
 One notes  that the relations i) and ii) in the previous  lemma are dual to the relations $\ve(t(r))=\ve(s(r))=r$  and also the relations iii) and iv) are dual to the relations  $\Delta(s(r))=s(r)\ot 1$ and $\Delta(t(r))= 1\ot t(r)$ for left $\times_R$-bialgebroids.

\begin{definition}
A left bicoalgebroid $\Kc$ over  the coalgebra $C$ is said to be a left $\times_C$-Hopf coalgebra if the following Galois map
\begin{align}\label{nu}
\nu: \mathcal{K} \Co_C \mathcal{K}\longrightarrow \mathcal{K}\Co_{C_{{\rm cop}}} \mathcal{K}, \quad k \ot k'\longmapsto k k'\ps{1}\ot k'\ps{2},
\end{align}
is bijective. Here in the codomain of $\nu$, the $C_{{\rm cop}}$-comodule structures are given by right and left coactions defined by $\beta$,  \ie

\begin{equation}
k\longmapsto \beta(k\ps{1})\ot k\ps{2}, \quad  k\longmapsto k^{(1)}\ot \beta(k^{(2)}).
\end{equation}

In the domain of $\nu$, the $C$-comodule structures are given by the original  right and left coactions  defined by $\alpha$ and $\beta$, \ie
\begin{equation}
k\longmapsto \alpha(k\ps{1})\ot k\ps{2}, \quad k\longmapsto k\ps{2} \ot \beta(k\ps{1}).
\end{equation}
\end{definition}
To show  that $\nu$ is well-defined,  we   prove that
$k k'\ps{1}\ot k'\ps{2}\in K\Co_{C_{{\rm cop}}} K.$  Indeed,
\begin{align*}
&(k k'^{(1)})^{(1)} \ot \beta((k k'^{(1)})^{(2)}) \ot k'^{(2)}=k^{(1)} k'^{(1)} \ot \beta(k^{(2)}\ot k'^{(2)}) \ot k'^{(3)}\\
&=k^{(1)}\varepsilon(k^{(2)}) k'^{(1)}\ot \beta(k'^{(2)})\ot k'^{(3)}=k k'^{(1)}\ot \beta(k'^{(2)(1)})\ot k'^{(2)(2)}.
\end{align*}

Here  Lemma \eqref{leftbi}{(iv)}  is used in the second equality.

 We use the following summation notation  for the image of $\nu^{-1}$,
\begin{equation}
\nu^{-1}(k\ot k')= \nu^{-}(k,k')\ot \nu^{+}(k, k').
\end{equation}
When there is no confusion we set $\nu^{-1}=\nu^{-}\ot \nu^{+}$.

 \begin{lemma}\label{property} Let $C$ be a coalgebra and $\mathcal{K}$ a left $\times_C$-Hopf coalgebra. Then the following properties hold for $\nu$ and $\nu^{-1}$.\\
\begin{enumerate}
\item[i)] $ \nu^{-} \nu^+\ps{1}\ot \nu^+\ps{2}= \Id_{\mathcal{K}\Co_{C_{{\rm cop}}} \mathcal{K}}.$
\item[ii)]  $\nu^{-}(k k'\ps{1}, k'\ps{2}) \ot \nu^{+}(k k'\ps{1}, k'\ps{2})= k\ot k'.$
 \item[iii)]  $\ve(\nu^{-}(k\ot k')) \ve(\nu^{+}(k\ot k'))= \ve(k)\ve(k').$
 \item[iv)]  $\ve(\nu^-(k\ot k'))\nu^+(k\ot k')=\ve(k)k'.$
\item[v)]  $\nu^-(k\ot k') \nu^+(k\ot k')= \ve(k')k.$
\end{enumerate}

 \end{lemma}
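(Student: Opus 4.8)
The plan is to recognize that parts (i) and (ii) are merely the two halves of the statement ``$\nu^{-1}$ is a two-sided inverse of $\nu$,'' rewritten in the componentwise notation $\nu^{-1}=\nu^{-}\ot\nu^{+}$, and then to deduce (iii)--(v) from (i) purely by collapsing tensor legs with the counit. So the only genuinely structural input is the bijectivity of $\nu$ assumed in the definition of a left $\times_C$-Hopf coalgebra; everything else is formal manipulation using the counit of $\Kc$ and the bicoalgebroid axiom \eqref{var}.

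First I would unwind (i). Since $\nu(k\ot k')=k k'\ps{1}\ot k'\ps{2}$, for $k\ot k'\in \Kc\Co_{C_{{\rm cop}}}\Kc$ we have $\nu(\nu^{-1}(k\ot k'))=\nu(\nu^{-}\ot\nu^{+})=\nu^{-}\nu^{+}\ps{1}\ot \nu^{+}\ps{2}$, and because $\nu\circ\nu^{-1}=\Id$ this is exactly (i). Symmetrically, for $k\ot k'\in\Kc\Co_C\Kc$ one computes $\nu^{-1}(\nu(k\ot k'))=\nu^{-}(k k'\ps{1},k'\ps{2})\ot\nu^{+}(k k'\ps{1},k'\ps{2})$, and $\nu^{-1}\circ\nu=\Id$ gives (ii). The one point to verify here is that the products written down are legitimate: the element $\nu^{-}\ot\nu^{+}$ lies in $\Kc\Co_C\Kc$, the domain of $\mu$, and $\nu^{-}\ot\nu^{+}\ps{1}\ot\nu^{+}\ps{2}\in\Kc\Co_C(\Kc\ot\Kc)$ exactly as in the well-definedness argument for $\nu$ based on \eqref{mmm3}, so that $\mu(\nu^{-}\ot\nu^{+}\ps{1})$ makes sense.

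With (i) established, the remaining three identities come from applying the counit. Applying $\Id_\Kc\ot\ve$ to (i) and using $\nu^{+}\ps{1}\ve(\nu^{+}\ps{2})=\nu^{+}$ collapses the right leg and yields $\nu^{-}\nu^{+}=\ve(k')k$, which is (v). Applying instead $\ve\ot\Id_\Kc$ to (i), rewriting $\ve(\nu^{-}\nu^{+}\ps{1})=\ve(\nu^{-})\ve(\nu^{+}\ps{1})$ by the bicoalgebroid axiom \eqref{var} (that is, $\ve\circ\mu=\ve\ot\ve$), and then collapsing via $\ve(\nu^{+}\ps{1})\nu^{+}\ps{2}=\nu^{+}$, gives $\ve(\nu^{-})\nu^{+}=\ve(k)k'$, which is (iv). Finally (iii) follows by applying $\ve$ to both sides of (v) and invoking \eqref{var} once more, or equivalently by applying $\ve\ot\ve$ to (i).

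I do not expect a real obstacle, since the statement is entirely formal once bijectivity is in hand. The only thing demanding care is the bookkeeping: at each partial application of $\ve$ I must ensure the expression genuinely lies in the relevant cotensor product so that $\mu$ is defined, and I should pass $\ve$ through $\mu$ only by citing \eqref{var} rather than by trying to manipulate $\mu$ directly. All other steps are the counit axiom of the coalgebra $\Kc$.
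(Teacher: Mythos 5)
Your proposal is correct and follows essentially the same route as the paper: parts (i) and (ii) are exactly $\nu\circ\nu^{-1}=\Id$ and $\nu^{-1}\circ\nu=\Id$ written out in components, and (iii), (iv), (v) are obtained by applying $\ve\ot\ve$, $\ve\ot\Id$, and $\Id\ot\ve$ respectively to (i), using the axiom \eqref{var} to pass $\ve$ through the multiplication. Your extra care about the cotensor-product bookkeeping (via \eqref{mmm3}) is a sound addition but does not change the argument.
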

 \begin{proof}
   The relation i)  is equivalent to $\nu \nu^{-1}= \Id$. The relation ii) is equivalent to $\nu^{-1}\nu= \Id$. The relation iii) is proved by applying $\ve \ot \ve$ to the both sides of i) followed by \eqref{var}. The relation iv)  is obtained by applying $\ve\ot \Id$ on both hand sides of i). The relation v)  is shown  by applying $\Id\ot \ve$ on both hand sides of i).
 \end{proof}

One notes that for a left $\times_C$-Hopf coalgebra $\mathcal{K}$, the maps $\nu$ and hence $\nu^{-1}$ are both right $\mathcal{K}$-comodule maps where the right $\mathcal{K}$-comodule structures of $\mathcal{K}\Co_C \mathcal{K}$ and $\mathcal{K}\Co_{C_{\rm cop}} \mathcal{K}$ are given by $k\ot k'\longmapsto k\ot k'\ps{1}\ot k'\ps{2}$. The right $\mathcal{K}$-comodule map property of $\nu^{-1}$ is equivalent to
\begin{equation*}
  \nu_-(k\ot k')\ot \nu_+(k\ot k')\ps{1}\ot \nu_+(k\ot k')\ps{2}= \nu_-(k\ot k'\ps{1})\ot \nu_+(k\ot k'\ps{1})\ot k'\ps{2}.
\end{equation*}


\begin{definition}
   A right bicoalgebroid $(\mathcal{B}, \D, \ve,  \mu, \eta, \alpha, \beta, C )$
consists of coalgebras $\mathcal{B}$ and $C$
 with  coalgebra maps $\alpha: \mathcal{B} \to C$ and $\beta: \mathcal{B} \to C_{{\rm cop}}$,
such that their images  cocommute, \ie
$\alpha (b^{(1)}) \ot \beta(b^{(2)}) = \alpha (b^{(2)}) \ot \beta(b^{(1)})$. These maps furnish $\mathcal{B}$ with a $C$-bicomodule
structure, via left and right  $C$-coactions
\begin{equation}\label{rightc}
 \blacktriangledown_{L}(b)= \beta(b\ps{2})\ot b\ps{1}, \quad \blacktriangledown_{R}(b)= b\ps{1}\ot \alpha(b\ps{2})
\end{equation}
 $C$-bicomodule maps $\mu_{\mathcal{B}}: \mathcal{B} \square_C \mathcal{B} \longrightarrow \mathcal{B}$ and $\eta_{\mathcal{B}}: C  \longrightarrow \mathcal{B}$ making $\mathcal{B}$ an algebra in $  ^{C}M^C$, satisfying the following properties:
\smallskip
\begin{align}
\label{tak2}
& i)\quad \sum_i \mu (b_i \ot b'_{i}\ps{2}) \ot \beta(b'_{i}\ps{1}) = \mu (b_i\ps{2} \ot b'_i) \ot \alpha(b_i\ps{1})\\
\label{brzmil23}
& ii) \quad\Delta \circ \mu (\sum_i b_i \ot b'_i) = \sum_i \mu (b_i\ps{1} \ot
b'_i\ps{1}) \ot \mu (b_i\ps{2} \ot b'_i\ps{2})\\
\label{var2}
& iii) \quad\varepsilon (b) \varepsilon (b') = \varepsilon \circ \mu (b \ot b')\\
& iv)\quad \mu \circ (Id \Co \eta) \circ  ^R\blacktriangledown_{C}  = \mathcal{B} = \mu \circ ( \eta \Co Id)
\circ  ^L\blacktriangledown_{C}\\
\label{etaalphaeta2}
&v)\quad \Delta (\eta (c)) = \eta (\alpha
(\eta(c)^{(1)})) \ot \eta(c)\ps{2}  =\eta (\beta
(\eta(c)^{(1)})) \ot \eta(c)\ps{2}\\
\label{epsil}
& vi) \quad \varepsilon (\eta (c)) = \varepsilon (c).
\end{align}
\end{definition}
For simplicity we use the notation $\mu(b\ot b')= bb'$ for all $b, b' \in \mathcal{B}$. One notes that the relation \eqref{tak2} makes sense because if $b\ot b'\in \mathcal{B}\Co_C \mathcal{B}$, then
\begin{equation}\label{nnn1}
b\ot \beta (b'\ps{2})\ot b'\ps{1}= b\ps{1} \ot \alpha(b\ps{2})\ot b'.
\end{equation}
By applying $\Id \ot \Id \ot \ve$ on the both sides of \eqref{nnn1} we get
\begin{equation}\label{nnn2}
b\ot \beta (b'\ps{3})\ot b'\ps{1} \ot b'\ps{2}= b\ps{1} \ot \alpha(b\ps{2})\ot b'\ps{1}\ot b'\ps{2},
\end{equation}
 which is equivalent to  $b\ot b'\ps{2}\ot b'\ps{1}\in \mathcal{B}\Co_C \mathcal{B} \ot \mathcal{B}.$ So the left hand side of \eqref{tak2} is well-defined. To show that the right hand side of \eqref{tak2} is well-defined, one applies $\Delta\ot \Id \ot \Id$ on  the both sides of \eqref{nnn1} to obtain
$$b\ps{1}\ot b\ps{2}\ot \beta(b'\ps{2})\ot b'\ps{1}=b\ps{1}\ot b\ps{2} \ot \alpha(b\ps{3})\ot b'.$$

One also notes that the  unit map $\eta: C\longrightarrow \mathcal{B}$ is a right $C$-comodule map, \ie
 \begin{equation}\label{bos}
   \eta(c\ps{1})\ot c\ps{2}= \eta(c)\ps{1}\ot \alpha(\eta(c)\ps{2}),
 \end{equation}
 where $C$ is a right $C$-comodule by $\Delta_C$ and the right $C$-comodule structure of $\mathcal{B}$ is given in \eqref{rightc}.

\begin{lemma}\label{rightbi}
 Let  $C$ be a coalgebra and $ \mathcal{B}$ be a right $\times_C$-bicoalgebroid. The following properties hold for all $b,b'\in \mathcal{B}$ and $c\in C $.
\begin{enumerate}
\item[i)] $\alpha(\eta(c))=c.$
\item[ ii)] $ \beta(\eta(c))=c.$
 \item[iii)] $\alpha(b b'))=\ve(b)\alpha(b'). $
\item[ iv)] $\beta(b b')= \beta(b)\ve(b').$
\end{enumerate}
\end{lemma}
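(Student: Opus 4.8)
The plan is to prove Lemma \ref{rightbi} by exact analogy with Lemma \ref{leftbi}, using the dual/mirror symmetry between the left and right bicoalgebroid axioms. The key observation is that the right bicoalgebroid axioms \eqref{tak2}--\eqref{epsil} are obtained from the left ones \eqref{tak}--\eqref{veeta} by systematically reversing the order of tensor factors and swapping the roles of $\alpha$ and $\beta$, so the same formal manipulations apply.

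First I would prove parts i) and ii), namely $\alpha(\eta(b))=b$ and $\beta(\eta(b))=b$. The plan is to apply $\varepsilon\ot\varepsilon$ to both sides of the right-handed counit-for-$\eta$ axiom \eqref{etaalphaeta2}. Writing out $\Delta(\eta(c)) = \eta(\alpha(\eta(c)\ps{1}))\ot \eta(c)\ps{2}$ and applying $\varepsilon\ot\varepsilon$, the left side collapses by coassociativity to $\varepsilon(\eta(c))$, which equals $\varepsilon(c)$ by \eqref{epsil}. On the right side, the factor $\varepsilon(\eta(c)\ps{2})$ collapses the second tensor slot, leaving $\varepsilon(\eta(\alpha(\eta(c))))$; using \eqref{epsil} again this is $\varepsilon(\alpha(\eta(c)))$, and since $\alpha$ is a coalgebra map this equals $\varepsilon(\eta(c))=\varepsilon(c)$. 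This shows the identity holds after applying $\varepsilon$, but to recover the actual identity $\alpha(\eta(b))=b$ one reads \eqref{etaalphaeta2} more carefully: applying $\Id\ot\varepsilon$ to the first equality in \eqref{etaalphaeta2} gives $\eta(c)=\eta(\alpha(\eta(c)))$ after using \eqref{epsil}, whence applying $\varepsilon$ and the coalgebra-map property pins down $\alpha(\eta(c))=c$; the analogous computation with the second equality in \eqref{etaalphaeta2} yields $\beta(\eta(c))=c$. These are precisely the duals of $\varepsilon(s(r))=\varepsilon(t(r))=r$.

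For parts iii) and iv), $\alpha(bb')=\varepsilon(b)\alpha(b')$ and $\beta(bb')=\beta(b)\varepsilon(b')$, the plan is to invoke the reference already cited for the left-handed case: these are the mirror statements of Lemma \ref{leftbi}(iii)--(iv), proved in \cite[page 8]{bal}. Alternatively, a direct proof runs as follows: since $\alpha$ is a coalgebra map, $\varepsilon\circ\alpha=\varepsilon$, so $\alpha(bb') = \varepsilon(\alpha(bb')\ps{1})\,\alpha(bb')\ps{2}$ and one uses the comultiplicativity axiom \eqref{brzmil23} together with the fact that $\alpha$ measures the $C$-comodule structure via the right coaction in \eqref{rightc}. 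The cleanest route is to apply $\alpha$ to the multiplication and use that $\mu$ is a right $C$-comodule map: the right coaction reads off $\alpha$ on the second leg, forcing $\alpha(bb')$ to depend on $b'$ through $\alpha$ and on $b$ only through $\varepsilon$, which gives iii); the symmetric argument with the left coaction (which reads off $\beta$ on the first leg) gives iv).

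The main obstacle I anticipate is bookkeeping rather than conceptual difficulty: one must be scrupulous about which tensor slot each coaction acts on, since in the right bicoalgebroid the left coaction \eqref{rightc} uses $\beta$ on the \emph{second} Sweedler factor $b\ps{2}$ while the right coaction uses $\alpha$ on $b\ps{2}$, the reverse of the left-handed conventions. As long as one tracks these indices faithfully, parts i)--ii) follow from \eqref{etaalphaeta2} and \eqref{epsil} by the counit argument, and parts iii)--iv) follow either by citation to \cite{bal} or by the comodule-map property of $\mu$; no genuinely new idea beyond the left-handed case is required.
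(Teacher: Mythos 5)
Your parts iii) and iv) are in order: deferring to \cite[page 8]{bal} through the left/right mirror is exactly what the paper does (it gives Lemma \ref{rightbi} no proof at all, leaving it as the mirror of Lemma \ref{leftbi}), and your direct alternative also closes, with one correction of bookkeeping: the ingredient that finishes it is \eqref{var2}, not \eqref{brzmil23}. Explicitly, right $C$-colinearity of $\mu$ gives $(bb')\ps{1}\ot\alpha((bb')\ps{2})=bb'\ps{1}\ot\alpha(b'\ps{2})$, and applying $\ve\ot\Id$ yields $\alpha(bb')=\ve(bb'\ps{1})\alpha(b'\ps{2})=\ve(b)\ve(b'\ps{1})\alpha(b'\ps{2})=\ve(b)\alpha(b')$, where the middle step is \eqref{var2}; the left-coaction version gives iv).

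The genuine gap is in parts i) and ii). From \eqref{etaalphaeta2} and \eqref{epsil} you correctly derive $\eta(c)=\eta(\alpha(\eta(c)))$, but the concluding move --- ``applying $\ve$ and the coalgebra-map property pins down $\alpha(\eta(c))=c$'' --- is a non sequitur: applying $\ve$ to both sides gives $\ve(c)=\ve(\alpha(\eta(c)))=\ve(\eta(c))=\ve(c)$, a tautology. What you have actually proved is that $P:=\alpha\circ\eta$ satisfies $\eta\circ P=\eta$, hence $P\circ P=P$; an idempotent need not be the identity, and $\eta$ is not assumed injective, so $\alpha(\eta(c))=c$ does not follow. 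Indeed, no argument that uses only \eqref{etaalphaeta2} and \eqref{epsil} can succeed, since every application of a counit collapses the statement to an $\ve$-level identity. The missing ingredient is the axiom that $\eta$ is a map of $C$-bicomodules. Its right-coaction half is precisely the paper's equation \eqref{bos}, $\eta(c\ps{1})\ot c\ps{2}=\eta(c)\ps{1}\ot\alpha(\eta(c)\ps{2})$: apply $\ve\ot\Id$ and use \eqref{epsil}, so the left side becomes $\ve(\eta(c\ps{1}))\,c\ps{2}=\ve(c\ps{1})c\ps{2}=c$ while the right side becomes $\alpha\bigl(\ve(\eta(c)\ps{1})\eta(c)\ps{2}\bigr)=\alpha(\eta(c))$, which is i); the left-coaction half of the colinearity of $\eta$ gives ii) in the same way. (Two side remarks: the statement should of course read $c\in C$ rather than $b\in\mathcal{B}$, as you implicitly assumed; and the paper's own template, the proof of Lemma \ref{leftbi} by applying $\ve\ot\ve$ to \eqref{etaalphaeta} and using \eqref{veeta}, has the same defect --- it only yields $\ve(\alpha(\eta(c)))=\ve(c)$ --- so colinearity of $\eta$ is what must be invoked there as well.)
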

\begin{definition}
Let $C$ be a coalgebra. A right $\times_C$-bicoalgebroid $\Bc$  is said to be a right $\times_C$-Hopf coalgebra provided that the map
\begin{align}\label{nu2}
\nu: \mathcal{B} \Co_C \mathcal{B}\longrightarrow \mathcal{B}\Co_{C_{{\rm cop}}} \mathcal{B}, \quad b \ot b'\longmapsto b\ps{1}\ot b\ps{2} b',
\end{align}
is bijective. In the codomain of this map in \eqref{nu2}, $C_{{\rm cop}}$-comodule structures are given by the right and left coaction via $\beta$, \ie

\begin{equation}
b\longmapsto \beta(b\ps{1})\ot b\ps{2}, \quad  b\longmapsto b^{(1)}\ot \beta(b^{(2)}).
\end{equation}

In the domain of the map in \eqref{nu2}, $C$-comodule structures are given by the original  right and left coactions by $\alpha$ and $\beta$, \ie
\begin{equation}
b\longmapsto \beta(b\ps{2})\ot b\ps{1}, \quad b\longmapsto b\ps{1} \ot \alpha(b\ps{2}).
\end{equation}
\end{definition}
The map $\nu$ introduced in \eqref{nu2} is well-defined  by \eqref{nnn2} and the fact that one has
$$b\ps{1}\ot b\ps{2} b' \in \mathcal{B}\Co_{C_{{\rm cop}}} \mathcal{B}.$$ In fact,
\begin{align*}
&b\ps{1}\ot \beta(b\ps{2}\ps{1} b'\ps{1})\ot b\ps{2}\ps{2} b'\ps{2}=b\ps{1}\ot \beta(b\ps{2} b'\ps{1})\ot b\ps{3}  b'\ps{2}=\\
&b\ps{1}\ot \beta(b\ps{2})\ot b\ps{3}  \ve(b'\ps{1})b'\ps{2}=b\ps{1}\ot \beta(b\ps{2})\ot b\ps{3}  b'.\\
\end{align*}

We use Lemma \eqref{rightbi}{(iv)} in the second equality. We denote the inverse map $\nu^{-1}$ by the following summation notation,
\begin{equation}
\nu^{-1}(b\Co_{C_{{\rm cop}}} b')= \nu^{-}(b,b')\ot \nu^{+}(b, b').
\end{equation}
 If there is no confusion we use $\nu^{-1}:=\nu^{-}\ot \nu^{+}$. Similar to  Lemma \eqref{property}, one can prove the following lemma.

 \begin{lemma}\label{property2} Let $C$ be a coalgebra and $\mathcal{B}$ be a right $\times_C$-Hopf coalgebra. Then the  following properties hold.
\begin{itemize}
   \item[i)] $\nu^-\ps{1}\ot \nu^-\ps{2} \nu^+= \Id_{\mathcal{B}\Co_{C_{{\rm cop}}}\mathcal{B}}.$

   \item[ii)] $ \nu^-(b\ps{1}, b\ps{2} b')\ot \nu^+(b\ps{1}, b\ps{2} b')= b\ot b'.$

  \item[ iii)] $\ve(\nu^{-}(b\ot b')) \ve(\nu^{+}(b\ot b'))= \ve(b)\ve(b').$

  \item[ iv)] $ \ve(\nu^+(b\ot b'))\nu^-(b\ot b')=\ve(b')b.$

   \item[v)] $ \nu^-(b\ot b') \nu^+(b\ot b')= \ve(b)b'.$
\end{itemize}
 \end{lemma}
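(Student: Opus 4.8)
The plan is to follow exactly the template of the proof of Lemma \ref{property}, since here $\nu$ and $\nu^{-1}$ play the same structural role as in the left-handed case; the only difference is which tensor leg carries the multiplication, because the Galois map \eqref{nu2} sends $b\ot b'$ to $b\ps{1}\ot b\ps{2}b'$ rather than to $kk'\ps{1}\ot k'\ps{2}$. The first observation is that relations i) and ii) are nothing but the two halves of the assertion that $\nu^{-1}$ is a two-sided inverse of $\nu$. Writing $\nu^{-1}(b\ot b')=\nu^-\ot\nu^+$ and applying $\nu$ produces $\nu^-\ps{1}\ot\nu^-\ps{2}\nu^+$, so i) is precisely $\nu\circ\nu^{-1}=\Id$. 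Symmetrically, feeding $\nu(b\ot b')=b\ps{1}\ot b\ps{2}b'$ into $\nu^{-1}$ gives the left-hand side of ii), so ii) is $\nu^{-1}\circ\nu=\Id$. Both therefore hold by the very definition of the inverse, with no further computation, and the product $\nu^-\nu^+$ makes sense because $\nu^-\ot\nu^+$ lands in $\mathcal{B}\Co_C\mathcal{B}$.

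The remaining three identities I would extract from i) by collapsing one or both tensor legs with the counit. For iii), apply $\ve\ot\ve$ to both sides of i): the right-hand side gives $\ve(b)\ve(b')$, while on the left $\ve(\nu^-\ps{2}\nu^+)$ factors as $\ve(\nu^-\ps{2})\ve(\nu^+)$ by \eqref{var2}, after which the counit axiom collapses $\ve(\nu^-\ps{1})\ve(\nu^-\ps{2})$ to $\ve(\nu^-)$, yielding iii). For iv), apply $\Id\ot\ve$ to i); using \eqref{var2} on the second leg and then the counit axiom, the left side becomes $\ve(\nu^+)\nu^-$ and the right side becomes $\ve(b')b$, which is iv). For v), apply $\ve\ot\Id$ to i); the counit axiom kills $\ve(\nu^-\ps{1})$ on the first leg, leaving $\nu^-\nu^+$ on the left and $\ve(b)b'$ on the right, which is v).

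The one point demanding care — and the only place where a naive transcription of Lemma \ref{property} would go wrong — is keeping track of which leg carries the product. In the right-handed $\nu$ the multiplication sits on the $\nu^-$ side of i), whereas in the left-handed case it sat on the $\nu^+$ side; consequently the roles of ``apply $\Id\ot\ve$'' and ``apply $\ve\ot\Id$'' are interchanged relative to Lemma \ref{property}, and it is exactly this swap that produces $\ve(b')b$ in iv) and $\ve(b)b'$ in v) (rather than the reverse). Apart from this bookkeeping the argument is the mirror image of the left-handed one, invoking only the counit property \eqref{var2} together with the coassociativity already encoded in the Sweedler indices.
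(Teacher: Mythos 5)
Your proof is correct and is essentially the paper's own argument: the paper proves Lemma \ref{property2} simply by declaring it ``similar to Lemma \ref{property},'' and your proposal is exactly that mirrored proof, with i) and ii) read off from $\nu\circ\nu^{-1}=\Id$ and $\nu^{-1}\circ\nu=\Id$, and iii)--v) obtained by applying $\ve\ot\ve$, $\Id\ot\ve$, and $\ve\ot\Id$ to i) using \eqref{var2}. Your observation that the multiplication now sits on the $\nu^-$ leg, so the roles of $\Id\ot\ve$ and $\ve\ot\Id$ are interchanged relative to the left-handed case, is precisely the bookkeeping the paper leaves implicit.
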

 One notes that for any right $\times$-Hopf coalgebra $\mathcal{B}$, the map $\nu$ and therefore $\nu^{-1}$ are both left $\mathcal{B}$-comodule maps where the left $\mathcal{B}$-comodule structures of $\mathcal{B}\Co_C \mathcal{B}$ and $\mathcal{B}\Co_{C_{\rm cop}} \mathcal{B}$ are given by $b\ot b'\longmapsto b\ps{1}\ot b\ps{2}\ot b'$. The left $\mathcal{B}$-comodule property  of $\nu^{-1}$ is equivalent to
 \begin{equation}\label{b-comodule-nu}
   \nu_-(b\ot b')\ps{1}\ot \nu_-(b\ot b')\ps{2} \ot \nu_+(b\ot b')= b\ps{1}\ot \nu_-(b\ps{2}\ot b')\ot \nu_+(b\ps{2}\ot b').
 \end{equation}

 \subsection{Examples}
 \begin{example}\label{example-a}{\rm
   The $\times$-Hopf coalgebras generalize  Hopf algebras. We recall that a left Hopf algebra  is a bialgebra $B$ endowed with a left antipode $S$, \ie $S:B\longrightarrow B$ where $S(b\ps{1})b\ps{2}=b$, for all $b\in B$ \cite{lefthopf}. Similarly a right Hopf algebra is a bialgebra endowed with a right antipode. Obviously any Hopf algebra is both a left and a right Hopf algebra. If $H$ is a bialgebra,  it is a left $\times_{\mathbb{C}}$-Hopf coalgebra, if and only if
   \begin{equation}
     \nu(h\ot h')= hh'\ps{1}\ot h'\ps{2}, \qquad \nu^{-1}(h\ot h')= hS(h'\ps{1})\ot h'\ps{2}.
   \end{equation}
  The relation $ \nu^{-1}\nu=id$ implies that $H$ is a  right Hopf algebra and $\nu \nu^{-1}=id$ is equivalent to the fact that $H$ is a  left Hopf algebra. Therefore $H$ should be a Hopf algebra.   Also $H$ is a right $\times_{\mathbb{C}}$-Hopf coalgebra if and only if
  \begin{equation}
    \nu(b\ot b')= b\ps{1}\ot b\ps{2}b', \qquad \nu^{-1}(b\ot b')= b\ps{1}\ot S(b\ps{2})b'.
  \end{equation}}

 \end{example}
 \begin{example}\label{example}{\rm
 The simplest example of a $\times$-Hopf coalgebra which is not a Hopf algebra comes as follows. The co-enveloping coalgebra $C^e= C\ot C_{\rm cop}$ is a left $\times_C$-Hopf coalgebra where the source and target maps are given by
$$\alpha: C\ot C_{{\rm cop}}\longrightarrow C, \quad c\ot c'\longmapsto c\ve (c'); \quad \beta: C\ot C_{\rm {\rm cop}}\longrightarrow C_{{\rm cop}}, \quad c\ot c'\longmapsto \ve(c) c',$$ multiplication by
$$C\ot C_{{\rm cop}}\Co C\ot C_{{\rm cop}}\longrightarrow C\ot C_{{\rm cop}}, \quad c\ot c'\Co d\ot d'\longmapsto \ve(c') \ve(d) c \ot d' ,$$  unit map by
$$\eta: C\longrightarrow C\ot C_{{\rm cop}}, \quad c\longmapsto c\ps{1} \ot c\ps{2},$$ and
$$\nu(c\ot c' \Co_C d\ot d')=\ve(c')  c\ot d'\ps{2}\Co{ C_{{\rm cop}}} d\ot d'\ps{1},$$
$$\nu^{-1}(c\ot c' \Co_{C_{{\rm cop}}}d\ot d')= \ve(c')c\ot d\ps{1}\Co_C d\ps{2}\ot d'.$$}
\end{example}

\begin{example}{\rm The co-enveloping coalgebra $C^{\rm e}= C\ot C_{{\rm cop}}$ is a right $\times_C$-Hopf coalgebra where the source and target maps are given by
\begin{align*}
&\alpha: C\ot C_{{\rm cop}}\longrightarrow C, \quad c\ot c'\longmapsto c\ve (c'), \\
& \beta: C\ot C_{{\rm cop}}\longrightarrow C_{{\rm cop}}, \quad c\ot c'\longmapsto \ve(c) c',
\end{align*}

 multiplication, unite, and inverse maps  by
 \begin{align*}
&C\ot C_{{\rm cop}}\Co C\ot C_{{\rm cop}}\longrightarrow C\ot C_{{\rm cop}}, \quad c\ot c'\Co d\ot d'\longmapsto \ve(c) \ve(d')d\ot c' ,\\
&\eta: C\longrightarrow C\ot C_{{\rm cop}}, \quad c\longmapsto c\ps{2} \ot c\ps{1},\\
&\nu(c\ot c'\Co_C d\ot d')=\ve(c_4) c_1\ot c_2\ps{2}\Co_{C_{{\rm cop}}} c_2\ps{1}\ot c_3,\\
&\nu^{-1}(c\ot c'\Co_{C_{{\rm cop}}} d\ot d')=\ve(c_4) c_1\ps{1}\ot c_2\Co_C c_3 \ot c_1\ps{2}.
\end{align*}

}
\end{example}

The following example introduce a subcategory of $\times$-Hopf coalgebras. They are dual notion of  Hopf algebroids defined in \cite{b1}, hence we call them Hopf coalgebroids.
\begin{example}{\rm
  Let $C$ and $D$  be two coalgebras. A Hopf coalgebroid over the base coalgebras $C$ and $D$ is a triple $(\mathcal{K}, \mathcal{B}, S)$. Here $(\mathcal{K}, \Delta_{\mathcal{K}}, \alpha_{\mathcal{K}}, \beta_{\mathcal{K}}, \eta_{\mathcal{K}}, \mu_{\mathcal{K}})$ is a left $\times_C$-Hopf coalgebra and $(\mathcal{B}, \Delta_{\mathcal{B}}, \alpha_{\mathcal{B}}, \beta_{\mathcal{B}}, \eta_{\mathcal{B}}, \mu_{\mathcal{B}})$ is a right $\times_D$-Hopf coalgebra such that  as a coalgebra we have $\mathcal{K}=\mathcal{B}=H$ and there exists a $\mathbb{C}$-linear map $S: H\longrightarrow H$, called antipode. These structures are subject to the following axioms.
  \begin{itemize}
 \item[i)] $\beta_{\mathcal{B}} \circ \eta_{\mathcal{K}} \circ \alpha_{\mathcal{K}}=\beta_{\mathcal{B}},\quad \alpha_{\mathcal{B}} \circ \eta_{\mathcal{K}} \circ \beta_{\mathcal{K}}= \alpha_{\mathcal{B}}, \\
  \beta_{\mathcal{K}}\circ \eta_{\mathcal{B}} \circ \alpha_{\mathcal{B}}=\beta_{\mathcal{K}}, \quad \alpha_{\mathcal{K}}\circ \eta_{\mathcal{B}}\circ \beta_{\mathcal{B}}= \alpha_{\mathcal{K}}$.
 \item[ii)]$\mu_{\mathcal{B}}\circ (\mu_{\mathcal{K}}\Co_D \Id_H)= \mu_{\mathcal{K}}\circ (\Id_H\Co_C \mu_{\mathcal{B}}),\\
 \mu_{\mathcal{K}}\circ(\mu_{\mathcal{B}}\Co_C\Id_H)= \mu_{\mathcal{B}}\circ (\Id_H\Co_D \mu_{\mathcal{K}})$ .
 \item[iii)] $\beta_{\mathcal{K}}(S(h)\ps{1})\ot S(h)\ps{2}\ot \beta_{\mathcal{B}}(S(h)\ps{3})= \alpha_{\mathcal{K}}(h\ps{3})\ot S(h\ps{2})\ot \alpha_{\mathcal{B}}(h\ps{1}).$
 \item[iv)]$\mu_{\mathcal{K}}\circ (S\Co_C \Id_H)\circ \Delta_{\mathcal{K}}= \eta_{\mathcal{B}}\circ \alpha_{\mathcal{B}}, \quad \mu_{\mathcal{B}}\circ(\Id_H\Co_D S)\circ \Delta_{\mathcal{B}}= \eta_{\mathcal{K}}\circ \alpha_{\mathcal{K}}.$
 \end{itemize}
}\end{example}

\begin{example}\label{groupoid1}{\rm
Let $\Gc=(\Gc^1, \Gc^0)$ be a groupoid and $\Bc:=\Cb\Gc^1$ and  $C:=\Cb\Gc^0$ be the groupoid coalgebra as defined in Section \ref{section-1} with trivial coalgebra structure. Then via the source and target  maps $s,t:\Bc\ra C$ defined by $\a(g):=s(g)$, and $\b(g):=t(g)$,  where $s$ and $t$ are source and target maps  of $\Gc$, it is easy to verify that  $\Bc$ is a right $\times_C$-Hopf coalgebras where  the antipode is defined by
$$\nu^{-1}(g\ot h)= g\ot g^{-1}h.$$
}\end{example}

\begin{example}{\rm
It is shown in \cite{bm} that any weak  Hopf algebras  defines a $\times$-Hopf coalgebra as follows. First let us recall  that a weak bialgebra  $B$ is an unital algebra and counital coalgebra with the following compatibility conditions between the algebra and coalgebra  structures,
  \begin{align*}
   & (\Delta(1_B)\ot 1_B)(1_B\ot \Delta(1_B))=(\Delta\ot \Id_B)\circ \Delta(1_B)=\\
   &(1_B\ot \Delta(1_B))(\Delta(1_B)\ot 1_B),\\
    &\ve(b 1\ps{1})\ve(1\ps{2} b')= \ve(bb')= \ve(b 1\ps{2})\ve(1\ps{1} b').
  \end{align*}

  The first condition generalizes the  axiom of unitality of coproduct $\Delta$ and the third one generalizes the algebra map property of counit of   bialgebras. In fact its coproduct is not an unital map, \ie $\Delta(1_B)= 1\ps{1}\ot 1\ps{2}\neq 1_B\ot 1_B$. A weak Hopf algebra \cite{b1} is the weak bialgebra $B$ equipped with an antipode  $S: B\longrightarrow B$, satisfying the following conditions
  \begin{align*}
    &b\ps{1}S(b\ps{2})= \ve(1_B\ps{1}b)1_B\ps{2},\quad S(b\ps{1})b\ps{2}=1_B\ps{1}\ve(b 1_B\ps{2}), \\
     &S(b\ps{1})b\ps{2}S(b\ps{3})=S(b).
  \end{align*}
   Every weak Hopf algebra $B$ is a $\times_C$-Hopf coalgebra. Here $C= B/ \ker\xi$ where $\xi: B\longrightarrow B$ is given by $\xi(b)=\ve(1\ps{1}b)1\ps{2}.$ The source and target maps are given by
   \begin{align*}
     \alpha(b)=\pi(b), \qquad \beta(b)=\pi(S^{-1}(b)),
   \end{align*}
   where $\pi: B\longrightarrow C$ is the canonical projection map. The multiplication $\mu$ is given by the original multiplication  $B$ as an algebra. The unit map of the $\times_C$-Hopf coalgebra is given by $\eta= \xi$. Also we have $\nu(b\ot b')= bb'\ps{1}\ot b'\ps{2}$ and $\nu^{-1}(b\ot b')= bS(b'\ps{1})\ot b'\ps{2}.$
}\end{example}

\subsection{SAYD modules over $\times$-Hopf coalgebras}\label{SS-2-2}

In this subsection, we define modules, comodules,  and stable anti Yetter-Drienfeld (SAYD) modules over $\times$-Hopf coalgebras.
Then we present  some nontrivial examples of  stable anti Yetter-Drienfeld  modules over  the enveloping $\times_C$-Hopf coalgebra $C\ot C_{{\rm cop}}$. We also show that for a cyclic groupoid $(\Gc,\t)$ the coefficients $^\t C$ defined in Section \ref{section-1} defines  a SAYD module over the groupoid $\times$-Hopf coalgebra $\Cb\Gc^1$.    At the end we define  the  symmetries produced by $\times$-Hopf coalgebras on  algebras and coalgebras.

Let us bring a series of definitions that are   needed as preliminaries for  stable anti Yetter-Drienfeld (SAYD) modules over $\times$-Hopf coalgebras.
\begin{definition}
A right module $M$ over a left $\times_C$-Hopf coalgebra $\mathcal{K}$ is a right $C$-comodule where the action $\triangleleft:  M\Co_C \mathcal{K}  \longrightarrow M$ is a right $C$-comodule map. Here the right $C$-comodule structure of $M \Co_C\mathcal{K} $ is given by $m\ot k\longmapsto m\ot k\ps{2}\ot \beta({k\ps{1}}).$ Therefore the right $C$-comodule property of the action is equivalent to
\begin{equation}
  (m\triangleleft k)\sns{0}\ot (m\triangleleft k)\sns{1}=m\triangleleft k\ps{2}\ot \beta(k\ps{1}), \quad m\in M, k\in \mathcal{K}.
\end{equation}

\end{definition}

\begin{definition}[\cite{bal}]
  A left module $M$ over a left $\times_C$-Hopf coalgebras $\mathcal{K}$  is a left $C$-comodule where the action $  \mathcal{K} \Co_C M \longrightarrow M $ is a left $C$-comodule map where the left $C$-comodule structure of $\mathcal{K} \Co_C M$ is given by $k\ot m\longmapsto \alpha(k\ps{1}) \ot k\ps{2}\ot m.$
  \end{definition}

A left $\mathcal{K}$-module $M$  can be naturally  equipped with a right  $C$-comodule  structure by
\begin{equation}\label{coact}
  m\longmapsto  \eta(m\sns{-1})\ps{1}\triangleright m\sns{0} \ot  \alpha (\eta(m\sns{-1})\ps{2}).
\end{equation}
One checks that via these comodule structures,  $M$ becomes a $C$-bicomodule  and  the left $\mathcal{K}$-action is  a $C$-bicomodule map, \ie
\begin{equation}
  (k\triangleright m)\sns{-1}\ot (k\triangleright m)\sns{0}= \alpha(k\ps{1}) \ot k\ps{2}\triangleright m
\end{equation}
\begin{equation}
  (k\triangleright m)\sns{0}\ot (k\triangleright m)\sns{1}= k\ps{1}\triangleright m \ot \alpha(k\ps{2}).
\end{equation}
Furthermore with respect to the right $C$-coaction defined in \eqref{coact} one can use the $C$-comodule map property of $\mu$ and \eqref{2.8}  to show that  for  all $k\ot m  \in \Kc \Co_C M$ we have,
\begin{equation}
  k\triangleright m\sns{0} \ot m\sns{1}= k\ps{1}\triangleright m\ot \beta(k\ps{2}).
\end{equation}
For later purposes, one should note that any $\Kc$-module map is proved to be $C$-bicolinear.

\begin{definition}
  Let  $\mathcal{K}$ be a left $\times_C$-Hopf coalgebra. A right $\mathcal{K}$-comodule $M$ is a  right $\Kc$-comodule, $\rho: M\ra M\ot \Kc$,   where $\Kc$ is considered as a coalgebra.
    A right $\mathcal{K}$-comodule $M$ can be naturally equipped with a $C_{\rm cop}$-bicomodule structure where the right $C_{\rm cop}$-coaction is given by
  \begin{equation}\label{induced-coaction}
    m\sns{0}\ot m\sns{1}:= m\ns{0}\ot \beta(m\ns{1}),
  \end{equation}
  and the left $C_{\rm cop}$-coaction by
  \begin{align}
    m\sns{-1}\ot m\sns{0}:= \alpha(m\ns{1})\ot m\ns{0}.
  \end{align}
     One observes that the map  $\rho$ actually lands in $M\Co_{C_{\rm cop}} \Kc$, and the induced  map $\rho: M\longrightarrow M \Co_{C_{\rm cop}} \mathcal{K}$, is a coassociative,  counital, and  $C$-bicomodule map.
\end{definition}
The $C_{\rm cop}$-bicomodule structure of $M$ is a result of the cocommutativity of the ranges of the maps $\alpha$ and $\beta$.

\begin{definition}
 Let  $\mathcal{K}$  be a left $\times_C$-Hopf coalgebra. A left $\mathcal{K}$-comodule $M$ is a  left $\mathcal{K}$-comodule, $\rho:M\longrightarrow \mathcal{K}\ot M$, where $\mathcal{K}$ is considered as a coalgebra.
  A left $\mathcal{K}$-comodule $M$ can be naturally equipped with a $C_{\rm cop}$-bicomodule structure by
  \begin{align}
    &\rho^R_{C}(m):= m\ns{0}\ot \alpha(m\ns{-1}),\quad \rho_C^L(m):= \beta(m\ns{-1})\ot m\ns{0}.
  \end{align}

    The map  $\rho$  lands in $\Kc \Co_{C_{\rm cop}} M$, and the induced  map $\rho: M\longrightarrow \Kc \Co_{C_{\rm cop}} M$  is a coassociative,  counital, and  $C$-bicomodule map.
\end{definition}

Now we are ready to define stable anti Yetter-Drinfeld modules over  $\times$-Hopf coalgebras.
\begin{definition}
Let  $\mathcal{K}$  be a left $\times_C$-Hopf coalgebra, $M$ a left $\mathcal{K}$-module and a right $\mathcal{K}$-comodule. We call $M$ a
left-right anti Yetter-Drinfeld module over $\mathcal{K}$ if

 i) The right coaction of $C$ induced on $M$ via \eqref{induced-coaction} coincides with the canonical coaction defined in \eqref{coact}.\\

ii) For all $ m \ot k \in M \Co_{C_{\rm cop}} \Kc$, we have
\begin{equation}\label{SAYD}
(  k\triangleright m)\ns{0} \ot (  k\triangleright m)\ns{1}= \nu^+(m\ns{1}, k\ps{1})\triangleright m\ns{0} \ot k\ps{2} \nu^-(m\ns{1}, k\ps{1}).
\end{equation}
We call $M$  stable if $m\ns{1}m\ns{0}=m$.
\end{definition}
One notes that the AYD condition \eqref{SAYD} is well-defined as $ m \ot k \in M \Co_{C_{\rm cop}} \Kc$ yields
\begin{align*}
  m\ns{0}\ot \beta(m\ns{1})\ot k= m\ot \beta(k\ps{1})\ot k\ps{2}.
\end{align*}
By applying $ \blacktriangledown \ot \Id \ot \Delta$ on the both hand sides of the previous equation and using the coassociativity of the right coaction of $\Kc$ on $M$ we have
\begin{align*}
  &m\ns{0}\ot m\ns{1}\ps{1}\ot \beta(m\ns{1}\ps{2})\ot k\ps{1}\ot k\ps{2}= \\
  &m\ns{0}\ot m\ns{1}\ot \beta(k\ps{1})\ot k\ps{2}\ot k\ps{3}.
\end{align*}
This shows that
\begin{align*}
  m\ns{0}\ot m\ns{1}\Co_{C_{\rm cop}} k\ps{1}\ot k\ps{2},
\end{align*}
 and therefore \eqref{SAYD} is well-defined. This definition generalizes the definition of the stable anti Yetter-Drinfeld modules over Hopf algebras \cite[Definition 4.1]{HaKhRaSo1}.

 One similarly defines modules and comodules over right $\times$-Hopf coalgebras.

\begin{definition}

 Let $\mathcal{B}$  be a right $\times_C$-Hopf coalgebra, $M$ be a right $\mathcal{B}$-module and a left $\mathcal{B}$-comodule. We call $M$ a
right-left anti Yetter-Drinfeld module over $\mathcal{B}$ if

i)
The left coaction of $C$ on $M$ is the same as the following canonical coaction \begin{equation}\label{c-action}
  m\longmapsto \alpha(\eta(m\sns{1})\ps{1})\ot m\sns{0}\triangleleft \eta(m\sns{1})\ps{2}.
\end{equation}

ii) For all $ b\ot m \in \Bc \Co_{C_{\rm cop}} M$ we have
\begin{equation}\label{SAYD2}
(m \triangleleft b)\nsb{-1} \ot (  m \triangleleft b)\nsb{0}= \nu^+(b\ps{2}, m\nsb{-1}) b\ps{1}\ot m\nsb{0}\triangleleft \nu^-(b\ps{2}, m\nsb{-1}).
\end{equation}
We call $M$  stable if $m\nsb{0} m\nsb{-1}=m$.
\end{definition}
One notes that the AYD condition \eqref{SAYD2} is well defined because the following term in cotensor is well-defined.
\begin{equation*}
  b\ps{1}\ot b\ps{2} \Co_{C_{\rm cop}} m\nsb{-1} \ot m\nsb{0}.
\end{equation*}
The following object will help us to introduce an example of SAYD module for $\times$-Hopf coalgebras.
\begin{definition}
A right group-like  of a left $\times_C$-Hopf coalgebra $\mathcal{K}$ is a linear map $\delta: C\longrightarrow \mathcal{K}$ satisfying the following conditions\\
\begin{align}\label{char-1}
 & \delta(c)\ps{1}\ot \alpha(\delta(c)\ps{2})= \delta(c\ps{1})\ot c\ps{2},\\ \label{char-2}
 & \delta(\alpha(\delta(c)\ps{1}))\ot \delta(c)\ps{2}= \delta(c)\ps{1}\ot \delta(c)\ps{2},\\ \label{char-3}
 & \ve(\delta(c))=\ve(c).
 \end{align}
\end{definition}
\begin{example}{\rm
   The unit map $\eta$ is a right group-like of a right $\times_C$-Hopf coalgebra. The above three conditions for $\eta$ are equivalent to \eqref{bos}, \eqref{etaalphaeta2} and \eqref{epsil}, respectively. One also notes that for a Hopf algebra the above  definition reduces to the original definition of a group-like element.}
 \end{example}
 \begin{definition}
   A character of  a $\times_C$-Hopf coalgebra $\mathcal{K}$, left or right, is a ring morphism $\sigma:\mathcal{K}\longrightarrow C,$ where  $\sigma(\eta_{\mathcal{K}}(c))=c$. Here $C$ is considered to be a ring on itself, \ie  the product $C\Co_C C\longrightarrow C$ is given by $x\ot y\longmapsto \ve(x)y.$
 \end{definition}
 As an example, the source and target maps $\alpha$ and $\beta$ are characters of the $\times_C$-Hopf coalgebra $\mathcal{K}= C\ot C_{{\rm cop}}$.

  Let $\mathcal{K}$ be  a left $\times_C$-Hopf coalgebra, $\delta: C\longrightarrow \mathcal{K}$ a right group-like,  and $\sigma:\mathcal{K}\longrightarrow C$ a character  of $\mathcal{K}$. We define the action and  a coaction of $\Kc$ on $C$ as follows,
 \begin{equation}\label{actcoact}
   c\longmapsto \alpha(\delta(c)\ps{1})\ot \delta(c)\ps{2}, \quad k\triangleright c= \alpha(k\ps{1})\ve(c)\ve(\sigma(k\ps{2})).
 \end{equation}

\begin{lemma}
The action defined in \eqref{actcoact} is associative.
\end{lemma}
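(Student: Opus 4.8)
We must show that the action
\begin{equation*}
k\triangleright c = \alpha(k\ps{1})\,\ve(c)\,\ve(\sigma(k\ps{2}))
\end{equation*}
defined in \eqref{actcoact} is associative, i.e. that $(k\ell)\triangleright c = k\triangleright(\ell\triangleright c)$ for all composable $k\ot \ell$ (lying in $\Kc\Co_C\Kc$) and $c\in C$.

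**The plan.** My plan is to compute both sides directly using the bicoalgebroid axioms and expand the definition. First I would unwind the right-hand side: since $\ell\triangleright c = \alpha(\ell\ps{1})\,\ve(c)\,\ve(\sigma(\ell\ps{2}))$ is a scalar multiple of the element $\alpha(\ell\ps{1})\in C$, applying $k\triangleright(-)$ gives
\begin{equation*}
k\triangleright(\ell\triangleright c) = \alpha(k\ps{1})\,\ve\!\big(\alpha(\ell\ps{1})\big)\,\ve\!\big(\sigma(k\ps{2})\big)\,\ve(c)\,\ve\!\big(\sigma(\ell\ps{2})\big).
\end{equation*}
Here I would use that $\alpha$ is a coalgebra map so $\ve(\alpha(\ell\ps{1})) = \ve(\ell\ps{1})$, collapsing $\alpha(\ell\ps{1})\ve(\ell\ps{2})$-type terms by counitality. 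For the left-hand side I expand $\alpha(\ell\ps{1})\ve(\ell\ps{2})=\alpha(\ell)$ using counitality, so the whole thing reduces to comparing
\begin{equation*}
(k\ell)\triangleright c = \alpha\!\big((k\ell)\ps{1}\big)\,\ve(c)\,\ve\!\big(\sigma((k\ell)\ps{2})\big)
\qquad\text{versus}\qquad
\alpha(k)\,\ve(\ell)\,\ve(\sigma(k\ps{2}))\cdots,
\end{equation*}
and the task becomes a bookkeeping problem about how $\alpha$, $\sigma$, and $\ve$ interact with the multiplication $\mu$.

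**Key ingredients.** The central tool is Lemma \ref{leftbi}(iii), namely $\alpha(hk)=\alpha(h)\ve(k)$, which controls $\alpha$ on products, together with the multiplicativity of the comultiplication \eqref{brzmil2}, $\D(k\ell)=k\ps{1}\ell\ps{1}\ot k\ps{2}\ell\ps{2}$. The character property of $\sigma$ supplies $\sigma(k\ell)=\ve(k)\sigma(\ell)$-type relations (the ring morphism condition for the target ring structure $x\ot y\mapsto \ve(x)y$ on $C$), and $\ve(\mu(g\ot h))=\ve(g)\ve(h)$ from \eqref{var}. Applying $\alpha$ to the first tensor factor of $\D(k\ell)$ and using Lemma \ref{leftbi}(iii) should let me rewrite $\alpha((k\ell)\ps{1})$ in terms of $\alpha(k\ps{1})\ve(\ell\ps{1})$ after suitable counital contractions; simultaneously $\ve(\sigma((k\ell)\ps{2}))$ should factor as $\ve(\sigma(k\ps{2}))\ve(\sigma(\ell\ps{2}))$ via the character and counit axioms. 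I expect both sides to reduce to the common expression $\alpha(k\ps{1})\,\ve(c)\,\ve(\sigma(k\ps{2}))\,\ve(\ell)$.

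**The main obstacle.** The delicate point is that $k\ot\ell$ lives in the cotensor product $\Kc\Co_C\Kc$, not the full tensor product, so I must be careful that every manipulation respects the defining relation \eqref{mmm}, $g\ps{2}\ot\beta(g\ps{1})\ot h = g\ot\alpha(h\ps{1})\ot h\ps{2}$. In particular, when I split $\D(k\ell)$ and want to push $\alpha$ or $\sigma$ onto the correct factor, I will need the dual-Takeuchi relation to move the $C$-label between $k$ and $\ell$; this is exactly where the "cocommuting images" condition $\alpha(h\ps{1})\ot\beta(h\ps{2})=\alpha(h\ps{2})\ot\beta(h\ps{1})$ and the identity \eqref{tak} become relevant. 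The real work is thus verifying that the scalar $\ve(\sigma(k\ps{2}))$ distributes correctly across the product without violating the cotensor constraint — I anticipate that Lemma \ref{leftbi}(iii)/(iv) combined with counitality handles this cleanly, so that the computation, while requiring attention to which comodule structure is in play, presents no genuine structural difficulty beyond careful Sweedler bookkeeping.
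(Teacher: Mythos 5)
Your overall strategy is exactly the paper's: expand $(k\ell)\triangleright c$ using multiplicativity of the coproduct \eqref{brzmil2}, split $\alpha$ across the product via Lemma \ref{leftbi}(iii), factor the $\sigma$-term via the ring-morphism (character) property, and finish with counitality of the coalgebra map $\alpha$. Your anticipated "main obstacle" is not one: neither \eqref{tak} nor the cocommutation of the images of $\alpha$ and $\beta$ is needed anywhere; the paper's computation never touches the cotensor constraint.

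There is, however, one concrete slip that would stop your computation from closing as written. The ring-morphism property of $\sigma$ with respect to the product $x\ot y\mapsto \ve(x)y$ on $C$ is $\sigma(k\ell)=\ve(\sigma(k))\,\sigma(\ell)$, \emph{not} $\sigma(k\ell)=\ve(k)\,\sigma(\ell)$; hence the factorization you actually get is $\ve(\sigma(k\ps{2}\ell\ps{2}))=\ve(\sigma(k\ps{2}))\,\ve(\sigma(\ell\ps{2}))$. Relatedly, your predicted common value $\alpha(k\ps{1})\,\ve(c)\,\ve(\sigma(k\ps{2}))\,\ve(\ell)$ presumes $\ve\circ\sigma=\ve$, which is not among the axioms of a character and is never used in the paper. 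With the correct relation, both sides reduce to
\begin{equation*}
\alpha(k\ps{1})\,\ve(\ell\ps{1})\,\ve(c)\,\ve(\sigma(k\ps{2}))\,\ve(\sigma(\ell\ps{2})),
\end{equation*}
and the paper's last step is simply to rewrite $\ve(\ell\ps{1})=\ve(\alpha(\ell\ps{1}))$ (counitality of $\alpha$) so that this expression is literally $k\triangleright\big[\alpha(\ell\ps{1})\ve(c)\ve(\sigma(\ell\ps{2}))\big]=k\triangleright(\ell\triangleright c)$, with no counitality assumption on $\sigma$ required. Once you replace your character identity by the correct one and drop the $\ve(\ell)$ form of the common expression, your argument coincides with the paper's.
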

\begin{proof}
 For any $k_1,k_2\in\mathcal{K}$ and $c\in C$ we have,
\begin{align*}
 &k_1 k_2\triangleright c= \alpha(k_1 k_2)\ps{1})\ve(c) \ve(\sigma(k_1 k_2)\ps{2}))\\
 & =\alpha(k_1\ps{1} k_2\ps{1})\ve(c) \ve(\sigma(k_1\ps{2} k_2\ps{2}))\\
 &=\alpha(k_1\ps{1})\ve(k_2\ps{1})\ve(c)\ve(\sigma(k_1\ps{2}))\ve(\sigma(k_2\ps{2})) \\
 &=\alpha(k_1\ps{1})\ve(\alpha(k_2\ps{1}))\ve(c)\ve(\sigma(k_1\ps{2}))\ve(\sigma(k_2\ps{2}))\\
 &= k_1\triangleright [\alpha(k_2\ps{1})\ve(c)\ve(\sigma(k_2\ps{2})]=k_1\triangleright (k_2\triangleright c).
\end{align*}
We use  \eqref{brzmil2} in the second equality. We apply  Lemma \ref{leftbi}(iii), the multiplicity of the ring map $\sigma$,  and the multiplication rule in $C$   in the third equality. Finally we use  the counitality of $\alpha$ in the fourth equality.
\end{proof}
\begin{lemma}
The coaction defined in \eqref{actcoact} is coassociative and counital.
\end{lemma}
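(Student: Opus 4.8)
The plan is to read \eqref{actcoact} as a right $\mathcal{K}$-coaction $\rho\colon C\ra C\ot\mathcal{K}$, $\rho(c)=\alpha(\delta(c)\ps{1})\ot\delta(c)\ps{2}$, and to verify the two comodule axioms $(\Id_C\ot\ve_{\mathcal{K}})\circ\rho=\Id_C$ and $(\rho\ot\Id_{\mathcal{K}})\circ\rho=(\Id_C\ot\Delta_{\mathcal{K}})\circ\rho$ directly from the three right group-like identities \eqref{char-1}, \eqref{char-2}, \eqref{char-3}. Before either computation I would first isolate the auxiliary identity $\alpha(\delta(c))=c$: applying $\ve_{\mathcal{K}}\ot\Id_C$ to \eqref{char-1} gives $\alpha(\delta(c))=\ve(\delta(c\ps{1}))\,c\ps{2}$ on the right, and \eqref{char-3} then turns $\ve(\delta(c\ps{1}))$ into $\ve(c\ps{1})$, so the right-hand side collapses to $\ve(c\ps{1})c\ps{2}=c$. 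This single identity does essentially all of the work.

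Counitality is then immediate: $(\Id_C\ot\ve_{\mathcal{K}})\rho(c)=\alpha\big(\delta(c)\ps{1}\,\ve(\delta(c)\ps{2})\big)=\alpha(\delta(c))=c$, using counitality of $\Delta_{\mathcal{K}}$ and the auxiliary identity. For coassociativity I would expand both sides in Sweedler notation. The right-hand side is direct: $(\Id_C\ot\Delta_{\mathcal{K}})\rho(c)=\alpha(\delta(c)\ps{1})\ot\delta(c)\ps{2}\ot\delta(c)\ps{3}$ by coassociativity of $\Delta_{\mathcal{K}}$. The left-hand side requires applying $\rho$ a second time to the $C$-component, giving $\alpha\big(\delta(\alpha(\delta(c)\ps{1}))\ps{1}\big)\ot\delta(\alpha(\delta(c)\ps{1}))\ps{2}\ot\delta(c)\ps{2}$.

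The key step is to reconcile these two expressions, and here \eqref{char-2} is exactly what is needed, but in a form promoted by one coproduct. Concretely, I would apply $\Delta_{\mathcal{K}}\ot\Id_{\mathcal{K}}$ to both sides of \eqref{char-2} (taking the $\mathcal{K}$-coproduct of the first tensorand) and relabel the right-hand side by coassociativity of $\Delta_{\mathcal{K}}$, obtaining $\delta(\alpha(\delta(c)\ps{1}))\ps{1}\ot\delta(\alpha(\delta(c)\ps{1}))\ps{2}\ot\delta(c)\ps{2}=\delta(c)\ps{1}\ot\delta(c)\ps{2}\ot\delta(c)\ps{3}$; applying $\alpha\ot\Id_{\mathcal{K}}\ot\Id_{\mathcal{K}}$ to this identity yields precisely the left- and right-hand sides computed above, so the two agree and coassociativity holds. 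I expect the only real obstacle to be the Sweedler bookkeeping — keeping track of which coproduct each index belongs to, and checking that the promotion of \eqref{char-2} aligns its legs with those of $(\rho\ot\Id_{\mathcal{K}})\rho$ — since no structural input beyond \eqref{char-2} and the coassociativity of $\Delta_{\mathcal{K}}$ is required.
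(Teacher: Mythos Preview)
Your proposal is correct and follows essentially the same argument as the paper: both establish the auxiliary identity $\alpha(\delta(c))=c$ from \eqref{char-1} and \eqref{char-3} for counitality, and both reduce coassociativity to \eqref{char-2} promoted by one application of $\Delta_{\mathcal{K}}$ followed by $\alpha$ on the first leg. The paper presents this as a single chain of equalities invoking \eqref{char-2} directly, while you spell out the promotion step more explicitly, but the content is identical.
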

\begin{proof}
It is straightforwardly seen that,
 \begin{align*}
 &c\ns{0}\ot c\ns{1}\ps{1}\ot c\ns{1}\ps{2} =\alpha(\delta(c)\ps{1})\ot \delta(c)\ps{2}\ot \delta(c)\ps{3}\\
 &=\alpha(\delta(\alpha(\delta(c)\ps{1}))\ps{1})\ot \delta(\alpha(\delta(c)\ps{1}))\ps{2}\ot \delta(c)\ps{2}= c\ns{0}\ns{0}\ot c\ns{0}\ns{1}\ot c\ns{1}.
\end{align*}
Here we  use \eqref{char-2} in the second equality. The counitality of the coaction comes as follows.
\begin{align*}
  \ve(\delta(c)\ps{2})\alpha(\delta(c)\ps{1})=\alpha(\delta(c))=c.
\end{align*}
For the last equality, we apply $\ve\ot \Id$ on both hand sides of \eqref{char-1} and then we use  \eqref{char-3}.
\end{proof}
\begin{proposition} \label{Proposition-SAYD-on-C}The action and coaction defined in \eqref{actcoact} amount to an AYD module over  $\mathcal{K}$ if and only if
\begin{align}\notag
  &\ve(\sigma(\nu^+\ps{2}(\delta(c)\ot k)))\ve(\nu^-(\delta(c)\ot k))\alpha(\nu^+\ps{1}(\delta(c)\ot k)) \\\label{ayd11}
  &=\ve(c)\ve(\sigma(k\ps{2}))\alpha(\delta(\alpha(k\ps{1}))),
\end{align}
and
\begin{align}
  \alpha(\eta(c)\ps{2})\ve(\sigma(\eta(c)\ps{1}))=\alpha(\delta(c)).
\end{align}
It is stable if and only if
\begin{equation}
  \alpha(\delta(c)\ps{1})\ve(\sigma(\delta(c)\ps{2}))=c.
\end{equation}

\end{proposition}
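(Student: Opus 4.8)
The plan is to verify the stated "if and only if" criteria by directly computing the anti-Yetter-Drinfeld condition \eqref{SAYD} and the stability condition for the specific action and coaction defined in \eqref{actcoact}, reducing each side to an expression in terms of $\alpha$, $\beta$, $\delta$, $\sigma$, and $\nu^{\pm}$, and then matching them. The action is $k \triangleright c = \alpha(k\ps{1})\ve(c)\ve(\sigma(k\ps{2}))$ and the coaction is $c \mapsto c\ns{0} \ot c\ns{1} = \alpha(\delta(c)\ps{1}) \ot \delta(c)\ps{2}$, so both sides of the AYD identity live in $C \ot \Kc$ after applying the respective structure maps. I would first write out the left-hand side $(k \triangleright c)\ns{0} \ot (k \triangleright c)\ns{1}$ by substituting the coaction formula into the result of the action, and separately write out the right-hand side $\nu^+(c\ns{1}, k\ps{1}) \triangleright c\ns{0} \ot k\ps{2}\nu^-(c\ns{1}, k\ps{1})$.

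For the \textbf{AYD computation}, the key is that the action produces a scalar multiple of $\alpha(k\ps{1})$, so the left-hand side collapses: $(k\triangleright c)\ns{0} \ot (k\triangleright c)\ns{1}$ becomes $\alpha(\delta(\alpha(k\ps{1}))\ps{1})\ve(c)\ve(\sigma(k\ps{2})) \ot \delta(\alpha(k\ps{1}))\ps{2}$ after applying the coaction to the element $\alpha(k\ps{1})\ve(c)\ve(\sigma(k\ps{2})) \in C$. On the right-hand side, applying the action $\nu^+(c\ns{1},k\ps{1}) \triangleright c\ns{0}$ yields $\alpha(\nu^+\ps{1}(\cdots))\ve(c\ns{0})\ve(\sigma(\nu^+\ps{2}(\cdots)))$, and I expect to use the counit of $C$ on $c\ns{0} = \alpha(\delta(c)\ps{1})$ together with the counitality relation $\ve(\delta(c)) = \ve(c)$ from \eqref{char-3} to simplify $c\ns{1} = \delta(c)\ps{2}$ appearing as the first slot of $\nu$. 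Comparing the second tensor factors via the properties of $\nu^{\pm}$ in Lemma \ref{property}, and comparing the first tensor factors after applying a suitable counit, should reduce the AYD condition precisely to the displayed identity \eqref{ayd11}. The supplementary condition $\alpha(\eta(c)\ps{2})\ve(\sigma(\eta(c)\ps{1})) = \alpha(\delta(c))$ arises from requiring that the induced right $C$-coaction \eqref{induced-coaction} agree with the canonical one \eqref{coact}, i.e.\ part (i) of the AYD definition, which I would check by unwinding \eqref{coact} for this module.

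For \textbf{stability}, I would compute $c\ns{1} \triangleright c\ns{0} = \delta(c)\ps{2} \triangleright \alpha(\delta(c)\ps{1})$ directly from the action formula, obtaining $\alpha(\delta(c)\ps{2}\ps{1})\ve(\alpha(\delta(c)\ps{1}))\ve(\sigma(\delta(c)\ps{2}\ps{2}))$, then use coassociativity of $\Delta_{\Kc}$ and the counit relations to collapse this to $\alpha(\delta(c)\ps{1})\ve(\sigma(\delta(c)\ps{2}))$, so stability $c\ns{1}\triangleright c\ns{0} = c$ becomes exactly the displayed equation. The main obstacle I anticipate is bookkeeping the interaction between the group-like relations \eqref{char-1}--\eqref{char-2} and the map $\nu^{\pm}$ inside the cotensor product $\Kc \Co_C M$: one must repeatedly justify that the various Sweedler indices lie in the correct cotensor spaces so that $\delta$, $\sigma$, and $\nu$ can be applied legally, and in particular that $\delta(c) \ot k$ is a legitimate argument for $\nu^{-1}$. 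Navigating these well-definedness constraints while keeping the counit simplifications straight is where the real care is required; once the cotensor compatibilities are in hand, each implication should follow by a direct matching of the two sides.
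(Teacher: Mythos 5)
Your reductions are correct as far as they go: the left side of \eqref{SAYD} collapses to $\ve(c)\ve(\sigma(k\ps{2}))\,\alpha\bigl(\delta(\alpha(k\ps{1}))\ps{1}\bigr)\ot\delta(\alpha(k\ps{1}))\ps{2}$, on the right side $\ve\bigl(\alpha(\delta(c)\ps{1})\bigr)\,\delta(c)\ps{2}=\delta(c)$ does put $\delta(c)$ into the first slot of $\nu^{-1}$, your identification of the second displayed condition with part (i) of the AYD definition is right, and your stability computation is complete (there both sides genuinely live in $C$, so the equivalence is immediate). The gap is in the AYD part, and it is the heart of the ``if and only if''. The AYD condition is an identity in $C\ot\Kc$, whereas \eqref{ayd11} is an identity in $C$: it is precisely the image of the AYD identity under $\Id\ot\ve$ (use \eqref{var} to split $\ve(k\ps{2}\,\nu^-(\delta(c)\ot k\ps{1}))$ and absorb $\ve(k\ps{2})$ into $k\ps{1}$); likewise the second displayed condition is the image of condition (i) under $\ve\ot\Id$. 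So your plan proves necessity only. The phrase ``comparing the second tensor factors via the properties of $\nu^{\pm}$\dots and comparing the first tensor factors after applying a suitable counit'' is not a valid argument for sufficiency: a tensor identity cannot be checked leg by leg, and applying a counit destroys the $\Kc$-leg, so nothing in your proposal reconstructs it. To close the converse one needs an actual mechanism for rebuilding that leg from the counited identity --- for instance the fact, recorded in the paper right after Lemma \ref{property}, that $\nu^{-1}$ is a right $\Kc$-comodule map, i.e. $\nu^-(\delta(c)\ot k\ps{1})\ot\nu^+(\delta(c)\ot k\ps{1})\ot k\ps{2}=\nu^-(\delta(c)\ot k)\ot\nu^+(\delta(c)\ot k)\ps{1}\ot\nu^+(\delta(c)\ot k)\ps{2}$, combined with the group-like axioms \eqref{char-1}--\eqref{char-2} and $\alpha\circ\delta=\Id$. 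That idea is absent, and without it the stated equivalence is unproved.

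For comparison, the paper's own proof is a single sentence claiming the coaction \eqref{actcoact} can be rewritten as $c\mapsto c\ps{1}\ot\delta(c\ps{2})$, after which the verification is declared easy. You do not use this rewriting, and in fact it does not follow from \eqref{char-1}--\eqref{char-3}: for $\Kc=C\ot C_{{\rm cop}}$ with $\delta(c)=c\ps{2}\ot\theta(c\ps{1})$, the coaction computed in Proposition \ref{prop-sayd-top} is $c\mapsto c\ps{2}\ot c\ps{3}\ot\theta(c\ps{1})$, which differs from $c\ps{1}\ot\delta(c\ps{2})=c\ps{1}\ot c\ps{3}\ot\theta(c\ps{2})$ unless extra cocommutativity is assumed. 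So your direct route is actually more faithful to the definitions than the paper's shortcut, but as it stands it establishes only one direction of each claimed equivalence; the de-couniting step that would give sufficiency is the missing idea in both your proposal and, arguably, in the paper itself.
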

\begin{proof}
One notes that the  right $\mathcal{K}$-comodule structure of $C$ is given by $c\longmapsto c\ps{1}\ot \delta(c\ps{2})$.
\end{proof}
 We specialize  Proposition \ref{Proposition-SAYD-on-C} to the left $\times_C$-Hopf coalgebra $\mathcal{K}= C\ot C_{{\rm cop}}$ whose structure is given in  the Example \ref{example}.
 We first investigate  right group-like morphisms of $C\ot C_{{\rm cop}}$. We claim that a map $\delta: C\longrightarrow C\ot C_{{\rm cop}}$ is a right group-like if and only if $\delta(c)=c\ps{2}\ot \theta(c\ps{1})$ for some counital anti-coalgebra map $\theta: C \longrightarrow C$.
To prove the claim, let $\delta$ be a right group-like and $\delta(c)= \sum_i a_i\ot b_i$. We have
 \begin{align*}
 &\delta(c)= \sum_i a_i\ot b_i = \sum_i \ve(b_i\ps{1})a_i\ps{2}\ot \ve(a_i\ps{1})b_i\ps{2}=\\
 & \sum_i \alpha(a_i\ps{2}\ot b_i\ps{1})\ot \beta(a_i\ps{1}\ot b_i\ps{2})=\alpha(\delta(c)\ps{2})\ot \beta(\delta(c)\ps{1})=c\ps{2}\ot \beta(\delta(c\ps{1})),
 \end{align*}
where we use \eqref{char-1} in the last equality. Let us set $\theta= \beta \circ \delta$. The map $\theta$ is obviously an anti-coalgebra map. It is counital because,
 \begin{equation*}
 \ve(\theta(c))=\ve(\beta(\delta(c)))=\ve(\beta(\sum_i a_i\ot b_i))=\sum_i \ve(\ve(a_i)b_i)=\sum_i \ve(a_i)\ve(b_i)=\ve(c).
 \end{equation*}
 Conversely, let $\theta: C\longrightarrow C$ be an anti-coalgebra map. One defines a group-like  $\delta:C\longrightarrow C\ot C_{{\rm cop}}$ by
 \begin{equation} c\longmapsto c\ps{2} \ot \theta(c\ps{1}).
 \end{equation}
  Indeed,  the following computations show that  $\delta$ satisfies \eqref{char-1}, \eqref{char-2}, and  \eqref{char-3}. First we  use the definition of $\alpha $ and counitality of $\theta$ to see
 \begin{align*}
   &\delta(c)\ps{1}\ot \alpha(\delta(c)\ps{2})= c\ps{3}\ot \theta(c\ps{2})\ot \alpha(c\ps{4}\ot \theta(c\ps{1})) \\
   &=c\ps{3}\ot \theta(c\ps{2})\ot c\ps{4}\ve(c\ps{1}))=c\ps{2}\ot \theta(c\ps{1})\ot c\ps{3}= \delta(c\ps{1})\ot c\ps{2}.
 \end{align*}
  To see \eqref{char-2}, we have
 \begin{align*}
   &\delta(\alpha(\delta(c)\ps{1}))\ot \delta(c)\ps{2}= \delta(\alpha(c\ps{3}\ot \theta(c\ps{2})))\ot c\ps{4}\ot \theta(c\ps{1})\\ &=\delta(c\ps{3}\ve(\theta(c\ps{2})))\ot c\ps{4}\ot \theta(c\ps{1})=\delta(c\ps{3}\ve(c\ps{2}))\ot c\ps{4}\ot \theta(c\ps{1})\\
   &=\delta(c\ps{3}\ve(c\ps{2}))\ot c\ps{4}\ot \theta(c\ps{1})= c\ps{3}\ot \theta(c\ps{2})\ot c\ps{4} \ot \theta(c\ps{1})= \delta(c)\ps{1}\ot \delta(c)\ps{2},
 \end{align*}
where the counitality of $\theta$ is used in the third equality. Also
\begin{equation*}
  \ve(\delta(c))= \ve(c\ps{2}\ot \theta(c\ps{1}))= \ve(c\ps{2}) \ve(\theta(c\ps{1}))= \ve(c\ps{2})\ve(c\ps{1})= \ve(c),
\end{equation*}
shows that $\delta$ satisfies \eqref{char-3}, where the counitality of $\theta$ is used in the third equality. Therefore $\delta$ is a right group-like.
Now we find all characters  $\sigma: C\ot C_{{\rm cop}}\longrightarrow C$. Since $\sigma$ is a ring morphism, for $c,d\in C$ and $c',d'\in C_{{\rm cop}}$ we have
\begin{align}\label{abc1}
 \ve(\sigma(c\ot c'))\sigma(d\ot d')= \sigma(\mu(c\ot c'\Co_C d\ot d'))=\ve(c')\ve(d)\sigma(c\ot d').
\end{align}
For $\mathcal{K}=C\ot C_{{\rm cop}}$, the condition $\sigma(\eta_{\mathcal{K}}(c))=c$  is equivalent to
\begin{align}\label{abc22}
\sigma(c\ps{1}\ot c\ps{2})=c, \quad c\in C.
\end{align}
Let us set $c\ot c'= a\ps{1}\ot a\ps{2}$ in \eqref{abc1}. Using \eqref{abc22}, we obtain
\begin{align}
  \ve(a)\sigma(d\ot d')=\ve(d)\sigma(a\ot d').
\end{align}
If in the preceding equation we put $d\ot d'= b\ps{1}\ot b\ps{2}$, we have
\begin{align}\label{abc44}
   \sigma(a\ot b)=\ve(a)b, \quad a,b\in C.
\end{align}
Conversely, one easily sees that any $\mathbb{C}$-linear morphism $\sigma: C\ot C_{{\rm cop}}\longrightarrow C$ satisfying \eqref{abc44} is a character.

\begin{proposition}\label{prop-sayd-top}
  Let $\mathcal{K}$ be the left $\times_C$-Hopf coalgebra $C\ot C_{{\rm cop}}$, $\delta$ a right group-like,  and $\sigma$ a character of $\mathcal{K}$. Then the following action and coaction
  \begin{align*}
  c\longmapsto c\ps{2}\ot c\ps{3}\ot \theta(c\ps{1}), \quad (c_1\ot c_2)\triangleright c_3= c_1 \ve(c_2) \ve(c_3),
\end{align*}
define a left-right $\Kc$-SAYD module on $C$.
\end{proposition}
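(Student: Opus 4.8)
The plan is to deduce the statement directly from the general criterion of Proposition \ref{Proposition-SAYD-on-C}, specialized to $\mathcal{K}=C\ot C_{{\rm cop}}$. First I would collect the structure maps supplied by Example \ref{example}: the source and target maps $\alpha(c\ot c')=c\ve(c')$ and $\beta(c\ot c')=\ve(c)c'$, the unit $\eta(c)=c\ps{1}\ot c\ps{2}$, and the Galois inverse $\nu^{-1}(c\ot c'\Co_{C_{{\rm cop}}}d\ot d')=\ve(c')c\ot d\ps{1}\Co_C d\ps{2}\ot d'$, so that $\nu^-(c\ot c',d\ot d')=\ve(c')c\ot d\ps{1}$ and $\nu^+(c\ot c',d\ot d')=d\ps{2}\ot d'$. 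I would also use the classification established just before the proposition: every right group-like is $\delta(c)=c\ps{2}\ot\theta(c\ps{1})$ for a counital anti-coalgebra map $\theta$, and every character is $\sigma(a\ot b)=\ve(a)b$. Substituting these into \eqref{actcoact} and simplifying by counitality of $\alpha$ and of $\theta$ (i.e.\ $\ve\circ\theta=\ve$) recovers exactly the action $(c_1\ot c_2)\triangleright c_3=c_1\ve(c_2)\ve(c_3)$ and the coaction $c\mapsto c\ps{2}\ot c\ps{3}\ot\theta(c\ps{1})$ of the statement; this confirms that the two displayed formulas are precisely the specialization to which Proposition \ref{Proposition-SAYD-on-C} applies.

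Granting this, it remains to verify the three identities of Proposition \ref{Proposition-SAYD-on-C}: equation \eqref{ayd11}, the accompanying identity $\alpha(\eta(c)\ps{2})\ve(\sigma(\eta(c)\ps{1}))=\alpha(\delta(c))$, and the stability identity $\alpha(\delta(c)\ps{1})\ve(\sigma(\delta(c)\ps{2}))=c$. For the latter two I expect only short computations. Writing out $\eta(c)=c\ps{1}\ot c\ps{2}$ and using the coproduct of $C\ot C_{{\rm cop}}$ (which reverses the second tensor factor), both sides of the second identity collapse to $c$ by counitality. Likewise $\alpha(\delta(c)\ps{1})\ve(\sigma(\delta(c)\ps{2}))$ telescopes to $c$ once one expands $\delta(c)\ps{1}$, $\delta(c)\ps{2}$ and applies $\ve\circ\theta=\ve$. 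The stability identity is simply the algebraic shadow of the direct check $c\ns{1}\triangleright c\ns{0}=(c\ps{3}\ot\theta(c\ps{1}))\triangleright c\ps{2}=c\ps{3}\ve(c\ps{1})\ve(c\ps{2})=c$.

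The only genuinely involved step is \eqref{ayd11}. I would first evaluate $\nu^{\pm}(\delta(c)\ot k)$ for $k=d\ot d'$ by feeding $\delta(c)=c\ps{2}\ot\theta(c\ps{1})$ into the explicit $\nu^{-1}$, which after collapsing via $\ve\circ\theta=\ve$ gives $\nu^-(\delta(c),k)=\ve(\theta(c\ps{1}))\,c\ps{2}\ot d\ps{1}=c\ot d\ps{1}$ and $\nu^+(\delta(c),k)=d\ps{2}\ot d'$. Taking the $\mathcal{K}$-coproduct of $\nu^+$ and applying $\alpha$, $\sigma$, $\ve$ as prescribed, the left-hand side of \eqref{ayd11} reduces, after telescoping the iterated coproducts of $d$ and $d'$, to $\ve(c)\ve(d')d$. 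On the right-hand side one computes $\alpha(k\ps{1})=d\ps{1}\ve(d'\ps{2})$, then $\delta$ of it, then $\alpha$, together with $\ve(\sigma(k\ps{2}))=\ve(d\ps{2})\ve(d'\ps{1})$; these likewise telescope to $\ve(c)\ve(d')d$. Hence both sides agree and \eqref{ayd11} holds.

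The main obstacle throughout is purely bookkeeping: correctly threading the Sweedler indices of $c$, $d$, and $d'$ through the two nested coproducts while remembering that the second leg of $C\ot C_{{\rm cop}}$ carries the opposite comultiplication, so that each application of $\alpha$, $\beta$, $\sigma$, or $\eta$ lands in the intended factor. Once the indices are tracked carefully, every expression collapses by counitality and by $\ve\circ\theta=\ve$, and the three conditions of Proposition \ref{Proposition-SAYD-on-C} are met; that proposition then yields that the stated action and coaction make $C$ a left-right $\mathcal{K}$-SAYD module, completing the proof.
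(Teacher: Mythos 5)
Your proposal is correct, and its first half coincides exactly with what the paper does: the paper's entire proof consists of the classification-based reduction, i.e.\ checking that with $\delta(c)=c\ps{2}\ot\theta(c\ps{1})$ and $\sigma(a\ot b)=\ve(a)b$ the general action and coaction \eqref{actcoact} collapse (by counitality of $\alpha$ and $\ve\circ\theta=\ve$) to the two displayed formulas. Where you differ is that you then go on to verify the three criteria of Proposition \ref{Proposition-SAYD-on-C} --- the AYD identity \eqref{ayd11}, the unit compatibility $\alpha(\eta(c)\ps{2})\ve(\sigma(\eta(c)\ps{1}))=\alpha(\delta(c))$, and the stability identity --- whereas the paper stops after the reduction and leaves these unchecked. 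Since Proposition \ref{Proposition-SAYD-on-C} is an if-and-only-if criterion rather than an unconditional guarantee, your extra step is not redundant: it is precisely what makes the argument logically complete, and your computations are right. Indeed, from the explicit $\nu^{-1}$ of Example \ref{example} one gets $\nu^-(\delta(c)\ot(d\ot d'))=c\ot d\ps{1}$ and $\nu^+(\delta(c)\ot(d\ot d'))=d\ps{2}\ot d'$, and both sides of \eqref{ayd11} then telescope to $\ve(c)\ve(d')\,d$ as you claim (on the right, $\alpha(\delta(\alpha(k\ps{1})))=d\ps{1}\ve(d'\ps{2})$ and $\ve(\sigma(k\ps{2}))=\ve(d\ps{2})\ve(d'\ps{1})$); the unit and stability identities both collapse to $c$, matching your direct check $c\ns{1}\triangleright c\ns{0}=c\ps{3}\ve(c\ps{1})\ve(c\ps{2})=c$. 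So the comparison is: the paper's proof is shorter but tacitly delegates the criterion-checking to the reader, while yours trades a page of Sweedler bookkeeping for a self-contained argument; both rest on the same two pillars (the classification of group-likes and characters of $C\ot C_{\rm cop}$, and Proposition \ref{Proposition-SAYD-on-C}).
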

\begin{proof}
 By the above  characterization of right group-like morphisms of  $\mathcal{K}$ the action and coaction defined in \eqref{actcoact} reduce to
\begin{align*}
  c\longmapsto c\ps{2}\ot c\ps{3}\ot \theta(c\ps{1}), \quad (c_1\ot c_2)\triangleright c_3= c_1 \ve(c_2) \ve(c_3).
\end{align*}
 Let us see the above claim in details,
\begin{align*}
  &c\longmapsto  \alpha(c\ps{3}\ot \theta(c\ps{1}))\ot c\ps{4}\ot \theta(c\ps{2})\\
  &=c\ps{3}\ve(c\ps{1})\ot c\ps{4}\ot \theta(c\ps{2})= c\ps{2}\ot c\ps{3}\ot \theta(c\ps{1}).
\end{align*}
Using \eqref{abc44},   we see
\begin{align*}
 & (c_1\ot c_2)\triangleright c_3= \alpha(c_1\ps{1}\ot c_2\ps{2})\ve(c_3)\ve(\sigma(c_1\ps{2}\ot c_2\ps{1}))\\
 &=c_1\ps{1}\ve(c_2\ps{2})\ve(c_3)\ve(\sigma(c_1\ps{2}\ot c_2\ps{1}))=c_1\ps{1}\ve(c_3)\ve(\ve(c_1\ps{2})c_2)= c_1\ve(c_3)\ve(c_2).
\end{align*}

\end{proof}
\begin{remark}\label{groupoid2}
Let $\Gc=(\Gc^1,\Gc^0)$ be a groupoid endowed with a cyclic structure $\t$. We have seen in the Example \ref{groupoid1} that $\Bc:=\Cb\Gc^1$ is a right $\times_C$-Hopf coalgebra, where $C:=\Cb\Gc^0$. We observe  that the right module left comodule $^\t C$,  defined in  \eqref{act-group} and \eqref{coact-group} respectively,  defines a  SAYD module structure on $C$. Indeed,  the stability condition is obvious since for all $c\in C$ we have
  \begin{equation*}
    c\cdot \theta_c= \ve(c)s(\theta_c)=c.
  \end{equation*}
  Since   $c\ot g\in C\Co_C \Bc$,  using \eqref{last1} and \eqref{last2} we obtain $s(g)=c$. The following computation shows that the AYD condition holds.
  \begin{align*}
    &(c \cdot g)\ns{-1}\ot (c \cdot g)\ns{0}=\ve(c)s(g)\ns{-1}\ot s(g)\ns{0}=\ve(c)\theta_{s(g)}\ot c\\
    &= g^{-1}\theta_c g\ot \ve(c)s(g)= \nu^+(g, \theta_c)g\ot c\cdot \nu^-(g, \theta_c).
  \end{align*}
  We use   $s(g)=c$ and $\theta_c g= g \theta_{s(g)}$ in the third equality and \eqref{last3} in the last equality.
\end{remark}


\subsection{Symmetries via bicoalgebroids}
In this subsection we provide  some  preliminaries  which  are used in the sequel sections. We define  the following three symmetries for $\times$-Hopf coalgebras.
\begin{definition}
 Let  $\mathcal{K}$ be  a left $\times_{C}$-Hopf coalgebra. A left $\mathcal{K}$-comodule coalgebra $T$  is a coalgebra and a left $\mathcal{K}$-comodule such that for all  $t\in T$ the following identities hold,
\begin{align}\label{comodulecoalgebra-1}
& t\ns{-1}\ve_T( t\ns{0})= \eta(\alpha(t\ns{-1}))\ve_T( t\ns{0}),\\ \label{comodulecoalgebra-2}
 &t\ns{-1}\ot t\ns{0}\ps{1}\ot t\ns{0}\ps{2}=t\ps{1}\ns{-1} t\ps{2}\ns{-1} \ot t\ps{1}\ns{0}\ot t\ps{2}\ns{0} ,\\ \label{comodulecoalgebra-3}
 & \beta(t\ps{1}\ns{-1})\ot t\ps{1}\ns{0}\ot t\ps{2}= \alpha(t\ps{2}\ns{-1})\ot t\ps{1}\ot t\ps{2}\ns{0}.
\end{align}
\end{definition}
The equation \eqref{comodulecoalgebra-3} is stating that  the comultiplication of $T$ is $C$-cobalanced.


 Recall that a $C$-ring $A$  is a  $C$-bicomodule endowed with two $C$-bicolinear operators, the multiplication $\mathfrak{m}: A\Co_C A\longrightarrow A$ and unit map $\eta_A: C\longrightarrow A$ where the multiplication  is associative  and the unit map is unital, \ie
 \begin{equation}
   \mathfrak{m}(a\sns{0}\ot \eta_A(a\sns{1}))=a= \mathfrak{m}(\eta_A(a\sns{-1})\ot a\sns{0}).
 \end{equation}
\begin{definition}
 Let $\mathcal{B}$ be a right $\times_{C}$-Hopf coalgebra. A right $\mathcal{B}$-comodule ring $A$  is a $C$-ring and a right $\mathcal{B}$-comodule satisfying the following conditions:
\begin{align}\label{mc1}
&\eta_A(c)\nsb{0}\ot \eta_A(c)\nsb{1}= \eta_A(\alpha(\eta_{\mathcal{B}}(c)\ps{1}))\ot \eta_{\mathcal{B}}(c)\ps{2},\\\label{mc2}
&\mathfrak{m}(a_1 \ot a_2)\nsb{0}\ot \mathfrak{m}(a_1 \ot a_2)\nsb{1}=\mathfrak{m}(a_1\nsb{0} \ot a_2\nsb{0})\ot a_1\nsb{1} a_2\nsb{1}.
\end{align}
\end{definition}
As an example, $\mathcal{B}$ is a right $\mathcal{B}$-comodule ring where $\mathfrak{m}=\mu_{\mathcal{B}}$ and the right comodule structure is given by $\Delta_{\mathcal{B}}$. In this case the relations \eqref{mc1} and \eqref{mc2} are equivalent to \eqref{brzmil23} and \eqref{etaalphaeta2} in the definition of right bicoalgebroids.

\smallskip

The following definition is needed  do define   $\times$-Hopf coalgebra Galois coextensions in the last section.

\begin{definition}\label{module-coalgebra}
  Let $\mathcal{B}$ be a right $\times_C$-Hopf coalgebra and the coalgebra $T$ be a right $\Bc$-module, a $C$-bicomodule,  and a $S-C$-bicomodule such that the comultiplication of $T$ is a morphism of right $C$-comodules \ie
  $$t\ps{1}\ot t\ps{2}\sns{0}\ot t\ps{2}\sns{1}=t\sns{0}\ps{1}\ot t\sns{0}\ps{2}\ot t\sns{1}.$$
   We say $T$ is a  $\mathcal{B}$-module coalgebra  if

\begin{enumerate}
\item[i)] $t\ps{1}\sns{0}\ot t\ps{1}\sns{1}\ot t\ps{2}= t\ps{1}\ot t\ps{2}\sns{-1}\ot t\ps{2}\sns{0}, $
\item[ii)] $\ve_T(t\triangleleft b)= \ve_T (t) \ve_{\mathcal{B}}(b),$
\item[iii)]$(t \triangleleft b)\ps{1}\ot (t \triangleleft b)\ps{2}= t\ps{1}\triangleleft b\ps{1} \ot t\ps{2}\triangleleft b\ps{2} .$
\end{enumerate}
\end{definition}
The condition $(i)$ means that the comultiplication of $T$ is $C$-cobalanced. One notes that the right hand side of the condition (iii) is well-defined by the fact that the comultiplication of $T$ is a morphism of right $C$-comodules.

\smallskip
 As an example any right $\mathcal{B} $-Hopf coalgebra is a right $\mathcal{B}$-module coalgebra where the action of $\mathcal{B}$ on itself is defined by the multiplication $\mu_{\mathcal{B}}$. The conditions (i), (ii) and (iii) are equivalent to the $C$-cobalanced property of $\mathcal{B}$ which comes from $C$-bicomodule structure of $\mathcal{B}$, \eqref{var2} and \eqref{brzmil23} equivalently.


\section{Hopf cyclic cohomology of $\times$-Hopf coalgebras}
In this section we introduce the cyclic cohomology  of  comodule coalgebras and comodule rings with coefficients in SAYD modules under the symmetry of  $\times$-Hopf coalgebras.
\subsection{Cyclic cohomology of comodule coalgebras }
\label{SS-2-3}

 Let $\mathcal{K}$ be a left $\times_S$-Hopf coalgebra, $T$ a left $\mathcal{K}$-comodule coalgebra,  and $M$ a left-right SAYD module over $\mathcal{K}$. We set
$$^\mathcal{K}C^n(T, M)=M\Co_{\mathcal{K}}T^{\Co_{S}(n+1)}.$$ We define the following cofaces, codegeneracies,  and cocyclic map.
\begin{align}
\begin{split}\label{cocyclic-module}
&d_i(m\ot \widetilde{t})= m\ot t_0 \ot\cdots \ot \Delta(t_i)\ot \cdots \ot t_n, \qquad 0\leq i \leq n-1,\\
&d_n(m\ot \widetilde{t})=\\
 &t_0\ps{2}\ns{-1} t_1\ns{-1}\cdots t_n\ns{-1}\triangleright m\ot t_0\ps{2}\ns{0}\ot t_1\ns{0}\ot \cdots \ot t_n\ns{0} \ot t_0\ps{1},  \\
&s_i(m\ot \widetilde{t})= m\ot t_0 \ot\cdots \ot \ve(t_i)\ot \cdots \ot t_n,\qquad \quad \qquad 0\leq i \leq n,\\
&\mathfrak{t}_n(m\ot \widetilde{t})= t_1\ns{-1} \cdots  t_n\ns{-1}\triangleright m\ot t_1\ns{0}\ot \cdots \ot t_n\ns{0}\ot t_0,
\end{split}
\end{align}
where $\widetilde{t}=t_0\ot\cdots\ot t_n$. The left $\mathcal{K}$-comodule structure of $T^{\Co_S(n+1)}$ is given by
\begin{align}
& t_0\ot \cdots \ot t_n\longmapsto  t_0\ns{-1}\cdots  t_{n}\ns{-1}\ot t_0\ns{0}\ot t_1\ns{0}\ot \cdots \ot t_n\ns{0}
\end{align}
\begin{proposition}
 The morphisms defined in  \eqref{cocyclic-module}  turn  $^\mathcal{K}C^n(T, M)$ to a cocyclic module.
\end{proposition}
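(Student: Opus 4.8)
The plan is to verify the simplicial and cyclic identities directly, reducing everything to the structural axioms of the left $\times_S$-Hopf coalgebra $\mathcal{K}$, the comodule-coalgebra axioms \eqref{comodulecoalgebra-1}--\eqref{comodulecoalgebra-3} of $T$, and the SAYD conditions on $M$. First I would confirm that the operators $d_i, s_i, \mathfrak{t}_n$ are well defined on the cotensor product $M\Co_{\mathcal{K}}T^{\Co_S(n+1)}$: one must check that each formula respects the cotensor relations and lands in the correct cotensor space. The coface $d_i$ for $0\le i\le n-1$ and the codegeneracies $s_i$ are harmless, acting by $\Delta$ or $\ve$ in an interior slot, so their well-definedness follows from $T$ being a $\mathcal{K}$-comodule coalgebra together with the $C$-cobalanced property \eqref{comodulecoalgebra-3}. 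The subtle operators are $d_n$ and $\mathfrak{t}_n$, where the $\mathcal{K}$-coactions $t_i\ns{-1}$ are multiplied together and pushed onto $m$; here well-definedness over $\Co_{\mathcal{K}}$ uses the SAYD compatibility \eqref{SAYD} and the comodule-coalgebra condition \eqref{comodulecoalgebra-2}.

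Next I would organize the simplicial identities into groups. The relations $d_jd_i=d_id_{j-1}$ for $i<j$ split into the easy case where both indices are interior (immediate from coassociativity of $\Delta_T$) and the cases involving $d_n$, which require commuting the coaction with $\Delta_T$ via \eqref{comodulecoalgebra-2}. Likewise $s_js_i$, and the mixed $d_is_j$ relations, reduce to counitality and coassociativity in the interior slots, with the boundary cases again controlled by the comodule-coalgebra axioms. I would treat the cyclic operator $\mathfrak{t}_n$ last: the key algebraic relations to establish are $\mathfrak{t}_n d_i=d_{i-1}\mathfrak{t}_n$, $\mathfrak{t}_n s_i=s_{i-1}\mathfrak{t}_n$, the boundary compatibilities $\mathfrak{t}_n d_0=d_n$ and $\mathfrak{t}_n s_0=s_n\mathfrak{t}_n^2$, and finally the crucial $\mathfrak{t}_n^{\,n+1}=\Id$.

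The main obstacle will be the periodicity identity $\mathfrak{t}_n^{\,n+1}=\Id$. Iterating $\mathfrak{t}_n$ repeatedly cycles the entries while accumulating a product of coactions $t_i\ns{-1}$ acting on $m$; after $n+1$ applications one must show that the total accumulated $\mathcal{K}$-element acting on $m$ collapses to the identity. This is exactly where the stability condition $m\ns{1}m\ns{0}=m$ and the full strength of the AYD condition \eqref{SAYD} enter, combined with the properties of $\nu^{-1}$ from Lemma \ref{property} (especially parts iv and v) to unwind the products $\nu^{+}\nu^{-}$ that arise when transporting coactions past the action. I would carry out the computation by tracking, at each stage, which slot carries the accumulated coaction and invoking \eqref{comodulecoalgebra-2} to split off the next factor; the endgame is to recognize the resulting expression as $m\ns{1}\cdots$ acting on $m\ns{0}$ and apply stability. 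Establishing that the bookkeeping of coactions is consistent with the cotensor relations throughout this iteration is the delicate point, and I expect it to be the bulk of the genuine work, the remaining simplicial identities being routine once well-definedness is secured.
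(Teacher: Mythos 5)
Your plan is correct and its core computation is the same as the paper's, but the decomposition of the work is genuinely different. The paper's entire written proof is the well-definedness of the cyclic operator: it notes that the codegeneracies and interior cofaces are obviously well defined, disposes of the last coface via the relations of the cocyclic category (concretely $d_n=\mathfrak{t}_{n+1}\circ d_0$, so no separate check of $d_n$ is needed), and then performs one long computation showing that $\mathfrak{t}_n$ preserves $M\Co_{\mathcal{K}}T^{\Co_S(n+1)}$, using the AYD condition \eqref{SAYD}, the multiplicativity \eqref{brzmil2}, the cotensor relation over $\mathcal{K}$, the comodule property of the $t_i$, and Lemma \ref{property}(ii) to cancel the $\nu^{\pm}$ terms that \eqref{SAYD} creates; every commutation relation of the cocyclic category, including $\mathfrak{t}_n^{\,n+1}=\Id$, is explicitly left to the reader. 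You instead verify $d_n$ directly and treat the periodicity $\mathfrak{t}_n^{\,n+1}=\Id$ as the crux. Your route buys completeness: periodicity is precisely where stability $m\ns{1}\triangleright m\ns{0}=m$ (and not merely the AYD condition) is indispensable, the paper never records that verification, and your outline of the computation (iterate, trade accumulated coaction legs for coactions of $m$ through the cotensor relation, unwind the $\nu^{+}\nu^{-}$ terms via Lemma \ref{property}, finish with stability) has the right shape---indeed it is the bulk of the deferred work, since the interleaved product of coaction legs produced by naive iteration does not collapse by bookkeeping alone. The paper's route buys economy: a single computation covers all the well-definedness issues. The one caution is that the $\nu$-machinery of Lemma \ref{property} is already unavoidable in the well-definedness of $\mathfrak{t}_n$ itself, not only in the periodicity step where you place it: computing the $\mathcal{K}$-coaction of $t_1\ns{-1}\cdots t_n\ns{-1}\triangleright m$ through \eqref{SAYD} produces $\nu^{\pm}$ terms that must be removed by Lemma \ref{property}(ii) before the cotensor relation can be recognized.
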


\begin{proof}
We leave  to the reader to check that the operators satisfy the commutativity relations for a cocyclic module \cite{NCG}. However,  we check that the operators are well-defined; this is obvious  for the degeneracies and all faces except possibly the very last one. Based on the relations in the cocyclic category,  it suffices to check that  the cyclic operator is well-defined. Indeed, for $n=0$, the map $\tau$ is identity map and therefore is well-defined.

 The following computation shows that the cyclic map is well-defined in general.
\begin{align*}
&(t_1\ns{-1} \cdots  t_n\ns{-1}\triangleright m)\ns{0}\ot (t_1\ns{-1} \cdots  t_n\ns{-1}\triangleright m)\ns{1}\ot t_1\ns{0}\ot \cdots\\
&\cdots \ot t_n\ns{0}\ot t_0\\
  &=\nu^+[m\ns{1}, t_1\ns{-1}\ps{1} \cdots  t_n\ns{-1}\ps{1}]\triangleright m\ns{0}\ot t_1\ns{-1}\ps{2} \cdots t_{n-1}\ns{-1}\ps{2} t_n\ns{-1}\ps{2} \\
  &\nu^-(m\ns{1}, t_1\ns{-1}\ps{1} \cdots t_{n-1}\ns{-1}\ps{1} t_n\ns{-1}\ps{1})\ot t_1\ns{0}\ot \cdots \ot t_n\ns{0}\ot t_0\\
   &=\nu^+[t_0\ns{-1}\cdots  t_{n}\ns{-1},    t_1\ns{0}\ns{-1}\ps{1} \cdots  t_n\ns{0}\ns{-1}\ps{1}]\triangleright m\ot \\
  &t_1\ns{0}\ns{-1}\ps{2} \cdots t_{n-1}\ns{0}\ns{-1}\ps{2} t_n\ns{0}\ns{-1}\ps{2}\nu^-[t_0\ns{-1}\cdots t_{n}\ns{-1}, t_1\ns{0}\ns{-1}\ps{1} \cdots\\
  &\cdots   t_n\ns{0}\ns{-1}\ps{1}] \ot t_1\ns{0}\ns{0}\ot \cdots \ot t_n\ns{0}\ns{0}\ot t_0\ns{0}\\
   &=\nu^+[t_0\ns{-1}t_1\ns{-1}\ps{1}\cdots t_{n-1}\ns{-1}\ps{1} t_{n}\ns{-1}\ps{1}, t_1\ns{-1}\ps{2}\ps{1} \cdots \\
   &\cdots t_{n-1}\ns{-1}\ps{2}\ps{1} t_n\ns{-1}\ps{2}\ps{1}]\triangleright m\ot \\
  &t_1\ns{-1}\ps{2}\ps{2} \cdots  t_n\ns{-1}\ps{2}\ps{2}\nu^-[t_0\ns{-1}t_1\ns{-1}\ps{1}\cdots  t_{n}\ns{-1}\ps{1})), t_1\ns{-1}\ps{2}\ps{1} \cdots\\
    &\cdots t_n\ns{-1}\ps{2}\ps{1}]\ot t_1\ns{0}\ot \cdots \ot t_n\ns{0}\ot t_0\ns{0}\\
  &=\nu^+[t_0\ns{-1}t_1\ns{-1}\ps{1}\cdots t_{n}\ns{-1}\ps{1}, t_1\ns{-1}\ps{2} \cdots  t_n\ns{-1}\ps{2}]\triangleright m\ot \\
  &t_1\ns{-1}\ps{3} \cdots  t_n\ns{-1}\ps{3}\nu^-[t_0\ns{-1}t_1\ns{-1}\ps{1}\cdots t_{n}\ns{-1}\ps{1}, t_1\ns{-1}\ps{2}\cdots  t_n\ns{-1}\ps{2}] \\
  &\ot t_1\ns{0}\ot \cdots \ot t_n\ns{0}\ot t_0\ns{0}\\
  &=\nu^+[t_0\ns{-1}\ot \{t_1\ns{-1}\cdots t_{n}\ns{-1}\}\ps{1}, \{t_1\ns{-1} \cdots  t_n\ns{-1}\}\ps{2}]\triangleright m\ot \\
  &\{t_1\ns{-1} \cdots  t_n\ns{-1}\}\ps{3}\nu^-[t_0\ns{-1}\{t_1\ns{-1}\cdots  t_{n}\ns{-1}\}\ps{1}, \{t_1\ns{-1} \cdots t_n\ns{-1}\}\ps{2}] \\
  &\ot t_1\ns{0}\ot \cdots \ot t_n\ns{0}\ot t_0\ns{0}\\
  & =t_1\ns{-1}\ps{1} \cdots  t_n\ns{-1}\ps{1}\triangleright m \ot t_1\ns{-1}\ps{2} \cdots t_{n}\ns{-1}\ps{2} t_0\ns{-1}\ot t_1\ns{0}\ot \cdots \\
  &\cdots\ot t_n\ns{0}\ot t_0\ns{0}\\
  & =t_1\ns{-1} \cdots  t_n\ns{-1}\triangleright m \ot t_1\ns{0}\ns{-1} \cdots t_{n}\ns{0}\ns{-1} t_0\ns{-1}\\
 &\ot t_1\ns{0}\ns{0}\ot \cdots \ot t_n\ns{0}\ns{0}\ot t_0\ns{0}.
\end{align*}
We use AYD condition and comultiplicative property \eqref{brzmil2} in the first equality. For the second equality, we use $m\ot\widetilde{t}\in M\Co_{\mathcal{K}}T^{\Co_S(n+1)}$ that is  equivalent to \begin{align*}
&m\ns{0}\ot m\ns{1}\ot t_0\ot\cdots \ot t_n\\
&~~~~~~~~~~~~~~~~~~= m\ot t_0\ns{-1}\cdots  t_{n}\ns{-1}\ot t_0\ns{0}\ot t_1\ns{0}\cdots \ot t_n\ns{0}.
\end{align*}
We use the comodule property for the elements $t_1, \cdots, t_n$ in the third equality, comultiplicative property \eqref{brzmil2} in the fifth equality, Lemma \eqref{property}(ii) in the sixth equality and the comodule property for the elements $t_1, \cdots, t_n$ in the last equality.
\end{proof}
The cyclic cohomology of the preceding  cocyclic module will be denoted by $^{\mathcal{K}}HC^*(T, M)$.
It should be noted that even for the case  $\Kc:=H$ is merely a Hopf algebra and $T=H$ coacting on itself via adjoint coaction, the above cocyclic  module did not appear in the literature. Let us simplify  the above cocyclic module in  the case of right $\times$-Hopf coalgebras for Hopf algebras.
We recall that for a Hopf algebra $H$ a right comodule coalgebra $C$ is a right $H$-comodule and a coalgebra such that
\begin{equation*}
   c\ns{0}\ps{1}\ot c\ns{0}\ps{2}\ot c\ns{1}=  c\ps{1}\ns{0} \ot c\ps{2}\ns{0}\ot c\ps{1}\ns{1} c\ps{2}\ns{1}, \quad \ve(c\ns{0})a\ns{1}=\ve(a)1_H.
\end{equation*}
As an example, $H$ is a right $H$-comodule coalgebra by the coadjoint coaction $h\longmapsto h\ps{2}\ot S(h\ps{1})h\ps{3}$.

\begin{proposition}
  Let  $H$ be a Hopf algebra, $C$ a right $H$-comodule coalgebra,  and $M$ a right-left SAYD module on $H$. We set
   \begin{equation}\label{module02}
     ^{H}\mathcal{C}^{n}(C, M)= C^{\ot(n+1)}\Co_H M.
   \end{equation}
   The following cofaces, codegeneracies,  and cocyclic maps define a cocyclic module for $^{H}C^{n}(\mathcal{C}, M)$.
   \begin{align}\label{cocyclic02}\nonumber
    &\d_i( c_0\ot \cdots \ot c_n\ot m)=  c_0\ot\cdots \ot \Delta(c_i)\ot \cdots\ot c_n\ot m,\\ \nonumber
    &\d_n( c_0\ot \cdots \ot c_n\ot m)=c_0\ps{2}\ot c_1\ot\cdots\ot c_n\ot c_0\ps{1}\ns{0}\ot m\triangleleft c_0\ps{1}\ns{1},\\ \nonumber
    &\s_i( c_0\ot \cdots \ot c_n\ot m)= c_0\ot\cdots \ot \varepsilon(c_{i+1})\ot \cdots\ot c_n\ot m,\\ \nonumber
    &\tau_n( c_0\ot \cdots \ot c_n\ot m)= c_1\ot \cdots \ot c_n\ot c_0\ns{0}\ot m\triangleleft c_0\ns{1}.
   \end{align}
\end{proposition}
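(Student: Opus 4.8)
The plan is to verify that the operators in \eqref{cocyclic02} satisfy the defining relations of a cocyclic module, and the honest work lies entirely in checking that $\d_n$ and $\tau_n$ are well-defined on the cotensor product $C^{\ot(n+1)}\Co_H M$ and that $\tau_n^{n+1}=\Id$. First I would observe that this Proposition is the exact right-handed, Hopf-algebra specialization of the cocyclic module constructed in the preceding Proposition for left $\times_S$-Hopf coalgebras. The clean strategy is therefore \emph{not} to re-derive everything from scratch, but to instantiate the earlier general computation: take $\mathcal{B}$ to be the right $\times_{\Cb}$-Hopf coalgebra attached to the Hopf algebra $H$ as in Example \ref{example-a}, where by that example $\nu^{-1}(b\ot b')=b\ps{1}\ot S(b\ps{2})b'$, so that $\nu^-(b\ot b')=b\ps{1}$ and $\nu^+(b\ot b')=S(b\ps{2})b'$. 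Over the base $\Cb$ every cotensor product $\Co_H$ is just the genuine cotensor product of $H$-comodules and the relation $\Co_C$ collapses to $\ot$, which is why the complicated iterated coactions of the general statement reduce to the single $H$-coactions appearing in \eqref{cocyclic02}.

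With that dictionary in hand I would first record the comodule and SAYD data explicitly: $C^{\ot(n+1)}$ carries the diagonal right $H$-coaction $c_0\ot\cdots\ot c_n\mapsto c_0\ns{0}\ot\cdots\ot c_n\ns{0}\ot c_0\ns{1}\cdots c_n\ns{1}$, and membership in $C^{\ot(n+1)}\Co_H M$ means
\begin{equation*}
c_0\ns{0}\ot\cdots\ot c_n\ns{0}\ot m\ot c_0\ns{1}\cdots c_n\ns{1}=c_0\ot\cdots\ot c_n\ot m\ns{0}\ot m\ns{-1}.
\end{equation*}
Then I would check well-definedness of $\tau_n$: applying $\tau_n$ lands an expression in $C^{\ot n}\ot C\ot M$ whose $H$-balancing must again hold, and this is precisely the content of the general well-definedness computation in the previous Proposition, where the AYD condition \eqref{SAYD2} is invoked to move the coaction $c_0\ns{1}$ past the action on $M$. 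Substituting $\nu^-(b\ot b')=b\ps{1}$, $\nu^+(b\ot b')=S(b\ps{2})b'$ turns the abstract $\nu^\pm$ identities into the ordinary Hopf-algebra antipode manipulations; the right-left SAYD condition becomes the familiar $(m\triangleleft b)\ns{-1}\ot(m\triangleleft b)\ns{0}=S(b\ps{1})m\ns{-1}b\ps{3}\ot m\ns{0}\triangleleft b\ps{2}$, and well-definedness of $\d_n$ follows the same pattern.

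The remaining relations $\d_i\d_j=\d_{j+1}\d_i$ for $i\le j$, the simplicial identities involving the $\s_i$, and the cocyclic compatibilities $\tau_n\d_i=\d_{i-1}\tau_n$, $\tau_n\s_i=\s_{i-1}\tau_n$, together with the crucial $\tau_n^{\,n+1}=\Id$, are routine: the coface and codegeneracy relations among $\d_0,\dots,\d_{n-1}$ and $\s_0,\dots,\s_n$ hold because they only repackage coassociativity and counitality of $C$, and the relations mixing in $\d_n$ and $\tau_n$ follow by the same AYD-and-antipode bookkeeping once the comodule-coalgebra compatibility of $C$ is used to commute the coaction past $\Delta_C$. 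I expect the one genuinely delicate point to be $\tau_n^{\,n+1}=\Id$: each application of $\tau_n$ both cyclically permutes the tensor factors and transports the trailing coaction $c_0\ns{1}$ into an action on $M$, so closing the cycle requires that, after $n+1$ steps, the accumulated actions on $M$ telescope via the stability condition $m\ns{0}\triangleleft m\ns{-1}=m$ back to the identity. I would prove this by induction on the number of applications, tracking the accumulated antipode-weighted coaction and invoking stability exactly once at the end; this is the step where the full strength of the SAYD axioms (not merely anti-Yetter-Drinfeld, but stability) is actually needed, so I would flag it as the main obstacle and treat it with care rather than leaving it to the reader.
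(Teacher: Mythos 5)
Your strategy coincides with the paper's own: the paper offers no independent proof of this Proposition, presenting it precisely as the right-handed, base-field specialization of the general cocyclic module for $\times$-Hopf coalgebras, with $H$ viewed as a right $\times_{\Cb}$-Hopf coalgebra as in Example \ref{example-a} and $\Co_{\Cb}$ collapsing to $\ot$; your identification of the two points carrying actual content (well-definedness of $\d_n$ and $\tau_n$ on the cotensor product via the AYD condition, and $\tau_n^{\,n+1}=\Id$ via a single application of stability after the coaction legs telescope through the cotensor relation) is also exactly right.

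There is, however, one concrete error you must fix before the verification goes through: your specialized AYD condition has the antipode on the wrong leg. From $\nu^-(b\ot b')=b\ps{1}$, $\nu^+(b\ot b')=S(b\ps{2})b'$ and the paper's condition \eqref{SAYD2}, namely $(m\triangleleft b)\ns{-1}\ot(m\triangleleft b)\ns{0}=\nu^+(b\ps{2},m\ns{-1})\,b\ps{1}\ot m\ns{0}\triangleleft\nu^-(b\ps{2},m\ns{-1})$, the substitution yields
\begin{equation*}
(m\triangleleft b)\ns{-1}\ot(m\triangleleft b)\ns{0}=S(b\ps{3})\,m\ns{-1}\,b\ps{1}\ot m\ns{0}\triangleleft b\ps{2},
\end{equation*}
not $S(b\ps{1})\,m\ns{-1}\,b\ps{3}\ot m\ns{0}\triangleleft b\ps{2}$ as you wrote. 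The difference is not cosmetic. In checking that $\tau_n$ lands back in $C^{\ot(n+1)}\Co_H M$, one applies the AYD identity to $m\triangleleft c_0\ns{1}$ and then uses the cotensor relation to replace $m\ns{-1}\ot m\ns{0}$ by $c_0\ns{1}c_1\ns{1}\cdots c_n\ns{1}\ot m$; after renumbering the iterated coactions on $c_0$ this produces the factor $S(c_0\ns{3})\,c_0\ns{4}\,c_1\ns{1}\cdots c_n\ns{1}\,c_0\ns{1}$, and the computation closes only because $S(c_0\ns{3})c_0\ns{4}=\ve(c_0\ns{3})1_H$ collapses by the antipode axiom, leaving exactly the diagonal coaction $c_1\ns{1}\cdots c_n\ns{1}c_0\ns{1}$ of the cyclically permuted tuple. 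With your ordering the antipode factor $S(c_0\ns{1})$ is not adjacent to the fresh coaction leg, nothing cancels, and the well-definedness check fails (the same correction is needed in your treatment of $\d_n$ and in the telescoping argument for $\tau_n^{\,n+1}=\Id$). Since you derived the formula from the very substitution you set up correctly, the fix is mechanical, but as written that displayed identity is false.
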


The cyclic cohomology of the preceding cyclic module is denoted by $^{H}HC^{*}(C,M)$.


\subsection{Cyclic cohomology of comodule rings}

 In this subsection we define the cyclic cohomology of comodule rings with coefficients in a SAYD module and the symmetry endowed by a $\times$-Hopf coalgebra.

Let $\Bc$ be a right $\times_C$-Hopf coalgebra,  $A$ be a right $\Bc$-comodule ring, and $M$ be a  right-left SAYD module over $\Bc$.
We set
\begin{equation}
\widetilde{C}^{ \mathcal{B},n}(A, M):=A^{\Co_{C}(n+1)}\Co_{\mathcal{B}}M,
\end{equation}
and define the  following operators on  $\widetilde{C}^{ \mathcal{B},n}(A, M)$.
\begin{align}
\begin{split}\label{module-ring-cocyclic}
&\delta_{i}(\td a\otimes m)= a_0\ot \cdots \ot \eta(a_i\sns{-1})\ot a_i\sns{0}  \ot \cdots\ot a_n \ot m, \quad 0\leq i \leq n,\\
& \s_i(\td a\otimes m)= a_0\ot \cdots \ot \mathfrak{m}(a_i \ot a_{i+1})\ot  \cdots\ot a_n \ot m, \qquad \quad 0\leq i \leq n,\\
&\tau_{n}(\td a\otimes m)= a_1\ot \cdots \ot a_n\ot a_0\nsb{0}\ot m\triangleleft a_0\nsb{1},
\end{split}
\end{align}
where $\td a= a_0\ot \cdots \ot a_n$. The right $\mathcal{B}$-comodule structure of  $A^{\Co_{C}(n+1)}$ is given by
\begin{equation*}
  \td a \longmapsto a_0\nsb{0}\ot \cdots \ot a_n\nsb{0}\ot a_0\nsb{1} \cdots a_n\nsb{1}.
\end{equation*}
\begin{proposition}\label{2way}
  Let $\mathcal{B}$ be a right $\times_C$-Hopf coalgebra, $M$ be a right-left SAYD module for $\mathcal{B}$ and $A$ be a right $\mathcal{B}$-comodule $C$-ring.
   Then the operators presented in \eqref{module-ring-cocyclic} define  a cocyclic module structure on $\widetilde{C}^{ \mathcal{B},n}(A, M)$.
\end{proposition}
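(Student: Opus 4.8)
The plan is to verify the cocyclic module axioms for the operators in \eqref{module-ring-cocyclic} in the same spirit as the preceding comodule coalgebra proposition: first confirm that each operator is well defined on the cotensor product $\widetilde{C}^{\mathcal{B},n}(A,M)=A^{\Co_C(n+1)}\Co_{\mathcal{B}}M$, and then reduce the full list of relations to the single delicate identity $\tau_n^{n+1}=\Id$, delegating the purely cosimplicial bookkeeping to the reader.

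First I would treat the cofaces and codegeneracies. For $\delta_i$, replacing $a_i$ by $\eta(a_i\sns{-1})\ot a_i\sns{0}$ preserves the $C$-balancing of the neighbouring factors because $\eta$ is $C$-bicolinear as part of the $C$-ring structure of $A$, and it preserves the $\Co_{\mathcal{B}}$-balancing against $M$ by the comodule ring compatibility \eqref{mc1}. For $\sigma_i$, the map $\mathfrak{m}\colon A\Co_C A\to A$ is $C$-bicolinear and, by \eqref{mc2}, $\mathcal{B}$-colinear, so it descends to both cotensor products. The relations among the $\delta_i$ and $\sigma_i$ are then formally dual to the simplicial identities for the cyclic module of an algebra and follow from associativity and unitality of $(\mathfrak{m},\eta)$; as in the comodule coalgebra proof I would state these and leave the routine verification to the reader.

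The heart of the argument is the cocyclic operator $\tau_n$. I would first show that $\tau_n$ maps $\widetilde{C}^{\mathcal{B},n}(A,M)$ into itself. For the output $a_1\ot\cdots\ot a_n\ot a_0\ns{0}\ot m\triangleleft a_0\ns{1}$ both the new $C$-balancing among the $A$-factors and the $\Co_{\mathcal{B}}$-balancing against $m\triangleleft a_0\ns{1}$ have to be checked; the latter is where the anti-Yetter-Drinfeld condition \eqref{SAYD2} enters, relating the left $\mathcal{B}$-coaction of $m\triangleleft a_0\ns{1}$ to the coaction data already carried by $a_0$. This is precisely the mirror, with the roles of module and comodule exchanged, of the well-definedness computation carried out for the comodule coalgebra case, so the same manipulation of \eqref{SAYD2} together with the comodule ring coaction formula applies. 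The compatibilities of $\tau_n$ with the $\delta_i$ and $\sigma_i$, namely $\tau_n\delta_i=\delta_{i-1}\tau_{n-1}$ for $1\le i\le n$ together with $\tau_n\delta_0=\delta_n$, and the corresponding $\sigma$-relations, then follow by tracking how the cyclic shift interacts with insertion and multiplication, using the multiplicativity \eqref{mc2} of the $\mathcal{B}$-coaction.

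The main obstacle is the cyclicity relation $\tau_n^{n+1}=\Id$. Iterating $\tau_n$ carries each $a_i$ in turn to the tail while accumulating onto $m$ the right $\mathcal{B}$-action of the successive coaction components; after $n+1$ steps every factor has returned to its original position and $m$ has been acted on by its total accumulated coaction. I expect the key step to be reorganizing these accumulated actions and coactions by repeated use of \eqref{SAYD2}, so that the net effect on $m$ becomes precisely $m\ns{0}\triangleleft m\ns{-1}$; the stability hypothesis $m\ns{0}\triangleleft m\ns{-1}=m$ then collapses this to the identity. The iterated coaction bookkeeping, and the correct placement of the anti-Yetter-Drinfeld rewriting within it, is the delicate part of the proof.
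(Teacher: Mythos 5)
Your proposal follows essentially the same route as the paper: the paper likewise dismisses the cocyclic commutativity relations as routine and concentrates on well-definedness, its only displayed computation being precisely the one you identify for $\tau_n$, using the $\Co_{\mathcal{B}}$-balancing of $\td a\ot m$, the AYD condition \eqref{SAYD2}, and Lemma \ref{property2}(ii) (the paper even streamlines your coface step by observing that, thanks to the relations in the cocyclic category, well-definedness of the remaining coface follows from that of $\tau_n$ and the others). The one imprecision is in your sketch of $\tau_n^{n+1}=\Id$, which the paper leaves entirely to the reader: no repeated use of \eqref{SAYD2} is needed there, since after $n+1$ iterations one obtains $a_0\ns{0}\ot\cdots\ot a_n\ns{0}\ot m\triangleleft\left(a_0\ns{1}\cdots a_n\ns{1}\right)$, and the defining relation of $A^{\Co_C(n+1)}\Co_{\mathcal{B}}M$ rewrites this as $a_0\ot\cdots\ot a_n\ot m\ns{0}\triangleleft m\ns{-1}$, which the stability hypothesis collapses to the identity.
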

\begin{proof}
It is straightforward  to check that the operators satisfy the commutativity relations for a cocyclic module \cite{NCG}. However,  we check that the operators are well-defined; using comodule ring conditions \eqref{mc1} and \eqref{mc2} one proves that all cofaces except possibly the last one  are well-defined. This is readily seen  for the codegeneracies. Based on the relations in the cocyclic category,  it suffices to check that  the cyclic operator is well-defined.
If $n=0$, then $\tau=\Id$ and it is obviously well-defined. The following computation  shows that $\tau(\widetilde{a} \ot_{\mathcal{B}} m)  \in  \widetilde{C}^{ \mathcal{B},n}(A, M)$.

 \begin{align*}
 &  a_1\ot \cdots \ot a_n\ot a_0\nsb{0}\ot (m\triangleleft a_0\nsb{1})\nsb{-1}\ot  (m\triangleleft a_0\nsb{1})\nsb{0}\\
 &=a_1\ot \cdots \ot a_n\ot a_0\nsb{0}\ot \nu^+(a_0\nsb{1}\ps{2}, m\ns{-1}) a_0\nsb{1}\ps{1}\\
 &\ot m\nsb{0}\triangleleft \nu^-(a_0\nsb{1}\ps{2}, m\nsb{-1})\\
 &=a_1\nsb{0}\ot \cdots \ot a_n\nsb{0}\ot a_0\nsb{0}\nsb{0}\ot \nu^+(a_0\nsb{0}\nsb{1}\ps{2}, a_0\nsb{1}\cdots\\
 &\cdots a_n\nsb{1}) a_0\nsb{0}\nsb{1}\ps{1}\ot m\triangleleft \nu^-(a_0\nsb{0}\nsb{1}\ps{2}, a_0\nsb{1}\cdots a_n\nsb{1})\\
 &=a_1\nsb{0}\ot \cdots \ot a_n\nsb{0}\ot a_0\nsb{0}\ot \nu^+(a_0\ns{1}\ps{2}, a_0\nsb{1}\ps{3}a_1\nsb{1}\cdots \\
 &\cdots a_n\nsb{1}) a_0\nsb{1}\ps{1}\ot m\triangleleft \nu^-(a_0\nsb{1}\ps{2}, a_0\nsb{1}\ps{3}a_1\nsb{1}\cdots a_n\nsb{1})\\
 &=a_1\nsb{0}\ot \cdots \ot a_n\nsb{0}\ot a_0\nsb{0}\ot a_1\nsb{1}\cdots a_n\nsb{1} a_0\nsb{1}\ps{1}\ot m\triangleleft a_0\nsb{1}\ps{2}\\
 &=a_1\nsb{0}\ot \cdots \ot a_n\nsb{0}\ot a_0\nsb{0}\nsb{0}\ot a_1\nsb{1}\cdots a_n\nsb{1} a_0\nsb{0}\nsb{1}\ot m\triangleleft a_0\nsb{1}.
  \end{align*}
  We use the AYD condition \eqref{SAYD2} in the first equality,  the fact that $\widetilde{a}\ot m\in \widetilde{C}^{ \mathcal{B},n}(A, M)$  in the second equality,  and Lemma \eqref{property2}(ii) in the fourth equality.

\end{proof}
We denote the cyclic cohomology of this cocyclic module by $\widetilde{HC}^{\mathcal{B},*}(A, M)$. It is clear that  $\mathcal{B}$ is a right $\mathcal{B}$-comodule $C$-ring by considering the comultiplication $\Delta_{\mathcal{B}}$ as the coaction and $\mu=\mathfrak{m}$. One  defines  the following map
\begin{align}\label{fi}\nonumber
 & \varphi_n:   \widetilde{C}^{ \mathcal{B},n}(\mathcal{B}, M)\longrightarrow \mathcal{B}^{\Co_{C}n}\Co_{C_{{\rm cop}}} M ,\\
  &\varphi_n(b_0\ot \cdots \ot b_n\ot m)=b_0\ot\cdots \ot b_{n-1}\ot \ve_{\mathcal{B}}(b_{n})\ot m.
\end{align}
The right $C_{{\rm cop}}$-comodule structure of $\mathcal{B}^{\Co_{C}n}$ is given by
\begin{equation}
  b_1\ot\cdots \ot b_n\longmapsto b_1\ps{1}\ot b_2\ot \cdots \ot b_n\ot \beta(b_1\ps{2}),
\end{equation}
 and the left $C_{{\rm cop}}$-comodule structure of $M$ by  $m\longmapsto \beta(m\nsb{-1})\ot m\nsb{0}$.
\begin{proposition}
  The map $\varphi_n$ defined in \eqref{fi} is a well-defined isomorphism of vector spaces.
\end{proposition}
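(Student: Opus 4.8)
The plan is to treat well-definedness and bijectivity separately, the bijectivity resting entirely on the invertibility of the Galois map $\nu$.

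\emph{Well-definedness.} An element $\sum b_0\ot\cdots\ot b_n\ot m$ of $\Bc^{\Co_C(n+1)}\Co_{\Bc}M$ satisfies, by definition of the cotensor over $\Bc$ (equating the diagonal right $\Bc$-coaction with the left $\Bc$-coaction on $M$),
\begin{equation*}
\sum b_0\ps{1}\ot\cdots\ot b_n\ps{1}\ot(b_0\ps{2}\cdots b_n\ps{2})\ot m=\sum b_0\ot\cdots\ot b_n\ot m\nsb{-1}\ot m\nsb{0}.
\end{equation*}
I would apply $\Id^{\ot n}\ot\ve\ot\beta\ot\Id_M$, i.e. counit on the leg $b_n\ps{1}$ and $\beta$ on the $\Bc$-valued coaction leg. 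The counit merges $b_n\ps{2}$ back into the product, and Lemma \ref{rightbi}(iv), $\beta(bb')=\beta(b)\ve(b')$, collapses $\beta(b_0\ps{2}\cdots b_{n-1}\ps{2}b_n)$ to $\beta(b_0\ps{2})\ve(b_1\ps{2})\cdots\ve(b_{n-1}\ps{2})\ve(b_n)$; reabsorbing the counits via $\ve(b_i\ps{2})b_i\ps{1}=b_i$ yields exactly the defining relation of $\Bc^{\Co_C n}\Co_{C_{{\rm cop}}}M$ for $\varphi_n(\sum b_0\ot\cdots\ot b_{n-1}\ve(b_n)\ot m)$. The inner $\Co_C$-relations among $b_0,\dots,b_{n-1}$ are left untouched, so $\varphi_n$ indeed lands in the codomain.

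\emph{Bijectivity.} The naive guess ``insert the $\Bc$-coaction $m\mapsto m\nsb{-1}\ot m\nsb{0}$ into the vacated slot'' fails: already on a group coalgebra $b_0\ot m\nsb{-1}\ot m\nsb{0}$ violates the diagonal relation in general. What is missing is the division of $m\nsb{-1}$ by the product $b_0\cdots b_{n-1}$, and this is precisely what the antipode $\nu^{-1}$ supplies. Concretely I would realise $\varphi_n^{-1}$ as the composite
\begin{equation*}
\Bc^{\Co_C n}\Co_{C_{{\rm cop}}}M\xrightarrow{\;\rho_M\;}\big(\Bc^{\Co_C n}\Co_{C_{{\rm cop}}}\Bc\big)\Co_{\Bc}M\xrightarrow{\;\Theta^{-1}\Co\Id\;}\Bc^{\Co_C(n+1)}\Co_{\Bc}M,
\end{equation*}
where the first arrow inserts the coaction and uses the counit isomorphism $\Bc\Co_{\Bc}M\cong M$ together with cotensor associativity, and $\Theta\colon\Bc^{\Co_C(n+1)}\xrightarrow{\sim}\Bc^{\Co_C n}\Co_{C_{{\rm cop}}}\Bc$ is the untwisting isomorphism converting the diagonal right $\Bc$-coaction into the coaction on the last factor alone. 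The map $\Theta$ is built by iterating $\nu$ (pushing the successive multiplications into the last slot), so its bijectivity is exactly that of $\nu$; that $\Theta^{-1}\Co\Id$ is defined on the cotensor follows from the $\Bc$-comodule map property \eqref{b-comodule-nu} of $\nu^{-1}$.

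\emph{Verification of the composites.} The identity $\varphi_n\circ\varphi_n^{-1}=\Id$ is the easy one: applying $\ve$ to the last factor annihilates the untwisting (Lemma \ref{property2}(iii)) and returns $\ve(m\nsb{-1})m\nsb{0}=m$. The substantive identity is $\varphi_n^{-1}\circ\varphi_n=\Id$: one starts from $\sum b_0\ot\cdots\ot b_{n-1}\ve(b_n)\ot m$, reinserts the coaction, and must recover the original $(n+1)$-fold tensor. This is where the domain relation displayed above is invoked to rewrite $m\nsb{-1}$ as the diagonal product $b_0\ps{2}\cdots b_n\ps{2}$, after which Lemma \ref{property2}(ii), the identity $\nu^{-1}\nu=\Id$, exactly undoes $\Theta$. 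I expect the careful construction and colinearity of the iterated untwisting $\Theta$, together with this final cancellation, to be the main obstacle; all remaining steps are counit bookkeeping.
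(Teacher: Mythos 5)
Your well-definedness argument for $\varphi_n$ is the same as the paper's, and the mechanism behind your inverse is also the paper's: insert the $\Bc$-coaction of $M$ and let $\nu^{-1}$ divide out the product of the other legs. Indeed, unwinding your composite $(\Theta^{-1}\Co\Id)\circ\rho_M$ yields exactly the explicit formula that the paper takes as the definition of $\varphi_n^{-1}$, namely
\begin{align*}
&\varphi_n^{-1}(b_1\ot\cdots\ot b_n\ot m)\\
&\quad=b_1\ps{1}\ot\cdots\ot b_n\ps{1}\ot
\ve\left[\nu^{-}\left(b_1\ps{2}\cdots b_n\ps{2},\,m\nsb{-1}\right)\right]\nu^{+}\left(b_1\ps{2}\cdots b_n\ps{2},\,m\nsb{-1}\right)\ot m\nsb{0},
\end{align*}
and your sketches of the two composite identities match the paper's computations: Lemma \ref{property2}(iii), \eqref{var2} and counitality give $\varphi_n\varphi_n^{-1}=\Id$, while the cotensor relation over $\Bc$ together with Lemma \ref{property2}(ii) gives $\varphi_n^{-1}\varphi_n=\Id$.

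The gap is in your treatment of $\Theta$. The map you need is $b_0\ot\cdots\ot b_n\mapsto b_0\ps{1}\ot\cdots\ot b_{n-1}\ps{1}\ot (b_0\ps{2}\cdots b_{n-1}\ps{2})b_n$, and it is \emph{not} produced by ``iterating $\nu$'': applying $\nu$ successively to adjacent cotensor legs gives $b_0\ps{1}\ot b_0\ps{2}b_1\ps{1}\ot b_0\ps{3}b_1\ps{2}b_2\ps{1}\ot\cdots$, with products accumulating in every slot, landing in $\Bc\Co_{C_{\rm cop}}\cdots\Co_{C_{\rm cop}}\Bc$ rather than in $\Bc^{\Co_C n}\Co_{C_{\rm cop}}\Bc$. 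The one adjacent application that does land in the correct space, $\Id^{\Co_C (n-1)}\Co\,\nu$ ($\nu$ on the last two legs only), is bijective but does not convert the diagonal right $\Bc$-coaction into the last-factor coaction, so with that choice $\Theta\Co_{\Bc}\Id_M$ is undefined and the composite does not invert $\varphi_n$. To build the genuine $\Theta$ by elementary steps you must apply $\nu$ between non-adjacent legs, and such ``$\nu_{13}$-type'' maps are not automatically well defined on cotensor products; note that the paper has to justify the analogous map $can_{13}$ separately in the proof of Lemma \ref{canproperty}(x). (A smaller point: the colinearity you need for $\Theta$ is right $\Bc$-colinearity, which follows from multiplicativity \eqref{brzmil23}, not from the left colinearity \eqref{b-comodule-nu} you cite.) So, as written, the bijectivity of $\Theta$ --- which is the entire content of the proposition --- is asserted rather than proved. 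The repair is precisely the paper's route: take the displayed formula above as the definition of $\varphi_n^{-1}$ and verify the two composites directly; your verification sketches then go through verbatim.
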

\begin{proof} Since $b_0\ot \cdots \ot b_n\ot m\in \widetilde{C}^{ \mathcal{B},n}(\mathcal{B}, M)$, we have
\begin{equation}\label{fff}
  b_0\ps{1}\ot\cdots \ot b_n\ps{1}\ot b_0\ps{2}\cdots b_n\ps{2}\ot m= b_0\ot \cdots \ot b_n\ot m\nsb{-1}\ot m\nsb{0}.
\end{equation}
  We  prove that the map $\varphi$ is well-defined. Indeed,
\begin{align*}
  &b_0\ot \cdots \ot b_{n-1}\ve(b_n)\ot \beta(m\nsb{-1})\ot m\nsb{0}\\
  &=b_0\ps{1}\ot \cdots \ot b_{n-1}\ps{1}\ve(b_n\ps{1})\ot \beta(b_0\ps{2}\cdots  b_n\ps{2})\ot m\\
  &=b_0\ps{1}\ot \cdots \ot b_{n-1}\ps{1}\ve(b_n\ps{1})\ot \beta(b_0\ps{2})\ve(b_1\ps{2})\cdots \ve(b_n\ps{2})\ot m\\
  &=b_0\ps{1}\ot b_1\ot\cdots\ot b_{n-1}\ve(b_n)\ot \beta (b_0\ps{2})\ot m.
\end{align*}
The equation \eqref{fff} is used in the first equality, and \eqref{rightbi}(iv) and \eqref{var2} are used in the second equality. The coalgebra properties of $\mathcal{B}$ is used in the last equality.
We claim that the following map defines a two sided inverse map of $\varphi$.
\begin{align}
  &\varphi^{-1}_n(b_1\ot \cdots \ot b_n\ot m)=  b_1\ps{1}\ot\cdots \ot b_n\ps{1}\ot \\
  &\ve[\nu^{-}(b_1\ps{2}\cdots b_n\ps{2}, m\nsb{-1})] \nu^{+}(b_1\ps{2}\cdots b_n\ps{2},m\nsb{-1}) \ot m\nsb{0}.
\end{align}
 By using Lemma \eqref{property2}(iii),  the relation \eqref{var2}, and the counital property of the coaction, one  observes  $\varphi \varphi^{-1}= Id$.
We check   $\varphi^{-1} \varphi=Id$,
\begin{align*}
  &\varphi^{-1}(\varphi_n(b_0\ot \cdots \ot b_n\ot m))= \varphi^{-1}(b_0\ot\cdots \ot b_{n-1}\ve_{\mathcal{B}}(b_{n})\ot m)\\
  &=b_0\ps{1}\ot\cdots \ot b_{n-2}\ps{1}\ot b_{n-1}\ps{1}\ve(b_n)\ot \\
  &\ve[\nu^{-}(b_0\ps{2}\cdots\ot b_{n-1}\ps{2}, m\ns{-1})] \nu^{+}(b_0\ps{2}\cdots b_{n-1}\ps{2}),m\ns{-1}) \ot m\ns{0}\\
  &=b_0\ps{1}\ot\cdots \ot b_{n-2}\ps{1}\ot b_{n-1}\ps{1}\ve(b_n\ps{1})\ot \\
  &\ve[\nu^{-}(b_0\ps{2}\cdots b_{n-1}\ps{2}, b_0\ps{3}\cdots b_{n-1}\ps{3} b_n\ps{2})] \\
  &\nu^{+}(b_0\ps{2}\cdots b_{n-1}\ps{2}, b_0\ps{3}\cdots b_{n-1}\ps{3} b_n\ps{2})\ot m\\
  &=b_0\ps{1}\ot\cdots \ot b_{n-2}\ps{1}\ot b_{n-1}\ps{1}\ot \ve(b_0\ps{2})\cdots \ve(b_{n-1}\ps{2}) b_n\ot m\\
  &= b_0\ot \cdots \ot b_n\ot m.
\end{align*}
We use \eqref{fff} in the third equality. For  the fourth equality we use   $\ve(b_n\ps{1})b_n\ps{2}=b_n$, the relation \eqref{var2},  and Lemma \eqref{property2}(ii).  This  also  proves that $\varphi^{-1}$ is well-defined.
\end{proof}

Therefore we obtain the following operators by transferring   the cocyclic structure of $\widetilde{C}^{ \mathcal{B},n}(\mathcal{B}, M)$ on  $\mathcal{B}^{\Co_{C}n}\Co_{C_{{\rm cop}}} M$ .
\begin{align}\label{dual-cocyclic-module}
\begin{split}
 &\delta_{i}(\widetilde{b}\ot m)= b_1\ot \cdots \ot \eta(b_i\sns{-1})\ot b_i\sns{0}  \ot \cdots\ot b_n \ot m,\\
 &\delta_{n}(\widetilde{b}\ot m)= b_1\ps{1}\ot\cdots \ot b_n\ps{1}\ot \\
  &\ve[\nu^{-}(b_1\ps{2}\cdots b_n\ps{2}, m\nsb{-1})] \nu^{+}(b_1\ps{2}\cdots b_n\ps{2}, m\nsb{-1}) \ot m\nsb{0}.\\
 &\s_{i}(\widetilde{b}\ot m)= b_1\ot \cdots \ot b_i  b_{i+1}\ot  \cdots\ot b_n \ot m\\
 &\s_{n}(\widetilde{b}\ot m)=b_1\ot \cdots \ot b_{n-1}\ve(b_n)  \\
 &\tau_{n}(\widetilde{b}\ot m)= b_2\ps{1}\ot\cdots\ot b_n\ps{1}\ot \\
 &\ve[\nu^-(b_1\ps{2}\cdots  b_n\ps{2}, m\nsb{-1})]\nu^+(b_1\ps{2}\cdots  b_n\ps{2}, m\nsb{-1})\ot m\nsb{0}\triangleleft b_1\ps{1}
 \end{split}
\end{align}
where $\widetilde{b}= b_1\ot\cdots \ot b_n$.

\medskip

\section{Equivariant Hopf Galois coextensions}
In this section we define  equivariant Hopf Galois coextensions of $\times$-Hopf coalgebras;  that is a quadruple $(\mathcal{K}, \mathcal{B}, T, S)$ satisfying ceratin properties(  Definition \ref{equ}). We show that any equivariant Hopf Galois coextension defines a functor from the category of SAYD modules over $\mathcal{K}$ to the category of SAYD modules over $\mathcal{B}$ such that their Hopf cyclic complexes with corresponding coefficients  are isomorphic.
\subsection{Basics of equivariant Hopf Galois coextensions}

Let $\mathcal{B}$ be a right $\times_C$-Hopf coalgebra and $T$ be a right $\mathcal{B}$-module coalgebra as defined in \ref{module-coalgebra}. We set
$$I= \{ t\triangleleft b- \ve(b)t, \quad b \ot t \in\mathcal{B}\Co_C T \}.$$
One observes that  $I$ is a coideal of $T$. To see this we show  $\Delta(I)\subseteq I\ot T\ot T\ot I$. In fact for any $t\triangleleft b-\varepsilon(b)t\in I$ we have
\begin{align*}
  &\Delta(t\triangleleft b-\varepsilon(b)t)=t\ps{1}\triangleleft b\ps{1}\ot t\ps{2}\triangleleft b\ps{2}-\varepsilon(b)t\ps{1}\ot t\ps{2}\\
  &=t\ps{1}\triangleleft b\ps{1}\ot t\ps{2}\triangleleft b\ps{2}-\varepsilon(b)t\ps{1}\ot t\ps{2}-\varepsilon(b\ps{1})t\ps{1}\ot t\ps{2}\triangleleft b\ps{2}+ t\ps{1}\ot t\ps{2}\triangleleft b\\
  &=(t\ps{1}\triangleleft b\ps{1}-\varepsilon(b\ps{1})t\ps{1})\ot t\ps{2}\triangleleft b\ps{2}+ t\ps{1}\ot(t\ps{2}\triangleleft b-\varepsilon(b)t\ps{2})\\
  &\in I\ot T\ot T\ot I.
\end{align*}

Recall that given  coalgebras $C$ and $D$, a surjective coalgebra map $\pi: C\longrightarrow D$ is called a coalgebra coextension. Thus we have a coalgebra coextension $\pi: T\longrightarrow S$.

  \begin{definition}\label{equ}
    Let $C$ be a coalgebra, $\mathcal{B}$ be a right $\times_C$-Hopf coalgebra, $T$ be a right $\mathcal{B}$-module coalgebra, $S=T_{\mathcal{B}}$, $\mathcal{K}$ be a
left $\times_S$-Hopf coalgebra, and $T$ be a left $\mathcal{K}$-comodule coalgebra. Furthermore we assume that the $\Kc$ coaction is $C$-colinear. Then $T$ is called a $\mathcal{K}$-equivariant $\mathcal{B}$-Galois coextension of $S$, if the canonical map
    \begin{equation}\label{can}
      \can: T\Co_{C}\mathcal{B}\longrightarrow T\Co_{S} T, \quad t\ot b\longmapsto t\ps{1} \ot t\ps{2}\triangleleft b,
    \end{equation}
    is bijective and the right action of $\mathcal{B}$ on $T$ is $\mathcal{K}$-equivariant, \ie
    \begin{equation}\label{k-equivariant}
      (t\triangleleft b)\ns{-1} \Co_{S_{\rm cop}} (t\triangleleft b)\ns{0}= t\ns{-1}\Co_{S_{\rm cop}} t\ns{0}\triangleleft b, \quad t\ot b\in T\Co_C\mathcal{B}.
    \end{equation}
  \end{definition}
       In Definition \ref{equ}, the $S$-bicomodule structure of $T$ is given by
    \begin{equation}\label{kmk}
      t\longmapsto \pi(t\ps{1})\ot t\ps{2}, \quad t\longmapsto t\ps{1}\ot \pi(t\ps{2}),
    \end{equation}
    where the coalgebra map $\pi: T\longrightarrow S$ is the natural quotient map. We note that for any equivariant Hopf Galois coextension  we have
\begin{equation}\label{piaction}
  \pi(t\triangleleft b)=\ve(b)\pi(t), \qquad t\in T, b\in \mathcal{B}.
  \end{equation}

Using \eqref{kmk}, \eqref{piaction} and the fact that the comultiplication of $T$ is a morphism of right $C$-comodules one verifies that$\can$ defined in \eqref{can} is well-defined.
    We denote a $\mathcal{K}$-equivariant $\mathcal{B}$-Galois coextension in  Definition \ref{equ} by $^{\mathcal{K}}T(S)_{\mathcal{B}}$.  One notes that by $\mathcal{K}$-equivariant property of the right action  of $\mathcal{B}$ on $T$,$\can$ is a left $\mathcal{K}$-comodule map where the left $\mathcal{K}$-comodule structures of  $T\Co_{S} T$ and $T\Co_C \mathcal{B}$ are given by
    \begin{align}\nonumber
      &t\Co_S t'\longmapsto t\ns{-1} t'\ns{-1}\Co_{S_{\rm cop}}  t\ns{0}\Co_S t'\ns{0}, \\
      & t\Co_C b\longmapsto t\ns{-1}\Co_{S_{\rm cop}} t\ns{0}\Co_C b.
    \end{align}
    We define the left $C$-comodule structures on $T\Co_S T$  and $T\Co_C \mathcal{B}$  by
 $$t\Co_S t'\longmapsto t\sns{-1}\ot t\sns{0}\Co_S t', \quad \text{ and}\quad t\Co_C b\longmapsto \beta(b\ps{1})\ot t\Co_C b\ps{2}.$$
Similarly we define the right $C$-comodule structures on $T\Co_S T$  and $T\Co_C \mathcal{B}$  by
$$t\Co_S t'\longmapsto t\Co_S t'\sns{0}\ot t'\sns{1}, \quad \text{and }\quad  t\Co_C b\longmapsto t\Co_C b\ps{1}\ot \alpha(b\ps{2}).$$
    The map $\can$ is a right $\mathcal{B}$-module map where the right $\mathcal{B}$-module structures of $T\Co_{C}\mathcal{B}$ and $ T\Co_{S} T$ are given by
    \begin{equation}
      (t\Co_C b)\triangleleft b':=t\Co_C bb', \qquad \text {and} \quad (t\Co_S t')\triangleleft b:= t\Co_S t'\triangleleft b'.
    \end{equation}
One notes that the above action $$ T\Co_S (T\Co_C \mathcal{B})  \cong (T\Co_S T)\Co_C \mathcal{B}\longrightarrow T\Co_S T,$$ is well-defined by the fact that $T$ is a $S-C$ bicomodule; see  Definition \ref{module-coalgebra}.
 Furthermore $\can$ is a right and left $C$-comodule map via the preceding $C$-comodule structures. We leave to the reader to verify that the preceding actions and coactions are well-defined.
We denote the inverse of the Galois map \eqref{can} by the following  index notation
\begin{equation}
  \can^{-1}(t\Co_S t')=\can\ns{-}(t\Co_S t')\Co_C \can\ns{+}(t\Co_S t').
\end{equation}
If there is no confusion we write $\can^{-1}=\can\ns{-}\ot \can\ns{+}$.

We state  the properties of the maps $\can$ and $\can^{-1}$ in the  following lemma.


\begin{lemma}\label{canproperty} Let $^{\mathcal{K}}T(S)_{\mathcal{B}}$ be a $\mathcal{K}$-equivariant $\mathcal{B}$-Galois coextension. Then the following properties hold.\\
  \begin{enumerate}
\item[i)] $\can\ns{-}\ps{1}\Co_S \can\ns{-}\ps{2}\triangleleft \can\ns{+}= \Id_{T\Co_S T}, $
\item[ii)] $\can\ns{-}\left(t\ps{1} \Co_C t\ps{2}\triangleleft b\right)\Co_C \can\ns{+}\left(t\ps{1} \Co_{ C} t\ps{2}\triangleleft b\right)= t\Co_C b . $
\item[iii)]$\can\ns{-}(t\Co_S t') \triangleleft \can\ns{+}(t\Co_S t')= \ve(t) t'.$
\item[iv)]$ \ve\left(\can\ns{+}(t\Co_S t')\right)\can\ns{-}(t\Co_S t')=t\ve(t').$
\item[v)]$\can\ns{-}(t\Co_S t')\ns{-1}\Co_{S_{\rm cop}} \can\ns{-}(t\Co_S t')\ns{0}\Co_C \can\ns{+}(t\Co_S t')\\
=t\ns{-1}t'\ns{-1}\Co_{S_{\rm cop}} \can_-(t\ns{0}\ot t'\ns{0})\Co_C \can_+(t\ns{0}\ot t'\ns{0}).$
\item[vi)]$t\sns{-1}\ot \can\ns{-}(t\sns{0}\Co_S t')\Co_C \can\ns{+}(t\sns{0}\Co_S t')=\\
\beta\left([\can\ns{+}(t\Co_S t')]\ps{2}\right)\ot \can\ns{-}(t\Co_S t')\Co_C [\can\ns{+}(t\Co_S t')]\ps{1}.$
\item[vii)]$\can\ns{-}(t\Co_S t'\sns{0})\Co_C \can\ns{+}(t\Co_S t'\sns{0})\ot t'\sns{1}\\
=\can\ns{-}(t\Co_S t')\Co_C [\can\ns{+}(t\Co_S t')]\ps{1}\ot \alpha\left([\can\ns{+}(t\Co_S t')]\ps{2}\right).$
\item[viii)]$ \beta\left(\can\ns{+}(t\Co_S t')\right)\ot \can\ns{-}(t\Co_S t')= \ve(t')t\sns{-1}\ot t\sns{0}.$
\item[ix)]$\can\ns{-}(t\Co_S t')\ot \alpha\left(\can\ns{+}(t\Co_S t')\right)=\ve(t) t'\sns{0}\ot t'\sns{1}.$
\item[x)]$\left[\can\ns{-}(t\Co_S t')\right]\ps{1}\ot \left\{\left[\can\ns{-}(t\Co_S t')\right]\ps{2}\triangleleft \nu^-\left(b\ot \can\ns{+}(t\Co_S t')\right)\right\}\Co_C \\
     \nu^+\left(b\ot \can\ns{+}(t\Co_S t')\right)= t\ps{1}\ot \can\ns{-}(t\ps{2}\triangleleft b\Co_S t')\Co_C \can\ns{+}(t\ps{2}\triangleleft b\Co_S t').$
\item[xi)]$\can\ns{-}(t\triangleleft b \Co_S t')\Co_C \can\ns{+}(t\triangleleft b \Co_S t')=\\
\can\ns{-}(t\Co_S t')\triangleleft \nu^-\left(b\ot \can\ns{+}(t\Co_S t')\right)\Co_C \nu^+\left(b\ot \can\ns{+}(t\Co_S t')\right).$
\item[xii)]$\can\ns{-}(t\Co_S t'\triangleleft b)\Co_C \can\ns{+}(t\Co_S t'\triangleleft b)= \\
\can\ns{-}(t\Co_S t')\Co_C \can\ns{+}(t\Co_S t') b.$
\item[xiii)] $[\can_-(t_1\Co_S t_2)]\ps{1}\ot [\can_-(t_1\Co_S t_2)]\ps{2}\Co_C \can_+(t_1\Co_S t_2)=\\
 t_1\ps{1}\ot \can_-(t_1\ps{2}\Co_S t_2)\Co_C \can_+(t_1\ps{2}\Co_S t_2).$
\end{enumerate}
\end{lemma}
\begin{proof}
    We observe that i) and ii) are respectively  equivalent to $\can\circ \can^{-1}=Id$ and  $\can^{-1}\circ \can=Id$. To prove   iii)  we apply $\ve\ot \Id$ on  both hand sides of  i). Equation iv) is derived  by applying $\Id\ot\ve$ on  both hand sides of  i)  and then using the right $\mathcal{B}$-module coalgebra property of $T$.
  It is not difficult to see that  v) is equivalent to the left $\mathcal{K}$-comodule  property of $\can^{-1}$.
  The equations  vi) and vii) are just the left and right $C$-comodule property of  $\can^{-1}$.
    We prove   viii) by applying $\Id\ot \Id\ot \ve$ on  both hand sides of  vi). We prove ix)  by applying $\Id\ot \ve \ot \Id$ on both hand sides of vii).
 The equality
 \begin{equation}\label{can-nu}
  (Id_T\Co_S \can^{-1})\circ (\can\Co_S Id_T)= (\can\Co_C Id_{\mathcal{B}})\circ (Id_T\Co_C \nu^{-1})\circ (\can^{-1}_{13}),
 \end{equation}
 proves  x). Here we have
 \begin{align*}
& \can^{-1}_{13}: T \Co_C \mathcal{B} \Co_{S} T \longrightarrow T\Co_C \mathcal{B} \Co_{C_{{\rm cop}}} \mathcal{B},\\
&  t\ot b\ot t'\longmapsto \can\ns{-}(t\ot t')\ot b \ot \can\ns{+}(t\ot t').
  \end{align*}
 As  $\can^{-1}_{13}$ is the inverse map for
  \begin{align*}
    \can_{13}: t\ot b\ot b'\longmapsto t\ps{1}\ot b\ot t\ps{2}\triangleleft b',
  \end{align*}
   \eqref{can-nu}  is obtained by using  the bijectivity of all the involved maps in equation,
  \begin{equation}
    (Id_T\Co_S \can)\circ (\can\Co_C Id_{\mathcal{B}})=(\can\Co_S Id_T)\circ \can_{13}\circ (Id_T \Co_C \nu).
  \end{equation}

  A proof of   xi)  is obtained by applying $\ve\ot \Id\ot \Id$ on  both hand sides of x).
  We see that  xii) is equivalent to the right $\mathcal{B}$-module property of the map $\nu^{-1}$.
    The equation xiii) is equivalent to the left $T$-colinear property of the map $\can^{-1}$ where the left $T$-comodule structures of $T\Co_C \mathcal{B}$ and $T\Co_S T$ are given by
    \begin{align}
    t\Co_C b\longmapsto t\ps{1}\ot t\ps{2}\Co_C b,  \quad \text{ and} \quad t_1\Co_S t_2\longmapsto t_1\ps{1}\ot t_1\ps{2}\Co_S t_2,
    \end{align}
    respectively.

     To verify that the  $T$-comodule structure of $T\Co_C \mathcal{B}$ is well-defined, we use the $C$-comodule  compatibility of $\Bc$-module coalgebra $T$. The latter  is equivalent to
    \begin{equation}
     t\ps{1}\ot t\ps{2}\sns{0}\ot t\ps{2}\sns{1}= t\sns{0}\ps{1}\ot t\sns{0}\ps{2}\ot t\sns{1}.
\end{equation}
\end{proof}
For any $S$-bicomodule $T$, one can define,
\begin{equation}
  T^S=\left\{t\in T\;\mid \quad  t\ns{0}\varphi(t\ns{1})= t\ns{0}\varphi(t\ns{-1}), \quad \forall  \phi\in S^*\right\}
\end{equation}
We set
\begin{equation}
  T_S=\frac{T}{W_T},
\end{equation}
where
$$W_T= \left\{t\sns{0}\varphi(t\ns{1})- t\ns{0}\varphi(t\ns{-1})\;\mid\; \quad t\in T, {\varphi\in S^*}\right \}.$$
For any  coalgebra coextension  $\pi: T\twoheadrightarrow S$, we precisely obtain
\begin{equation}\label{TS}
T^S:=\left\{t\in T\;\mid\; \quad  t\ps{1} \varphi(\pi(t\ps{2}))= t\ps{2}\varphi( \pi(t\ps{1})) , \quad \forall {\varphi\in S^*}\right\},
\end{equation}
 and
\begin{equation}
W_T:=\left\{  t\ps{1} \varphi(\pi(t\ps{2}))- t\ps{2}\varphi( \pi(t\ps{1}))\;\mid\; \quad t\in T, {\varphi\in S^*}\right\}.
\end{equation}
As proved in \cite[Lemma 6.4.8, page 90]{bal2} the subspace $W_T$ is a coideal of $T$ and therefore $T_S=\frac{T}{W_T}$ is a coalgebra.
\begin{lemma}\label{SAYDaction}
Let  $^{\mathcal{K}}T(S)_{\mathcal{B}}$ be a $\mathcal{K}$-equivariant $\mathcal{B}$-Galois coextension  with the corresponding action $\triangleleft:T\Co_C \mathcal{B}\longrightarrow T$. Then, $\triangleleft$ induces a $\mathcal{B}$-action,
\begin{equation}
\blacktriangleleft:T^S\Co_C \mathcal{B}\longrightarrow T^S.
\end{equation}
\end{lemma}
\begin{proof} First we show that  $T^S$ is a right $C$-comodule. If  $t\ot b\in T\Co_C \Bc$ then we have
\begin{equation}\label{pro}
  t\sns{0}\ot t\sns{1}\ot b= t\ot b\sns{-1}\ot b\sns{0} \in T\ot C\ot \Bc.
\end{equation}
  If $t\in T^S$ then
$t\ps{1}\varphi(\pi(t\ps{2}))=t\ps{2}\varphi(\pi(t\ps{1}))$ for all $\varphi\in S^*$.
Therefore
$$t\ps{1}\varphi(\pi(t\ps{2}))\ot b\sns{-1}\ot b\sns{0}=t\ps{2}\varphi(\pi(t\ps{1}))\ot b\sns{-1}\ot b\sns{0}.$$ By \eqref{pro} we obtain

$$t\sns{0}\ps{1}\varphi(\pi(t\sns{0}\ps{2}))\ot t\sns{1}\ot b=t\sns{0}\ps{2}\varphi(\pi(t\sns{0}\ps{1}))\ot t\sns{1}\ot b.$$ This shows that
$t\sns{0}\ot t\sns{1}\ot  b\in T^S\ot C\ot \Bc$ and therefore $T^S $ is a right $C$-comodule.
 It is then enough to show that $t\triangleleft b\in T^S$ for all $t\ot b\in T^S\Co_C\Bc$.  Indeed,
\begin{align*}
&(t\triangleleft b)\ps{1}\varphi(\pi(t\triangleleft b)\ps{2}))=t\ps{1}\triangleleft b\ps{1}\varphi(\pi(t\ps{2}\triangleleft b\ps{2}))\\
&=t\ps{1}\triangleleft b\ps{1}\varphi(\pi(t\ps{2}))\ve(b\ps{2})= t\ps{1}\triangleleft b\varphi(\pi(t\ps{2}))\\
&=t\ps{2}\triangleleft b\varphi(\pi(t\ps{1}))=t\ps{2}\triangleleft b\ps{2}\varphi(\pi(t\ps{1}))\ve(b\ps{1})\\
&=t\ps{2}\triangleleft b\ps{2}\varphi(\pi(t\ps{1}\triangleleft b\ps{1}))=(t\triangleleft b)\ps{2}\varphi(\pi(t\triangleleft b)\ps{1})).
\end{align*}
We use the $\mathcal{B}$-module coalgebra property of $T$ in the first equality and $t\in T^S $ in the fourth equality.

\end{proof}

%

We  define a $S$-bicomodule structure on $T\Co_S T$ by
\begin{align}\label{coalgebras}
  t\Co_S t'\longmapsto t\Co_S t'\ps{1}\ot \pi(t'\ps{2}),\quad t\Co_S t'\longmapsto \pi(t\ps{1})\ot t\ps{2}\Co_S t'.
\end{align}
It is easy to check that the preceding coactions are well-defined. We set
\begin{equation}
  (T\Box_S T)_S= \frac{T\Co_S T}{W_{T\Co_ST}},
\end{equation}
where
\begin{equation}\label{ww}
  W_{T\Co_S T}= \langle   t\ot t'\ps{1} \varphi(\pi(t'\ps{2}))- t\ps{2}\ot t'\varphi(\pi(t\ps{1})) \mid \varphi\in S^*, t\ot t'\in T\Co_S T \rangle,
\end{equation}
\begin{lemma}\label{canline}
Let  $^{\mathcal{K}}T(S)_{\mathcal{B}}$ be a $\mathcal{K}$-equivariant $\mathcal{B}$-Galois coextension  with the canonical bijective map $\can$. Then  $\can$ induces the following  bijection;
$$\overline{\can}:T_S\Co_C \mathcal{B}\longrightarrow(T\Box_S T)_S,  \qquad \overline{t} \ot {b}\longmapsto \overline{\can(t\ot b)}.$$ Here the over line stands for a class in the quotient space.
\end{lemma}
\begin{proof} Since $W_T$ is a coideal of $T$ and the comultiplication of $T$ is a right $C$-comodule then $T_S$ is a right $C$-comodule.
To show that this map is well-defined first we show that if
$$\left(t\ps{1}\varphi(\pi(t\ps{2}))- t\ps{2}\varphi(\pi(t\ps{1}))\right)\ot b \in W_T\ot \Bc,$$ then
$$\can\left(t\ps{1}\varphi(\pi(t\ps{2}))- t\ps{2}\varphi(\pi(t\ps{1}))\right)\ot b \in W_{T\Co_S T}\ot \Bc.$$ We note that since $\can(t\ot b)= t\ps{1}\ot t\ps{2}\triangleleft b\in T\Co_S T$ then by the definition of $W_T$ in given \eqref{ww} we have;
\begin{align*}
&t\ps{1}\ot (t\ps{2}\triangleleft b)\ps{1}\varphi(\pi(t\ps{2} \triangleleft b)\ps{2})- t\ps{2}\ot t\ps{3}\triangleleft b\varphi(\pi(t\ps{1}))\\
&=t\ps{1}\ot t\ps{2}\triangleleft b\ps{1}\varphi(\pi(t\ps{3})\ve(b\ps{2}))- t\ps{2}\ot t\ps{3}\triangleleft b\varphi(\pi(t\ps{1}))\\
&=\varphi(\pi(t\ps{3}))t\ps{1}\ot t\ps{2}\triangleleft b- \varphi(\pi(t\ps{1}))t\ps{2}\ot t\ps{3}\triangleleft b   \in W_T.
\end{align*}
Now we have;
\begin{align*}
  &\can\left(t\ps{1}\varphi(\pi(t\ps{2}))\ot b- t\ps{2}\varphi(\pi(t\ps{1}))\ot b\right)\\
  &=\varphi(\can(t\ps{1}\ot b)- \varphi(\pi(t\ps{1}))\can(t\ps{2}\ot b)\\
  &= \varphi(\pi(t\ps{3}))t\ps{1}\ot t\ps{2}\triangleleft b- \varphi(\pi(t\ps{1}))t\ps{2}\ot t\ps{3}\triangleleft b.
\end{align*}
Therefore we have shown that if $t\ot b\in W_T\ot \Bc$ then $\can(t\ot b)\in W_{T\Co_ST}$. Now we show the converse implication $\can(t\ot b)\in W_{T\Co_ST} \Longrightarrow t\ot b\in W_T\ot \Bc$. One notes that the left $C$-comodule property of $\can^{-1}$ is equivalent to
     \begin{align}\label{leftcc}
     &\can_{-}(t\Co_S t')\ps{1}\ot \can_{-}(t\Co_S t')\ps{2}\ot \can_{+}(t\Co_S t')=\\ \nonumber
     &t\ps{1}\ot \can_{-}(t\ps{2}\ot t')\ot \can_{+}(t\ps{2}\ot t'),
\end{align}
 where the left $T$-comodule structures  of $T\Co_S T$ and $T\Co_C \Bc$ are given by $t\ot t'\longmapsto t\ps{1}\ot t\ps{2}\ot t'$ and $t\ot b\longmapsto t\ps{1}\ot t\ps{2}\ot b$, respectively. Also the right $S$-comodule property of the map $\can^{-1}$ is equivalent to
  \begin{align}\label{rightdd}
  &\can_{-}(t\ot t'\ps{1})\ot \can_{+}(t\ot t'\ps{1})\ot \pi(t'\ps{2})=\\ \nonumber
  &\can_{-}(t\ot t')\ps{1}\ot \can_{+}(t\ot t')\ot \pi(\can_{-}(t\ot t')\ps{2}),
\end{align}
where the right $S$-comodule structures of $T\Co_S T$ and $T\Co_C \Bc$ are given by $t\ot t'\longmapsto t\ot t'\ps{1}\ot \pi(t'\ps{2})$ and $t\ot b\longmapsto t\ps{1}\ot b\ot \pi(t\ps{2})$, respectively. Since $\can(t\ot b)\in W_{T\Co_S T}$, there are $c,c'\in T\Co_S T$ such that
$$\can(t\ot b)= c\ot c'\ps{1}\varphi(\pi(c'\ps{2}))- c\ps{2}\ot c'\varphi(\pi(c\ps{1})).$$
Therefore we have;
\begin{align*}
  &t\ot b= \can^{-1}\left(c\ot c'\ps{1}\varphi(\pi(c'\ps{2}))- c\ps{2}\ot c'\varphi(\pi(c\ps{1}))\right)\\
  &=\can_{-}(c\ot c'\ps{1})\varphi(\pi(c'\ps{2}))\ot \can_{+}(c\ot c'\ps{1})\\
  &- \can_{-}(c\ps{2}\ot c')\varphi(\pi(c\ps{1}))\ot \can_{+}(c\ps{2}\ot c')\\
  &=\can_{-}(c\ot c')\ps{1}\varphi(\pi(\can_{-}(c\ot c')\ps{2}))\ot \can_{+}(c\ot c')\\
  &- \can_{-}(c\ot c')\ps{2}\varphi(\pi(\can_{-}(c\ot c')\ps{1}))\ot \can_{+}(c\ot c')\\
  &=(\can_{-}(c\ot c')\ps{1}\varphi(\pi(\can_{-}(c\ot c')\ps{2}))\\
  &- \can_{-}(c\ot c')\ps{2}\varphi(\pi(\can_{-}(c\ot c')\ps{1}))\ot \can_{+}(c\ot c')\in W_T\ot \Bc.
\end{align*}
We use the the left $T$-comodule
 property and the right $S$-comodule property of $\can$ in the third equality.
Therefore the bijectivity of $\can$ completes the bijectivity of $\overline{\can}$.
\end{proof}

One notes that although  $T\Co_S T$ is not a coalgebra, it is proved in \cite[section 6.4, page 93-95]{bal2} that the quotient space   $(T\Co_S T)_S$  is a coalgebra by the following coproduct and counit
\begin{equation}\label{coproduct-main}
  \Delta(\overline{t\ot t'})= \overline{t\ps{1}\Co_S t'\ps{2}}\ot \overline{t\ps{2}\Co_S  t'\ps{1}}. \qquad \ve(\overline{t\ot t'})=\ve(t)\ve(t').
\end{equation}


\begin{lemma}
  The map $\kappa$ which is defined by
  \begin{equation}
  \kappa:= (\ve\ot \Id_{\mathcal{B}})\overline{\can}^{-1}:(T\Co_S T)_S\longrightarrow \mathcal{B} ,
\end{equation}
  is an anti-coalgebra map.
\end{lemma}
\begin{proof}
First we show that the map $\kappa$ is well-defined. For this we  need to show $\kappa(W_{T\Co_C T})=0$. This is equivalent to say that for any $t\ot t'\ps{1}\varphi(\pi(t'\ps{2}))- t\ps{2}\ot t'\varphi(\pi(t\ps{1})) \in W_{T\Co_C T} $ we have
    \begin{align}\label{0}
    &\kappa(t\ot t'\ps{1}\varphi(\pi(t'\ps{2}))- t\ps{2}\ot t'\varphi(\pi(t\ps{1})))\\ \nonumber
     & =\ve(\can_{-}(t\ot t'\ps{1}\varphi(\pi(t'\ps{2}))))\can_{+}(t\ot t'\ps{1})  \\ \nonumber
     &-\ve(\can_{-}(t\ps{2}\ot t'\varphi(\pi(t\ps{1})) ))\can_{+}(t\ps{2}\ot t')=0.
    \end{align}
    This can be  proved by the left $C$-comodule and right $S$-comodule properties of the map $\can^{-1}$ as follows.
  By applying $\pi \ot \ve\ot \Id$ on the left $C$-comodule property of $\can^{-1}$ given in \eqref{leftcc}  we have,
  \begin{equation}\label{1}
  \pi(\can_{-}(t\Co_S t'))\ot \can_{+}(t\Co_S t')=  \ve(\can_{-}(t\ps{2}\ot t'))\pi(t\ps{1})\ot \can_{+}(t\ps{2}\ot t').
  \end{equation}

By applying $\ve\ot \Id \ot \pi$ on the right $S$-comodule property of $\can^{-1}$ given in \eqref{rightdd} we have;
  \begin{equation}\label{2}
    \ve(\can_{-}(t\ot t'\ps{1})) \can_{+}(t\ot t'\ps{1})\varphi(\pi(t'\ps{2}))=  \can_{+}(t\ot t') \varphi(\pi(\can_{-}(t\ot t'))).
  \end{equation}
By applying \eqref{1} and \eqref{2} on the left hand side of \eqref{0} we obtain;
\begin{align*}
  &\ve(\can_{-}(t\ot t'\ps{1}\varphi(\pi(t'\ps{2}))))\can_{+}(t\ot t'\ps{1}) \\
  &- \ve(\can_{-}(t\ps{2}\ot t'\varphi(\pi(t\ps{1})) ))\can_{+}(t\ps{2}\ot t')\\
  &\can_{+}(t\ot t') \varphi(\pi(\can_{-}(t\ot t')))- \can_{+}(t\ot t') \varphi(\pi(\can_{-}(t\ot t')))=0.
\end{align*}
Therefore $\kappa$ is well-defined. To show that $\kappa$ is an anti-coalgebra map we need to show that $\D\circ\kappa=tw\circ(\kappa\ot\kappa)\circ \D$  and $\varepsilon\circ\kappa=\varepsilon$.  Since $\overline{\can}$ is bijective, this is equivalent to show $\D\circ\kappa\circ\overline{\can}=tw\circ(\kappa\ot\kappa)\circ \D\circ\overline{\can}$. The following computation proves this for all $\overline{t}\ot b\in T_S\Co_C \mathcal{B}$.
\begin{align*}
&tw\circ(\kappa\ot\kappa)\circ \D\circ\overline{\can}({t}\ot b)=tw\circ(\kappa\ot\kappa)\circ\D(\overline{t\ps{1} \ot t\ps{2}\triangleleft b})\\
&=tw\circ(\kappa\ot\kappa)\left[\overline{{t\ps{1}} \ot({t\ps{2}}\triangleleft b)\ps{2}}\ot\overline {{t\ps{2}} \ot({t\ps{2}}\triangleleft b)\ps{1}}\right]\\
&=tw\circ(\kappa\ot\kappa)\left[\overline{{t\ps{1}} \ot {t\ps{4}}\triangleleft b\ps{2}}\ot \overline{{t\ps{2}} \ot ({t\ps{3}}\triangleleft b\ps{1})}\right]\\
&=\kappa(\overline{{t\ps{2}} \ot {t\ps{3}}\triangleleft b\ps{1}})\ot \kappa(\overline{{t\ps{1}} \ot {t\ps{4}}\triangleleft b\ps{2}})\\
&=\varepsilon({t\ps{2}})b\ps{1}\ot \kappa(\overline{{t\ps{1}} \ot {t\ps{3}}\triangleleft b\ps{2}})\\
&=b\ps{1}\ot \kappa\left(\overline{{t\ps{1}} \ot \varepsilon({t\ps{2}}){t\ps{3}}\triangleleft b\ps{2}}\right)\\
&=b\ps{1}\ot \kappa(\overline{{t\ps{1}} \ot {t\ps{2}}\triangleleft b\ps{2}})=\ve({t})b\ps{1}\ot b\ps{2}=\D\circ(\varepsilon\ot \Id_{\mathcal{B}})(\overline{t}\ot b)\\
&=\D\circ(\varepsilon\ot \Id_{\mathcal{B}})\circ\overline{\can}^{-1}\overline{\can}(\overline{t}\ot b)=\D\circ\kappa\circ\overline{\can}(\overline{t}\ot b).
\end{align*}
We use Lemma \ref{canline} in the first equality and   Lemma \ref{canproperty}(ii) in the fifth and eight equalities. Moreover,
\begin{align*}
&\ve\circ \kappa(\overline{t\ot t'})=\ve\circ (\ve\ot \Id_{\mathcal{B}})\circ \overline{\can}^{-1}(\overline{t\ot t'})\\
&=\ve\circ (\ve\ot \Id_{\mathcal{B}})(\can_-(\overline{t\ot t'})\ot \can_+(\overline{t\ot t'}))\\
&=\ve(\can_-(\overline{t\ot t'}))\ve(\can_+(\overline{t\ot t'}))=\ve(t)\ve(t')=\ve(\overline{t \ot t'}).
\end{align*}
We use Lemma \ref{canproperty}(iv) in  the fourth equality.
\end{proof}

The following lemma introduce some properties of the map $\kappa$.
\begin{lemma}\label{kappaproperty}
 The map $\kappa$ satisfies the  following  properties for all ${t\ot t'}\in (T\Co_S T)_S$.
   \begin{enumerate}
\item[i)] $\kappa(\overline{t\ot t'})\ps{1}\ot \kappa(\overline{t\ot t'})\ps{2}= \kappa(\overline{t\ps{2}\ot t'\ps{1}})\ot \kappa(\overline{t\ps{1}\ot t'\ps{2}})$.
\item[ii)]$ t\ps{1}\triangleleft \kappa(\overline{t\ps{2}\ot t'})= \ve(t)t'.$
\item[iii)]$ t\ns{-1}t'\ns{-1}\ot \kappa(\overline{t\ns{0}\ot t'\ns{0}})= \ve(t\ps{1}\ns{0})t\ps{1}\ns{-1}\ot \kappa(\overline{t\ps{2}\ot t'}).$
\end{enumerate}
\end{lemma}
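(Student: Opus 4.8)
The plan is to dispose of part (i) by a one-line appeal to the fact, established just above, that $\kappa$ is an anti-coalgebra map, and to prove parts (ii) and (iii) by substituting a generic element of $(T\Co_S T)^S$ through the bijection $\overline{can}$. Throughout I will use the formula $\kappa(t\ot t')=\ve(can\ns{-}(t\ot t'))\,can\ns{+}(t\ot t')$, which is immediate from $\kappa=(\ve\ot\Id_{\mathcal B})\circ\overline{can}^{-1}$ and which makes sense for any $t\ot t'\in T\Co_S T$ since $can^{-1}$ is globally defined. For (i) itself, the right-hand side is exactly $tw\circ(\kappa\ot\kappa)\circ\D(t\ot t')$ computed with the coproduct $\D(t\ot t')=t\ps{1}\Co_D t'\ps{2}\ot t\ps{2}\Co_D t'\ps{1}$ of $(T\Co_S T)^S$, while the left-hand side is $\D(\kappa(t\ot t'))$; hence (i) merely restates the identity $\D\circ\kappa=tw\circ(\kappa\ot\kappa)\circ\D$ and needs nothing further.

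For (ii) and (iii) I would write a generic element as $t\ot t'=\overline{can}(s\ot b)=s\ps{1}\ot s\ps{2}\triangleleft b$ with $s\ot b\in T^S\Co_C\mathcal B$, which is legitimate because $\overline{can}$ is a bijection onto $(T\Co_S T)^S$. The key computational device is that $\kappa$ collapses $can$-images: Lemma \ref{canproperty}(ii) gives $\kappa(x\ps{1}\ot x\ps{2}\triangleleft b)=\ve(x)b$ for every $x\ot b$. For (ii), iterating the coproduct on $t=s\ps{1}$ yields $t\ps{1}\ot t\ps{2}\ot t'=s\ps{1}\ot s\ps{2}\ot s\ps{3}\triangleleft b$; regrouping the last two legs as a single coproduct and applying the collapse identity gives $\kappa(t\ps{2}\ot t')=\ve(s\ps{2})\,b$, so that
\[
t\ps{1}\triangleleft\kappa(t\ps{2}\ot t')=\ve(s\ps{2})\,s\ps{1}\triangleleft b=s\triangleleft b,
\]
which is precisely $\ve(t)t'=\ve(s\ps{1})\,s\ps{2}\triangleleft b$.

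For (iii) the extra ingredient is the interaction of the coaction with the action. Using the $\mathcal K$-equivariance \eqref{k-equivariant} on the factor $t'=s\ps{2}\triangleleft b$ and then the comodule-coalgebra axiom \eqref{comodulecoalgebra-2}, I would rewrite
\[
t\ns{-1}t'\ns{-1}\ot t\ns{0}\ot t'\ns{0}=s\ps{1}\ns{-1}s\ps{2}\ns{-1}\ot s\ps{1}\ns{0}\ot s\ps{2}\ns{0}\triangleleft b=s\ns{-1}\ot s\ns{0}\ps{1}\ot s\ns{0}\ps{2}\triangleleft b .
\]
Applying $\kappa$ to the last two legs (again a $can$-image, now of $s\ns{0}$) collapses them to $\ve(s\ns{0})b$, giving $\ve(s\ns{0})\,s\ns{-1}\ot b$ for the left-hand side. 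The right-hand side is computed as in (ii): $\kappa(t\ps{2}\ot t')=\ve(s\ps{2})b$ and $\ve(t\ps{1}\ns{0})t\ps{1}\ns{-1}=\ve(s\ps{1}\ns{0})s\ps{1}\ns{-1}$, and the counit $\ve(s\ps{2})$ recombines $s\ps{1}$ into $s$, producing the same $\ve(s\ns{0})\,s\ns{-1}\ot b$.

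The main obstacle is the bookkeeping in (iii): one must apply $\mathcal K$-equivariance to $t'$ \emph{before} the comodule-coalgebra identity can be invoked, and keep careful track of which Sweedler legs are consumed by $\kappa$ and which by the counit. The collapse identity $\kappa\circ can=\ve\ot\Id_{\mathcal B}$ coming from Lemma \ref{canproperty}(ii) is exactly what keeps these manipulations finite and terminates each computation in the desired normal form; parts (i) and (ii) are comparatively routine once that identity and the anti-coalgebra property are in hand.
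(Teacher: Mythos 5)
Your proof is correct, and part (i) is handled exactly as in the paper (both reduce it to the anti-coalgebra map property of $\kappa$), but for (ii) and (iii) you take a genuinely different, though closely related, route. The paper never parametrizes elements of $(T\Co_S T)^S$ through $\overline{can}$; instead it first proves the identity \eqref{v-v-v}, namely $can\ns{-}(t\ot t')\ot can\ns{+}(t\ot t')= t\ps{1}\ot \kappa(t\ps{2}\ot t')$, and then obtains (ii) by applying the right action to this identity and invoking Lemma \ref{canproperty}(iii), and obtains (iii) by substituting \eqref{v-v-v} into the $\mathcal{K}$-colinearity of $can^{-1}$, Lemma \ref{canproperty}(v). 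Your collapse identity $\kappa\circ can=\ve\ot \Id_{\mathcal{B}}$ (the $(\ve\ot\Id)$-image of Lemma \ref{canproperty}(ii)) is precisely the inverse-side reformulation of \eqref{v-v-v}, so the two proofs of (ii) are essentially mirror images of one another; the real divergence is in (iii), where you bypass Lemma \ref{canproperty}(v) entirely and argue directly from the $\mathcal{K}$-equivariance axiom \eqref{k-equivariant} and the comodule-coalgebra axiom \eqref{comodulecoalgebra-2} --- exactly the ingredients from which property (v) is itself derived. What your approach buys is self-containedness (only bijectivity of $can$, Lemma \ref{canproperty}(ii), and the two structural axioms are needed) and transparent bookkeeping, since every element is put in the normal form $s\ps{1}\ot s\ps{2}\triangleleft b$ with $s\ot b\in T^S\Co_C\mathcal{B}$; what the paper's approach buys is brevity, since all the colinearity information has already been packaged into Lemma \ref{canproperty}. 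One point your write-up makes usefully explicit, and which the paper leaves implicit in \eqref{v-v-v}: expressions such as $\kappa(t\ps{2}\ot t')$ only parse after extending $\kappa$ from $(T\Co_S T)^S$ to all of $T\Co_S T$, and your formula $\kappa=(\ve\ot\Id_{\mathcal{B}})\circ can^{-1}$ does this cleanly and consistently with the paper's usage.
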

\begin{proof}
The relation i) is equivalent to the anti-coalgebra map property of $\kappa$.
To prove ii), we apply $ \Id\ot \ve \ot \Id$ on both hand sides of  Lemma \ref{canproperty}(xiii). We obtain
\begin{align}\label{v-v-v}\nonumber
& \can_-(\overline{t\ot t'})\ot \can_+(\overline{t\ot t'})= \ve(\can_-(\overline{t\ps{2}\ot t'}))t\ps{1}\ot \can_+(\overline{t\ps{2}\ot t'})\\
 &= t\ps{1}\ot \kappa(\overline{t\ps{2}\ot t'}).
\end{align}
By applying the right action of $\mathcal{B}$ over $T$ in the previous equation we have
\begin{align*}
 & t\ps{1}\triangleleft \kappa(\overline{t\ps{2}\ot t'})= \ve(\can_-(\overline{t\ps{2}\ot t'}))t\ps{1}\triangleleft \can_+(\overline{t\ps{2}\ot t'})\\
 &= \can_-(\overline{t\ot t'})\triangleleft \can_+(\overline{t\ot t'})=\ve(t)t'.
\end{align*}
We use  Lemma \ref{canproperty}(iii) on the last equality. For the Relation iii), we apply the relation \eqref{v-v-v} on Lemma \ref{canproperty}(v).

\end{proof}

\begin{proposition}\label{collection}
Let $^{\mathcal{K}}T(S)_{\mathcal{B}}$ be a $\mathcal{K}$-equivariant $\mathcal{B}$-Galois coextension. Then
\begin{enumerate}
\item [i)]  $T_S$ is a left $\frac{T\Co_S T}{W_{T\Co_S T}}$-comodule.
\item[ii)]  $T^S$ is a right $\frac{T\Co_S T}{W_{T\Co_S T}}$-comodule.
\item[iii)]   $T_S$ is a right $\Bc$-comodule.
\item[iv)]  $T^S$ is a left $\Bc$-comodule.
\end{enumerate}
\end{proposition}
\begin{proof}
  First we show that $T_S$ is a left $\frac{T\Co_S T}{W_{T\Co_S T}}$-comodule by the following coaction
  $$t\longmapsto \overline{t\ps{1}\Co_S t\ps{3}}\ot t\ps{2}.$$
   Since $T_S= \frac{T}{W_T}$ is a coalgebra then  $t\longmapsto \overline{t\ps{1}\ot t\ps{3}}\ot t\ps{2}\in T\ot T\ot \frac{T}{W_T}$. Also by the definition of $W_{T}$  for any $t\in \frac{T}{W_T}$ we have
   $$t\ps{1}\ot \left(\varphi(\pi(t\ps{2})) t\ps{3}- \varphi(\pi(t\ps{3})) t\ps{2}\right)\ot t\ps{4}=0\in \frac{T}{W_T},$$
   for all $\varphi\in S^*$. Since we assume that the coalgebra $T$ is defined on a field and therefore is locally projective (as mentioned in \cite[Lemma 6.4.11, page 90]{bal2}), then
   $$t\ps{1}\ot \left(\pi(t\ps{2})\ot t\ps{3}- \pi(t\ps{3})\ot t\ps{2}\right)\ot t\ps{4}=0\in \frac{T}{W_T}.$$ Therefore
   $$t\ps{1}\ot \pi(t\ps{2})\ot t\ps{4}\ot t\ps{3}=t\ps{1}\ot \pi(t\ps{3})\ot t\ps{4}\ot t\ps{2}.$$ This is equivalent to $t\ps{1}\ot t\ps{3}\ot t\ps{2}\in T\Co_S T\ot T$. Thus the coaction is well-defined.
    The following computation shows that the coaction is coassociative.
  \begin{align*}
    &(\overline{t\ps{1}\ot t\ps{3}})\ps{1}\ot (\overline{t\ps{1}\ot t\ps{3}})\ps{2}\ot t\ps{2}=t\ps{1}\ps{1}\ot t\ps{3}\ps{2}\ot t\ps{1}\ps{2}\ot t\ps{3}\ps{1}\ot t\ps{2}\\
    &t\ps{1}\ot t\ps{5}\ot t\ps{2}\ot t\ps{4}\ot t\ps{3}= t\ps{1}\ot t\ps{3}\ot t\ps{2}\ps{1}\ot t\ps{2}\ps{3}\ot t\ps{2}\ps{2}.
  \end{align*}
  To prove ii), we define the following right $\frac{T\Co_S T}{W_{T\Co_S T}}$-coaction on $T^S$;
  \begin{equation}
    t\longmapsto t\ps{2}\ot \overline{t\ps{3}\ot t\ps{1}}
  \end{equation}
  The following computation proves $t\ps{2}\ot \overline{t\ps{3}\ot t\ps{1}}\in T^S\ot T\ot T$.
  \begin{align*}
    &t\ps{3}\varphi(\pi(t\ps{2}))\ot \overline{t\ps{4}\ot t\ps{1}}=t\ps{3}\ot \overline{t\ps{4}\ot t\ps{1}\varphi(\pi(t\ps{2}))}\\
    &=t\ps{3}\ot \overline{t\ps{4}\ps{2}\varphi(\pi(t\ps{4}\ps{1})\ot \ve(t\ps{1})t\ps{2}}=t\ps{2}\varphi(\pi(t\ps{3}))\ot \overline{t\ps{4}\ot t\ps{1}}.
  \end{align*}
  We use the definition $W_{T\Co_S T}$ given in \eqref{ww}. Also for any $t\in T^S$ we have $t\ps{1}\varphi(\pi(t\ps{2}))=t\ps{2}\varphi(\pi(t\ps{1}))$ for all $\varphi \in S^*$. Since we assume the coalgebra $T$ is defined on a field (locally projective) this is equivalent to
  $t\ps{1}\ot \pi(t\ps{2})=t\ps{2}\ot \pi(t\ps{1})$. By applying
  $\Delta\ot \Delta \ot \Id$ on both hand sides of the preceding equality we obtain
  $$t\ps{2}\ot t\ps{3}\ot \pi(t\ps{4})\ot t\ps{1}= t\ps{3}\ot t\ps{4}\ot \pi(t\ps{1})\ot t\ps{2},$$ which is equivalent to $t\ps{2}\ot t\ps{3}\ot t\ps{1}\in T\ot T\Co_S T$.
  This shows that the coaction is well-defined. The following computation proves that the coaction is coassociative.
  \begin{align*}
    &t\ps{2}\ot (\overline{t\ps{3}\ot t\ps{1}})\ps{1}\ot (\overline{t\ps{3}\ot t\ps{1}})\ps{2}= t\ps{2}\ot t\ps{3}\ps{1}\ot t\ps{1}\ps{2}\ot t\ps{3}\ps{2}\ot t\ps{1}\ps{1}\\
    &t\ps{3}\ot t\ps{4}\ot t\ps{2}\ot t\ps{5}\ot t\ps{1}= t\ps{2}\ps{2}\ot t\ps{2}\ps{3}\ot t\ps{2}\ps{1}\ot t\ps{3}\ot t\ps{1}.
  \end{align*}
 For iii),  since $\kappa$ is an anti coalgebra map it turns the left coaction introduced in (i) to a well-defined coassociative right coaction of $\Bc$  on $T_S$ as follows
  \begin{equation}
    t\longmapsto t\ps{2} \ot \kappa(\overline{t\ps{1}\ot t\ps{3}}).
  \end{equation}
For the reader's convenience here we explain why the coaction is coassociative.

\begin{align*}
  &t\ps{2} \ot \kappa(\overline{t\ps{1}\ot t\ps{3}})\ps{1}\ot \kappa(\overline{t\ps{1}\ot t\ps{3}})\ps{2}\\
  &=t\ps{2} \ot \kappa(\overline{t\ps{1}\ps{2}\ot t\ps{3}\ps{1}})\ot \kappa(\overline{t\ps{1}\ps{1}\ot t\ps{3}\ps{2}})\\
  &=t\ps{3} \ot \kappa(\overline{t\ps{2}\ot t\ps{4}})\ot \kappa(\overline{t\ps{1}\ot t\ps{5}})\\
  &=t\ps{2}\ps{2} \ot \kappa(\overline{t\ps{2}\ps{1}\ot t\ps{2}}\ps{3})\ot \kappa(\overline{t\ps{1}\ot t\ps{3}}).\\
\end{align*}

   and for iv),  again since $\kappa$ is an anti coalgebra map it turns the right coaction introduced in (ii) to a well-defined coassociative left coaction of $\Bc$   on $T^S$ as follows
   \begin{equation}
     t\longmapsto \kappa(\overline{t\ps{3}\ot t\ps{1}})\ot t\ps{2}.
   \end{equation}

\end{proof}
\subsection{Equivariant Hopf Galois coextension as a functor}{\label{ss-3-1}
Let $^{\mathcal{K}}T(S)_{\mathcal{B}}$ be a $\mathcal{K}$-equivariant $\mathcal{B}$-Galois coextension, and   $M$ be a left-right SAYD  module over $\mathcal{K}$. We let $\mathcal{B}$ coact on $\widetilde{M}:= M\Co_{\mathcal{K}} T$ from  left by
\begin{equation}\label{coaction}
m\Co_{\Kc} t\longmapsto  \kappa\left( \overline{(t\ps{2}\ns{0})\ps{2} \Co_S t\ps{1}}\right)\ot t\ps{2}\ns{-1}\triangleright m\Co_{\Kc} (t\ps{2}\ns{0})\ps{1},
  \end{equation}
and let $\mathcal{B}$ act on $\widetilde{M}$ from  right by
\begin{equation}\label{action}
  (m\Co_{\Kc} t)\triangleleft b= m\Co_{\Kc} (t\triangleleft b).
\end{equation}
The right $C$-coaction of $M\Co_{\mathcal{K}} T$ is defined by
\begin{equation}
  m\Co_{\Kc}t \longmapsto m \Co_{\Kc} t\sns{0} \ot t\sns{1}.
\end{equation}
Before proving that the coaction is well-defined we explain it more.
By \eqref{v-v-v}, the coaction  defined in \eqref{coaction} reduces to
\begin{equation}\label{reduced-coaction}
 m\Co_{\Kc} t\longmapsto \can_+(\overline{t\ps{2}\ns{0}\ot t\ps{1}})\ot t\ps{2}\ns{-1}\triangleright m\ot \can_-(\overline{t\ps{2}\ns{0}\ot t\ps{1}}).
\end{equation}
Using the left $\Kc$-comodule coalgebra property of $T$, the coaction  defined in \eqref{coaction} reduces to
\begin{align}\label{third version}
  m\Co_{\Kc} t\longmapsto \kappa\left(\overline {t\ps{3}\ns{0} \Co_S t\ps{1}}\right)\ot t\ps{2}\ns{-1}t\ps{3}\ns{-1}\triangleright m\Co_{\Kc} t\ps{2}\ns{0}.
\end{align}
We notice the similarities  between the coaction \eqref{third version} and the coaction defined in Proposition \ref{collection}(iv).
In fact  $\widetilde{M}=M\Co_{\Kc} T$ is a right $(T\Co_S T)_S$-comodule by the following coaction,
\begin{equation}
  m\Co_{\Kc} t\longmapsto t\ps{2}\ns{-1}t\ps{3}\ns{-1}\triangleright m \Co_{\Kc} t\ps{2}\ns{0}\ot \overline{ t\ps{3}\ns{0}\Co_S t\ps{1}},
\end{equation}
 and the anti-coalgebra map property of $\kappa$ turns $\widetilde{M}$ to a left $\Bc$-comodule.

The following computation shows that the coaction \eqref{coaction} is well-defined on the second cotensor.
\begin{align*}
  &\kappa\left( \overline{t\ps{2}\ns{0}\ps{2} \Co_S t\ps{1}}\right)\ot \left(t\ps{2}\ns{-1}\triangleright m\right)\ns{0}\ot \left(t\ps{2}\ns{-1}\triangleright m\right)\ns{1} \ot t\ps{2}\ns{0}\ps{1}\\
  &= \kappa\left( \overline{t\ps{2}\ns{0}\ps{2} \Co_S t\ps{1}}\right)\ot \nu^+(m\ns{1}, t\ps{2}\ns{-1}\ps{1})m\ns{0}\ot t\ps{2}\ns{-1}\ps{2} \nu^-(m\ns{1}, t\ps{2}\ns{-1}\ps{1})\ot \\
  &\ot t\ps{2}\ns{0}\ps{1}\\
  &= \kappa\left( \overline{t\ns{0}\ps{2}\ns{0}\ps{2} \Co_S t\ns{0}\ps{1}}\right)\ot \nu^+(t\ns{-1}, t\ns{0}\ps{2}\ns{-1}\ps{1})m\ot \\
  &\ot t\ns{0}\ps{2}\ns{-1}\ps{2} \nu^-(t\ns{-1}, t\ns{0}\ps{2}\ns{-1}\ps{1})\ot t\ns{0}\ps{2}\ns{0}\ps{1}    \\
  &= \kappa\left( \overline{t\ns{0}\ps{2}\ps{2}\ns{0} \Co_S t\ns{0}\ps{1}}\right)\ot \nu^+(t\ns{-1}, t\ns{0}\ps{2}\ps{1}\ns{-1}\ps{1} t\ns{0}\ps{2}\ps{2}\ns{-1}\ps{1})m\ot \\
  &\ot t\ns{0}\ps{2}\ps{1}\ns{-1}\ps{2}  t\ns{0}\ps{2}\ps{2}\ns{-1}\ps{2}\nu^-(t\ns{-1}, t\ns{0}\ps{2}\ps{1}\ns{-1}\ps{1} t\ns{0}\ps{2}\ps{2}\ns{-1}\ps{1} )\ot t\ns{0}\ps{2}\ps{1}\ns{0}   \\
  &=\kappa\left( \overline{t\ps{2}\ns{0}\ps{2}\ns{0} \Co_S t\ps{1}\ns{0}}\right)\ot \nu^+(t\ps{1}\ns{-1}t\ps{2}\ns{-1}, t\ps{2}\ns{0}\ps{1}\ns{-1}\ps{1} t\ps{2}\ns{0}\ps{2}\ns{-1}\ps{1})m\ot \\
  &\ot t\ps{2}\ns{0}\ps{1}\ns{-1}\ps{2}  t\ps{2}  \ns{0}\ps{2}\ns{-1}\ps{2} \nu^-(t\ps{1}\ns{-1}t\ps{2}\ns{-1}, t\ps{2}\ns{0}\ps{1}\ns{-1}\ps{1} t\ps{2}\ns{0}\ps{2}\ns{-1}\ps{1} )\ot \\
  &\ot t\ps{2}\ns{0}\ps{1}\ns{0}   \\
  &=\kappa\left( \overline{(t\ps{2}\ns{0})\ps{2} \Co_S t\ps{1}}\right)\ot\nu^+(t\ps{2}\ns{0}\ps{1}\ns{-1}t\ps{2}\ns{-1}\ps{1}, t\ps{2}\ns{-1}\ps{2})m\ot \\
  &\ot \nu^-(t\ps{2}\ns{0}\ps{1}\ns{-1}t\ps{2}\ns{-1}\ps{1}, t\ps{2}\ns{-1}\ps{2})\ot t\ps{2}\ns{0}\ps{1}\ns{0} \\
  &=\kappa\left( \overline{(t\ps{2}\ns{0})\ps{2} \Co_S t\ps{1}}\right)\ot t\ps{2}\ns{-1}\triangleright m \ot \left((t\ps{2}\ns{0})\ps{1}\right)\ns{-1}\ot \left((t\ps{2}\ns{0})\ps{1}\right)\ns{0}.
\end{align*}
We used the left-right anti Yetter-Drinfeld condition of $M$ in the first equality. In the second equality we used the fact that $m\ot t\in M\Co_{\Kc}T$.
We used the $\Kc$-comodule coalgebra property of $T$ in \eqref{comodulecoalgebra-2} in the third and fourth equalities. The coassociativity of the coaction was used in the fifth equality. We used Lemma \ref{property} (ii) in the last equality.
The following computation shows that the coaction is well-defined on the first cotensor.
\begin{align*}
  &\kappa\left(\overline {t\ps{3}\ns{0}\ps{1}\ot \pi( t\ps{3}\ns{0}\ps{2} )\ot  t\ps{1}}\right)\ot t\ps{2}\ns{-1}t\ps{3}\ns{-1}\triangleright m\Co_{\Kc} t\ps{2}\ns{0}\\
  &=\kappa\left(\overline {t\ps{3}\ps{1}\ns{0}\ot \pi( t\ps{3}\ps{2} \ns{0})\ot  t\ps{1}}\right)\ot t\ps{2}\ns{-1}t\ps{3}\ps{1}\ns{-1}t\ps{3}\ps{2}\ns{-1}\triangleright m\Co_{\Kc} t\ps{2}\ns{0}  \\
  &=\kappa\left(\overline {t\ps{3}\ns{0}\ot \pi( t\ps{4} \ns{0})\ot  t\ps{1}}\right)\ot t\ps{2}\ns{-1}t\ps{3}\ns{-1}t\ps{4}\ns{-1}\triangleright m\Co_{\Kc} t\ps{2}\ns{0}  \\
  &=\kappa\left(\overline {t\ps{4}\ns{0}\ot \pi( t\ps{1} )\ot  t\ps{2}}\right)\ot t\ps{3}\ns{-1}t\ps{4}\ns{-1}\triangleright m\Co_{\Kc} t\ps{3}\ns{0}\\
  &=\kappa\left(\overline {t\ps{3}\ns{0}\ot \pi( t\ps{1}\ps{1} )\ot  t\ps{1}\ps{2}}\right)\ot t\ps{2}\ns{-1}t\ps{3}\ns{-1}\triangleright m\Co_{\Kc} t\ps{2}\ns{0}.\\
\end{align*}
We used the $\Kc$-comodule coalgebra property of $T$ in \eqref{comodulecoalgebra-2} in the first equality. In the third equality we used the fact that $m\ot t\in M\Co_{\Kc}T$ and $\Kc$ is a $\times_S$-Hopf coalgebra.
\begin{theorem}\label{mainSAYD}
\label{main} Let $C$  be a coalgebra,  $\mathcal{B}$ a right $\times_C$-Hopf coalgebra, $T$ be a right $\mathcal{B}$-module coalgebra, $S=T_{\mathcal{B}}$, $\mathcal{K}$ a left $\times_S$-Hopf coalgebra, $T$ be a left $\mathcal{K}$-comodule coalgebra, and $M$ be a left-right SAYD module  over $\mathcal{K}$. If $^{\mathcal{K}}T(S)_{\mathcal{B}}$ is a $\mathcal{K}$-equivariant $\mathcal{B}$-Galois coextension, then $\widetilde{M}:= M\Co_{\mathcal{K}} T$ is a right-left SAYD module over $\mathcal{B}$ by the coaction and action defined in \eqref{coaction} and \eqref{action}.
\end{theorem}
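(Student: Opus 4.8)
The plan is to verify, one at a time, the four ingredients in the definition of a right-left SAYD module over $\mathcal{B}$: that \eqref{action} defines a right $\mathcal{B}$-action on $\widetilde{M}$, that \eqref{coaction} defines a left $\mathcal{B}$-coaction, that the induced left $C$-coaction agrees with the canonical one \eqref{c-action}, and finally the anti-Yetter--Drinfeld identity \eqref{SAYD2} together with stability. Throughout I would work with the reduced form \eqref{reduced-coaction} of the coaction rather than \eqref{coaction}, since the index notation $can_{\pm}$ plugs directly into Lemmas \ref{canproperty} and \ref{kappaproperty}. The module axioms are the easy end: because $\triangleleft$ touches only the $T$-leg, associativity and unitality are inherited from $T$, and the only point is that \eqref{action} sends $M\Co_{\mathcal{K}}T$ back into itself. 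This is exactly the content of the $\mathcal{K}$-equivariance \eqref{k-equivariant}: applying $\triangleleft b$ to the last leg of the balancing relation $m\ns{0}\ot m\ns{1}\ot t=m\ot t\ns{-1}\ot t\ns{0}$ and rewriting $t\ns{-1}\ot t\ns{0}\triangleleft b$ as $(t\triangleleft b)\ns{-1}\ot(t\triangleleft b)\ns{0}$ shows $m\ot(t\triangleleft b)\in\widetilde{M}$.

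For the coaction I would first establish well-definedness, i.e.\ that \eqref{reduced-coaction} has its $can_{\pm}$-arguments in $T\Co_S T$ (using the comodule-coalgebra axioms \eqref{comodulecoalgebra-2}--\eqref{comodulecoalgebra-3}) and, crucially, that its value $(m\ot t)\nsb{0}=(t\ps{2}\ns{-1}\triangleright m)\ot can_{-}(t\ps{2}\ns{0}\ot t\ps{1})$ again lies in $M\Co_{\mathcal{K}}T$. This landing-in-the-cotensor is the step I expect to be the main obstacle, since it is where the left-right SAYD hypothesis on $M$ is consumed: one must expand the right $\mathcal{K}$-coaction of $t\ps{2}\ns{-1}\triangleright m$ by the $\mathcal{K}$-AYD identity \eqref{SAYD}, expand the $\mathcal{K}$-coaction of $can_{-}$ by the colinearity Lemma \ref{canproperty}(v), and reconcile the two using the balancing relation of $\widetilde{M}$. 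Once well-definedness is in hand, coassociativity and counitality follow from the anti-coalgebra property of $\kappa$ (Lemma \ref{kappaproperty}(i)) together with Lemma \ref{canproperty}(iii)--(iv), and the identification of the induced left $C$-coaction $\beta((m\ot t)\nsb{-1})\ot(m\ot t)\nsb{0}$ with \eqref{c-action} rests on Lemma \ref{canproperty}(vi) and (viii).

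The longest computation is the AYD identity \eqref{SAYD2}, but it is largely mechanical once set up. I would apply \eqref{reduced-coaction} to $m\ot(t\triangleleft b)$, use the $\mathcal{B}$-module coalgebra property of $T$ (Definition \ref{module-coalgebra}(iii)) to split $(t\triangleleft b)\ps{1}\ot(t\triangleleft b)\ps{2}=t\ps{1}\triangleleft b\ps{1}\ot t\ps{2}\triangleleft b\ps{2}$, and then \eqref{k-equivariant} to replace $(t\ps{2}\triangleleft b\ps{2})\ns{-1}\ot(t\ps{2}\triangleleft b\ps{2})\ns{0}$ by $t\ps{2}\ns{-1}\ot t\ps{2}\ns{0}\triangleleft b\ps{2}$. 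The two resulting occurrences of the $\mathcal{B}$-action inside the $can$-arguments are converted into the Galois map $\nu^{\pm}$ of $\mathcal{B}$ by Lemma \ref{canproperty}(xi) (to extract $b\ps{2}$) and Lemma \ref{canproperty}(xii) (to push $b\ps{1}$ to the right), producing the factors $\nu^{\pm}(b\ps{2}\ot can_{+}(t\ps{2}\ns{0}\ot t\ps{1})\,b\ps{1})$. Finally the right $\mathcal{B}$-module property of $\nu^{-1}$ (the $\mathcal{B}$-analogue of Lemma \ref{canproperty}(xii)) moves $b\ps{1}$ out of the $\nu$-arguments, turning these into $\nu^{+}(b\ps{2}\ot(m\ot t)\nsb{-1})\,b\ps{1}$ and $\nu^{-}(b\ps{2}\ot(m\ot t)\nsb{-1})$, which is precisely the right-hand side of \eqref{SAYD2}. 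The care needed here is in keeping the $C$-, $S$- and $\mathcal{K}$-balancings aligned leg by leg while the Sweedler components of $b$ are routed through $\nu^{\pm}$.

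Stability is then comparatively short. Computing $(m\ot t)\nsb{0}\triangleleft(m\ot t)\nsb{-1}$ from \eqref{reduced-coaction} collapses the pair $can_{-}(t\ps{2}\ns{0}\ot t\ps{1})\triangleleft can_{+}(t\ps{2}\ns{0}\ot t\ps{1})$ to $\ve(t\ps{2}\ns{0})\,t\ps{1}$ via Lemma \ref{canproperty}(iii) (equivalently Lemma \ref{kappaproperty}(ii)), leaving $t\ps{2}\ns{-1}\ve(t\ps{2}\ns{0})\triangleright m\ot t\ps{1}$. The counit axiom \eqref{comodulecoalgebra-1} for the $\mathcal{K}$-comodule coalgebra $T$, the balancing relation of $M\Co_{\mathcal{K}}T$, and the stability $m\ns{1}\triangleright m\ns{0}=m$ of $M$ over $\mathcal{K}$ then recover $m\ot t$, completing the verification.
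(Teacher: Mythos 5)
Your proposal is correct, and its skeleton coincides with the paper's: an axiom-by-axiom verification built on the same toolkit --- Lemma \ref{canproperty}(xi) and (xii) together with the $\mathcal{B}$-colinearity \eqref{b-comodule-nu} of $\nu^{-1}$ for the AYD identity, Lemma \ref{kappaproperty}(ii) for stability, the module-coalgebra splitting of $(t\triangleleft b)\ps{1}\ot (t\triangleleft b)\ps{2}$ and the $\mathcal{K}$-equivariance \eqref{k-equivariant} exactly where the paper uses them (your AYD chain is the paper's computation read in the opposite direction, from the coaction of $m\ot(t\triangleleft b)$ down to the $\nu^{\pm}$-expression). The one genuine divergence is where the SAYD hypothesis on $M$ is consumed. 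In the paper's written proof the AYD identity \eqref{SAYD} of $M$ over $\mathcal{K}$ is never invoked: its ``well-definedness'' computation for the coaction is the $C$-colinearity statement (compatibility of the $\mathcal{B}$-leg with the canonical left $C$-coaction, proved from the comodule-coalgebra axioms and Lemma \ref{kappaproperty}(iii)), and the only visible use of the hypothesis on $M$ is its stability, consumed inside the \emph{counitality} check via the balancing relation, $t\ns{-1}\triangleright m\ot t\ns{0}=m\ns{1}\triangleright m\ns{0}\ot t=m\ot t$. You instead single out the verification that the value of \eqref{reduced-coaction} lies in $M\Co_{\mathcal{K}}T$, and spend \eqref{SAYD} (with Lemma \ref{canproperty}(v) and the balancing) precisely there. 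That check is indeed necessary --- without it the coaction is not even known to take values in $\mathcal{B}\ot\widetilde{M}$ --- and the paper leaves it implicit, so your allocation is the more complete one and explains why the theorem needs an AYD module rather than an arbitrary stable module-comodule; the analogous landing computation does appear in the paper, but only in the proof that the cocyclic operator of \eqref{cocyclic-module} is well defined. Two execution caveats: your stability check invokes stability of $M$ where the paper instead closes with \eqref{comodulecoalgebra-1} and unitality of the $\mathcal{K}$-action (both routes work, since both terminate in the same expression), and coassociativity will require the comodule-coalgebra axioms \eqref{comodulecoalgebra-2}--\eqref{comodulecoalgebra-3} and Lemma \ref{kappaproperty}(iii) in addition to the anti-coalgebra property of $\kappa$, which by itself cannot shuffle the $\mathcal{K}$-coactions across the Sweedler legs of $t$.
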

\begin{proof}   We prove that the coaction defined in \eqref{coaction}  is coassociative. We do not use the over line for the elements in the quotient to avoid cumbersome formulae.

\begin{align*}
 & \left[\kappa\left( (t\ps{2}\ns{0})\ps{2} \ot t\ps{1}\right)\right]\ps{1} \ot \left[\kappa\left( (t\ps{2}\ns{0})\ps{2} \ot t\ps{1}\right)\right]\ps{2}  \\
 &\ot t\ps{2}\ns{-1}\triangleright m\ot (t\ps{2}\ns{0})\ps{1}\\
 &=\kappa\left( (t\ps{2}\ns{0})\ps{2}\ps{2} \ot t\ps{1}\ps{1}\right) \ot \kappa\left( (t\ps{2}\ns{0})\ps{2}\ps{1} \ot t\ps{1}\ps{2}\right)\\
 &  \ot t\ps{2}\ns{-1}\triangleright m\ot (t\ps{2}\ns{0})\ps{1}\\
 &=\left[\kappa\left( (t\ps{3}\ns{0})\ps{3} \ot t\ps{1}\right)\right] \ot \left[\kappa\left( (t\ps{3}\ns{0})\ps{2} \ot t\ps{2}\right)\right] \\
 & \ot t\ps{3}\ns{-1}\triangleright m\ot (t\ps{3}\ns{0})\ps{1}\\
 &=\kappa\left( (t\ps{3}\ps{3})\ns{0} \ot t\ps{1}\right) \ot \kappa\left( (t\ps{3}\ps{2})\ns{0} \ot t\ps{2}\right) \\
 & \ot t\ps{3}\ps{1}\ns{-1}t\ps{3}\ps{2}\ns{-1}t\ps{3}\ps{3}\ns{-1}\triangleright m\ot (t\ps{3}\ps{1})\ns{0}\\
 &=\kappa\left( t\ps{5}\ns{0} \ot t\ps{1}\right) \ot \kappa\left( t\ps{4}\ns{0} \ot t\ps{2}\right) \\
  &\ot t\ps{3}\ns{-1}t\ps{4}\ns{-1}t\ps{5}\ns{-1}\triangleright m\ot t\ps{3}\ns{0}\\
 &=  \kappa\left( t\ps{5}\ns{0} \ot t\ps{1}\right)\ot \kappa\left(t\ps{4}\ns{0}\ot t\ps{2}\ns{0} \right)\\
  &\ot  t\ps{3}\ns{-1}\ps{2}t\ps{4}\ns{-1}\ps{2}t\ps{2}\ns{-1}t\ps{3}\ns{-1}\ps{1}t\ps{4}\ns{-1}\ps{1}t\ps{5}\ns{-1}m\ot t\ps{3}\ns{0}\\
    &=  \kappa\left( t\ps{4}\ns{0} \ot t\ps{1}\right)\ot \kappa\left(\left[t\ps{3}\ps{2}\right]\ns{0}\ot t\ps{2}\ns{0} \right)\\
  &\ot  t\ps{3}\ps{1}\ns{-1}\ps{2}t\ps{3}\ps{2}\ns{-1}\ps{2}t\ps{2}
  \ns{-1}t\ps{3}\ps{1}\ns{-1}\ps{1}t\ps{3}\ps{2}\ns{-1}\ps{1}t\ps{4}\ns{-1}m\\
  &\ot \left[t\ps{3}\ps{1}\right]\ns{0}\\
 \end{align*}
\begin{align*}
 &=  \kappa\left( t\ps{4}\ns{0} \ot t\ps{1}\right)\ot \kappa\left(\left[t\ps{3}\ns{0}\right]\ps{2}\ot t\ps{2}\ns{0} \right)\\
  &\ot  t\ps{3}\ns{-1}\ps{2}t\ps{2}\ns{-1}t\ps{3}\ns{-1}\ps{1}t\ps{4}\ns{-1}m\ot \left[t\ps{3}\ns{0}\right]\ps{1}\\
 &=  \kappa\left( t\ps{4}\ns{0} \ot t\ps{1}\right)\ot \kappa\left(\left[t\ps{3}\ns{0}\ns{0}\right]\ps{2}\ot t\ps{2}\ns{0} \right)\\
  &\ot  t\ps{3}\ns{0}\ns{-1}t\ps{2}\ns{-1}t\ps{3}\ns{-1}t\ps{4}\ns{-1}m\ot \left[t\ps{3}\ns{0}\ns{0}\right]\ps{1}\\
  &=  \kappa\left( t\ps{2}\ps{3}\ns{0} \ot t\ps{1}\right)\ot \kappa\left(\left[t\ps{2}\ps{2}\ns{0}\ns{0}\right]\ps{2}\ot t\ps{2}\ps{1}\ns{0} \right)\\
  &\ot  t\ps{2}\ps{2}\ns{0}\ns{-1}t\ps{2}\ps{1}\ns{-1}t\ps{2}\ps{2}\ns{-1}t\ps{2}\ps{3}\ns{-1}m\ot \left[(t\ps{2}\ps{2})\ns{0}\ns{0}\right]\ps{1}\\
 &=  \kappa\left( (t\ps{2}\ns{0})\ps{3} \ot t\ps{1}\right)\ot \kappa\left(\left[(t\ps{2}\ns{0})\ps{2}\ns{0}\right]\ps{2}\ot (t\ps{2}\ns{0})\ps{1} \right)\\
  &\ot  (t\ps{2}\ns{0})\ps{2}\ns{-1}t\ps{2}\ns{-1}m\ot \left[(t\ps{2}\ns{0})\ps{2}\ns{0}\right]\ps{1}\\
  &=  \kappa\left( (t\ps{2}\ns{0})\ps{2} \ot t\ps{1}\right)\ot \kappa\left(\left[(t\ps{2}\ns{0})\ps{1}\ps{2}\ns{0}\right]\ps{2}\ot (t\ps{2}\ns{0})\ps{1}\ps{1} \right)\\
  &\ot  (t\ps{2}\ns{0})\ps{1}\ps{2}\ns{-1}t\ps{2}\ns{-1}m\ot \left[(t\ps{2}\ns{0})\ps{1}\ps{2}\ns{0}\right]\ps{1}.
\end{align*}
We use the anti-coalgebra map property of $\kappa$ in the first equality,  the left $\mathcal{K}$-comodule coalgebra property of $T$ in the third equality, Lemma \ref{kappaproperty}[iii] and the left $\mathcal{K}$-comodule coalgebra property of $T$  in the fifth equality,  and finally   the left $\mathcal{K}$-comodule coalgebra property of $T$ in the seventh equality.

The following computation shows that the coaction defined in \eqref{coaction}  is counital.
\begin{align*}
   &\ve\left[\kappa\left( (t\ps{2}\ns{0})\ps{2} \ot t\ps{1}\right)\right] t\ps{2}\ns{-1}\triangleright m\ot (t\ps{2}\ns{0})\ps{1}\\
   &=\ve\left[ \can_-\left((t\ps{2}\ns{0})\ps{2} \ot t\ps{1} \right)\right]\ve\left[ \can_+\left((t\ps{2}\ns{0})\ps{2} \ot t\ps{1} \right)\right]t\ps{2}\ns{-1}\triangleright m\\
   &\ot (t\ps{2}\ns{0})\ps{1}\\
   &\ve[t\ps{2}\ns{0})\ps{2}] \ve(t\ps{1})t\ps{2}\ns{-1}\triangleright m\ot (t\ps{2}\ns{0})\ps{1}= \ve(t\ns{0}\ps{2})t\ns{-1}\triangleright m\ot t\ns{0}\ps{1}\\
   &=t\ns{-1}\triangleright m\ot t\ns{0}=m\ns{1}\triangleright m\ns{0}\ot t= m\ot t.
\end{align*}
We use Lemma \ref{canproperty}(iv) in the second equality, the fact that $m\ot t\in M\Co_{\mathcal{K}} T$ in the penultimate equality,  and eventually  the stability condition in the last equality.
The following computation proves the stability condition.
\begin{align*}
   & \left[t\ps{2}\ns{-1}\triangleright m\ot (t\ps{2}\ns{0})\ps{1}\right]\triangleleft \kappa\left( (t\ps{2}\ns{0})\ps{2} \ot t\ps{1}\right)\\
   &=t\ps{2}\ns{-1}\triangleright m\ot \left[(t\ps{2}\ns{0})\ps{1}\triangleleft \kappa\left( (t\ps{2}\ns{0})\ps{2} \ot t\ps{1}\right)\right]\\
   &=t\ps{2}\ns{-1}\triangleright m\ot \ve(t\ps{2}\ns{0})t\ps{1}= \ve(t\ps{2}\ns{0})\eta(\alpha(t\ps{2}\ns{-1}))= m\ot t.
\end{align*}

We use Lemma \ref{kappaproperty}(ii) in the second equality and left $\mathcal{K}$-comodule coalgebra property of $T$ in the third equality.

The following computation shows  that   the  right action $\widetilde{M}\Co_{C}\mathcal{B}\longrightarrow \widetilde{M}$, defined in \eqref{action} is well-defined.
  \begin{align*}
  & m\ns{0}\ot m\ns{1}\ot t\triangleleft b= m \ot t\ns{-1}\ot t\ns{0}\triangleleft b = m\ot (t\triangleleft b)\ns{-1}\ot  (t\triangleleft b)\ns{0}.
  \end{align*}
We use $m\ot t  \in M\Co_{\mathcal{K}}T $ in the first equality and the $\mathcal{K}$-equivariant property of the action of $\mathcal{B}$ over $T$ on the second equality. The associativity of the action defined in \eqref{action} is obvious by the associativity of the right action of $\mathcal{B}$ over  $T$.

Here we prove the AYD condition. In the following, we use the AYD condition in the first equality, the coaction defined in \eqref{coaction} in the second equality, the definition of $\kappa$ in the third equality, the equation obtained by applying $\Id\ot \ve\ot \Id$ on the left $\mathcal{B}$ module map property \eqref{b-comodule-nu} in the fourth equality, left $\mathcal{B}$-module coalgebra property of $T$ in the fifth equality,  Lemma \ref{canproperty}(xii) in the sixth equality, Lemma \ref{canproperty}(xi) in the seventh equality, left $\mathcal{B}$-module coalgebra property of $T$ in the eighth  equality and  $\mathcal{K}$-equivariant property of the right action of $\Bc$ on $T$ mentioned in  \eqref{k-equivariant} in the ninth equality.
\begin{align*}
  &\left[(m\ot t)\triangleleft b\right]\ns{-1}\ot \left[(m\ot t)\triangleleft b\right]\ns{0}\\
  &=\nu^+\left(b\ps{2}, (m\ot t)\ns{-1}\right)b\ps{1}\ot (m\ot t)\ns{0}\triangleleft \nu^-\left(b\ps{2}, (m\ot t)\ns{-1}\right)\\
  &=\nu^+\left(b\ps{2}, \kappa\left( (t\ps{2}\ns{0})\ps{2} \ot t\ps{1}\right)\right)b\ps{1} \\
  &\ot t\ps{2}\ns{-1}\triangleright m\ot \left[(t\ps{2}\ns{0})\ps{1}\triangleleft \nu^-\left(b\ps{2}, \kappa\left( (t\ps{2}\ns{0})\ps{2} \ot t\ps{1}\right)\right)\right]\\
  &=\ve\left\{ \can_-\left[ t\ps{2}\ns{0}\ps{2} \ot t\ps{1}\right]\right\}\nu^+\left(b\ps{2}, \can_+\left[ t\ps{2}\ns{0}\ps{2} \ot t\ps{1}\right]\right)b\ps{1} \\
  &\ot t\ps{2}\ns{-1}\triangleright m\ot \left[t\ps{2}\ns{0}\ps{1}\triangleleft \nu^-\left(b\ps{2}, \can_+\left[ t\ps{2}\ns{0}\ps{2} \ot t\ps{1}\right]\right)\right]\\
  &=\ve\left\{ \can_-\left[t\ps{2}\ns{0}\ps{2} \ot t\ps{1}\right]\right\} \ve\left\{ \nu^-\left(b\ps{3},\can_+\left[t\ps{2}\ns{0}\ps{2} \ot t\ps{1}\right] \right)\right\}\\
  &\nu^+\left(b\ps{3},\can_+\left[t\ps{2}\ns{0}\ps{2} \ot t\ps{1}\right] \right)b\ps{1}\ot t\ps{2}\ns{-1}\triangleright m\ot \left(t\ps{2}\ns{0}\ps{1}\triangleleft b\ps{2}\right)\\
  &=\ve\left\{ \can_-\left[t\ps{2}\ns{0}\ps{2} \ot t\ps{1}\right]\triangleleft \nu^-\left(b\ps{3},\can_+\left[t\ps{2}\ns{0}\ps{2} \ot t\ps{1}\right] \right)\right\}\\
  &\nu^+\left(b\ps{3},\can_+\left[t\ps{2}\ns{0}\ps{2} \ot t\ps{1}\right] \right)b\ps{1}\ot t\ps{2}\ns{-1}\triangleright m\ot \left(t\ps{2}\ns{0}\ps{1}\triangleleft b\ps{2}\right)\\
  \end{align*}
  \begin{align*}
  &=\ve\left\{ \can_-\left[\left(t\ps{2}\ns{0}\ps{2}\triangleleft b\ps{3}\right) \ot t\ps{1}\right]\right\}\can_+\left[\left(t\ps{2}\ns{0}\ps{2}\triangleleft b\ps{3}\right) \ot t\ps{1}\right]b\ps{1}\\
  &\ot t\ps{2}\ns{-1}\triangleright m\ot \left(t\ps{2}\ns{0}\ps{1}\triangleleft b\ps{2}\right)\\
  &=\ve\left\{ \can_-\left[\left(t\ps{2}\ns{0}\ps{2}\triangleleft b\ps{3}\right) \ot (t\ps{1}\triangleleft b\ps{1})\right]\right\}\\
  &\can_+\left[\left(t\ps{2}\ns{0}\ps{2}\triangleleft b\ps{3}\right) \ot (t\ps{1}\triangleleft b\ps{1})\right]\ot t\ps{2}\ns{-1}\triangleright m\ot \left(t\ps{2}\ns{0}\ps{1}\triangleleft b\ps{2}\right)\\
  &=\ve\left\{ \can_-\left[\left(t\ps{2}\ns{0}\triangleleft b\ps{2}\right)\ps{2} \ot (t\ps{1}\triangleleft b\ps{1})\right]\right\}\\
  &\can_+\left[\left(t\ps{2}\ns{0}\triangleleft b\ps{2}\right)\ps{2} \ot (t\ps{1}\triangleleft b\ps{1})\right]\ot t\ps{2}\ns{-1}\triangleright m\ot \left(t\ps{2}\ns{0}\triangleleft b\ps{2}\right)\ps{1}\\
  &=\ve\left\{ \can_-\left[\left(t\ps{2}\triangleleft b\ps{2}\right)\ns{0}\ps{2} \ot (t\ps{1}\triangleleft b\ps{1})\right]\right\}\\
  &\can_+\left[\left(t\ps{2}\triangleleft b\ps{2}\right)\ns{0}\ps{2} \ot (t\ps{1}\triangleleft b\ps{1})\right]\ot
  \left(t\ps{2}\triangleleft b\ps{2}\right)\ns{-1}\triangleright m\\
  &~~~~~~~~~~~~~~~~~~~~~~~~~~~~~~~~~~~~~~~~~~~~~~~~~~~~~~~~~~~~~~~~~~~~~\ot \left(t\ps{2}\triangleleft b\ps{2}\right)\ns{0}\ps{1}\\
  &=\kappa\left[\left(t\ps{2}\triangleleft b\ps{2}\right)\ns{0}\ps{2} \ot (t\ps{1}\triangleleft b\ps{1})\right]\ot \left(t\ps{2}\triangleleft b\ps{2}\right)\ns{-1}\triangleright m\\
  &~~~~~~~~~~~~~~~~~~~~~~~~~~~~~~~~~~~~~~~~~~~~~~~~~~~~~~~~~~~~~~~~~~~~~\ot \left(t\ps{2}\triangleleft b\ps{2}\right)\ns{0}\ps{1}\\
  &=\kappa\left[\left((t\triangleleft b)\ps{2}\ns{0}\right)\ps{2} \ot (t\triangleleft b)\ps{1}\right]\ot (t\triangleleft b)\ps{2}\ns{-1}\triangleright m\ot \left((t\triangleleft b)\ps{2}\ns{0}\right)\ps{1}\\
  &=\left[m\ot t\triangleleft b\right]\ns{-1}\ot \left[m\ot t\triangleleft b\right]\ns{0}.
\end{align*}

\end{proof}

\medskip

Let us denote the category of SAYD modules over a left $\times$-Hopf coalgebra $\mathcal{K}$ by $_{\mathcal{K}}SAYD^{\mathcal{K}}$. Its objects are all left-right SAYD modules over $\mathcal{K}$ and its morphisms are all $\mathcal{K}$-linear-colinear maps. Similarly one denotes by $^{\mathcal{B}}SAYD_{\mathcal{B}}$, the category of right-left SAYD modules over a right $\times$-Hopf coalgebra $\mathcal{B}$. We see that Theorem \ref{mainSAYD} amounts to a object map of a functor $\digamma$ from  $_{\mathcal{K}}SAYD^{\mathcal{K}}$ to $^{\mathcal{B}}SAYD_{\mathcal{B}}$. In fact, if $M,N\in _{\mathcal{K}}SAYD^{\mathcal{K}}$, then $\digamma(M)=\widetilde{M}$, $\digamma(N)=\widetilde{N}$. Also if $\phi: M\longrightarrow N$,  we set  $\digamma(\phi)=\phi\Co_{\mathcal{K}} Id_{T}$.
\begin{proposition}
  The assignment $\digamma: _{\mathcal{K}}SAYD^{\mathcal{K}}\longrightarrow ^{\mathcal{B}}SAYD_{\mathcal{B}}$ defines a covariant functor.
\end{proposition}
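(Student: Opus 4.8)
The plan is to produce the action of $\digamma$ on morphisms and then verify, in order, well-definedness, compatibility with the induced $\mathcal{B}$-module and $\mathcal{B}$-comodule structures, and functoriality. Theorem \ref{mainSAYD} already supplies the object map $M\mapsto\widetilde{M}=M\Co_{\mathcal{K}}T$, so only the morphism part is genuinely new. Given a morphism $\phi:M\longrightarrow N$ in ${}_{\mathcal{K}}SAYD^{\mathcal{K}}$, that is a map which is simultaneously left $\mathcal{K}$-linear and right $\mathcal{K}$-colinear, I set $\digamma(\phi)=\phi\Co_{\mathcal{K}}\Id_T$ as in the statement.

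First I would check that $\digamma(\phi)$ is well defined. Because $\phi$ is right $\mathcal{K}$-colinear, $\phi(m)\ns{0}\ot\phi(m)\ns{1}=\phi(m\ns{0})\ot m\ns{1}$, so $\phi\ot\Id_T$ carries the cotensor relation defining $M\Co_{\mathcal{K}}T$ to the one defining $N\Co_{\mathcal{K}}T$; hence $\digamma(\phi)$ descends to the cotensor product and takes values in $\widetilde{N}$.

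Next comes the heart of the argument: the induced $\mathcal{B}$-structures on $\widetilde{M}$ touch the $M$-slot only through the $\mathcal{K}$-module and $\mathcal{K}$-comodule structures that $\phi$ already respects. The right $\mathcal{B}$-action \eqref{action} is $(m\ot t)\triangleleft b=m\ot(t\triangleleft b)$, which alters only the $T$-slot, so $\digamma(\phi)\big((m\ot t)\triangleleft b\big)=\phi(m)\ot(t\triangleleft b)=\big(\digamma(\phi)(m\ot t)\big)\triangleleft b$, giving right $\mathcal{B}$-linearity with no further work. For left $\mathcal{B}$-colinearity I would inspect the coaction \eqref{coaction}: the element $\kappa\big((t\ps{2}\ns{0})\ps{2}\ot t\ps{1}\big)\in\mathcal{B}$ and the trailing factor $(t\ps{2}\ns{0})\ps{1}$ depend on $t$ alone, while $m$ enters only inside $t\ps{2}\ns{-1}\triangleright m$. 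Since $\phi$ is left $\mathcal{K}$-linear, $\phi(t\ps{2}\ns{-1}\triangleright m)=t\ps{2}\ns{-1}\triangleright\phi(m)$, so $(\Id_{\mathcal{B}}\ot\digamma(\phi))$ applied after the coaction agrees with the coaction applied after $\digamma(\phi)$. This is the single place where a real compatibility must be read off the complicated formula \eqref{coaction}, and it is the step I expect to demand the most care, even though it collapses to one application of $\mathcal{K}$-linearity.

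Finally, functoriality is formal, since $\digamma(\phi)=\phi\Co_{\mathcal{K}}\Id_T$ merely cotensors a morphism with the fixed identity on $T$: one has $\digamma(\Id_M)=\Id_M\Co_{\mathcal{K}}\Id_T=\Id_{\widetilde{M}}$, and $\digamma(\psi\circ\phi)=(\psi\circ\phi)\Co_{\mathcal{K}}\Id_T=(\psi\Co_{\mathcal{K}}\Id_T)\circ(\phi\Co_{\mathcal{K}}\Id_T)=\digamma(\psi)\circ\digamma(\phi)$ for composable morphisms $\phi,\psi$. Together these verifications show that $\digamma$ is a covariant functor from ${}_{\mathcal{K}}SAYD^{\mathcal{K}}$ to ${}^{\mathcal{B}}SAYD_{\mathcal{B}}$.
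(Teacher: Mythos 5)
Your proposal is correct and follows the same route as the paper: the paper's proof simply cites Theorem \ref{mainSAYD} for the object map and asserts that the module and comodule map properties of $\digamma(\phi)=\phi\Co_{\mathcal{K}}\Id_T$ are easy to check. Your write-up supplies exactly those omitted verifications (well-definedness via right $\mathcal{K}$-colinearity, $\mathcal{B}$-linearity since the action only touches the $T$-slot, $\mathcal{B}$-colinearity via left $\mathcal{K}$-linearity applied to the factor $t\ps{2}\ns{-1}\triangleright m$ in \eqref{coaction}, and formal functoriality), and they are all sound.
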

\begin{proof}
 Using Theorem \ref{mainSAYD}, the map  $\digamma$ is an object map.  It is easy to check that $\digamma(\phi)$ is a module  and comodule map.
\end{proof}
Let us recall that $$^{\mathcal{K}}C^n(T,M)= M\Co_{\mathcal{K}}T^{\Co_S (n+1)}, \quad \widetilde{C}^{\mathcal{B},n}(\mathcal{B},\widetilde{M})=\mathcal{B}^{\Co_C n}\Co_{C_{{\rm cop}}}\widetilde{M},$$ are the cocyclic modules defined in \eqref{cocyclic-module} and \eqref{dual-cocyclic-module}, respectively. We define the following map,
\begin{equation}
  \omega_n: \mathcal{B}^{\Co_C n}\Co_{C_{{\rm cop}}}\widetilde{M} \longrightarrow M\Co_{\mathcal{K}}T^{\Co_S (n+1)},
\end{equation}
given by
\begin{align*}
 &\omega_n( b_1\ot\cdots \ot b_n\ot m\ot t)= m\ot t\ps{1}\ot t\ps{2}\triangleleft b_1\ps{1}\ot t\ps{3}\triangleleft [b_1\ps{2} b_2\ps{1}]\ot\cdots\ot\\
 &t\ps{n}\triangleleft [b_1\ps{n-1}\cdots  b_{n-1}\ps{1}]\ot t\ps{n+1}\triangleleft [b_1\ps{n}\cdots  b_{n-1}\ps{2} b_n],
\end{align*}
with an inverse map
\begin{align*}
 & \omega^{-1}_n(m\ot t_0\ot \cdots\ot t_n)\\
 &=\kappa(t_0\ps{2}\ot t_1\ps{1})\ot \kappa(t_1\ps{2}\ot t_2\ps{1})\ot \cdots \ot \kappa(t_{n-1}\ps{2}\ot t_n)\ot m\ot t_0\ps{1}.
\end{align*}

\begin{theorem}\label{theorem-isomorphism}
Let $^{\mathcal{K}}T(S)_{\mathcal{B}}$ be a $\mathcal{K}$-equivariant $\mathcal{B}$-Galois coextension, and $M$ be a left-right SAYD module over $\mathcal{K}$. The map $\omega_*$ defines  an isomorphism  of cocyclic modules between $^{\mathcal{K}}C^n(T,M)$ and $\widetilde{C}^{\mathcal{B},n}(\mathcal{B},\widetilde{M})$, which are defined in \eqref{cocyclic-module} and \eqref{dual-cocyclic-module} respectively. Here $\widetilde{M}= M\Co_{\mathcal{K}} T$ is the right-left SAYD module over $\mathcal{B}$ introduced in Theorem \ref{mainSAYD}.
\end{theorem}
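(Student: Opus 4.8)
The plan is to show that $\omega_n$ and $\omega^{-1}_n$ are mutually inverse isomorphisms of vector spaces, and then that the family $\omega_*$ commutes with all cofaces, codegeneracies and the cyclic operator, so that it is an isomorphism in the category of cocyclic modules. The conceptual content is that the canonical Galois bijection $can$ of \eqref{can} lets one trade each slot $t_{i+1}\triangleleft(\cdots)$ of $T^{\Co_S(n+1)}$ against a factor of $\mathcal{B}$, while $\kappa=(\ve\ot\Id_{\mathcal{B}})\overline{can}^{-1}$ implements the reverse substitution; thus $\omega_n$ is essentially an iterate of $can$ and $\omega^{-1}_n$ an iterate of $\kappa$.

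First I would check well-definedness. For $\omega_n$ one must verify that the image lies in $M\Co_{\mathcal{K}}T^{\Co_S(n+1)}$, which amounts to the $\mathcal{K}$-colinearity of the whole expression together with the $S$-balancing between consecutive $T$-slots; both follow from the $\mathcal{K}$-equivariance \eqref{k-equivariant}, the cotensor condition defining $\mathcal{B}^{\Co_C n}\Co_{C_{\rm cop}}\widetilde{M}$, and the module-coalgebra axioms of Definition \ref{module-coalgebra}. For $\omega^{-1}_n$ one uses the anti-coalgebra property of $\kappa$ (Lemma \ref{kappaproperty}(i)) and the comodule identity Lemma \ref{kappaproperty}(iii) to see that the $\kappa$-factors assemble into an element of $\mathcal{B}^{\Co_C n}$ cotensored over $C_{\rm cop}$ with $\widetilde{M}=M\Co_{\mathcal{K}}T$, where the relevant coaction is the one of \eqref{coaction}.

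Next I would prove $\omega_n\omega^{-1}_n=\Id$ and $\omega^{-1}_n\omega_n=\Id$. The key inputs are the reduced form \eqref{v-v-v}, namely $t\ps{1}\ot\kappa(t\ps{2}\ot t')=can_-(t\ot t')\ot can_+(t\ot t')$, together with the fundamental relations $can\circ can^{-1}=\Id$ and $can^{-1}\circ can=\Id$ of Lemma \ref{canproperty}(i),(ii), and the contraction identity Lemma \ref{kappaproperty}(ii), $t\ps{1}\triangleleft\kappa(t\ps{2}\ot t')=\ve(t)t'$. Telescoping these through the $n$ tensor slots collapses each composite to the identity; this is a direct but bookkeeping-heavy Sweedler computation.

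Finally I would verify that $\omega_*$ is a map of cocyclic modules, i.e. $\omega\circ\delta_i=\delta_i\circ\omega$, $\omega\circ\sigma_i=\sigma_i\circ\omega$ and $\omega\circ\tau_n=\tau_n\circ\omega$ for the operators \eqref{cocyclic-module} and \eqref{dual-cocyclic-module}. The cofaces and codegeneracies are comparatively mechanical: inserting $\Delta(t_i)$ on the $T$-side corresponds to splitting a product-type slot on the $\mathcal{B}$-side, and applying $\ve(t_i)$ corresponds to the multiplication $b_ib_{i+1}$ and to the $\eta$-insertions, all of which match after the coalgebra axioms and Lemma \ref{canproperty}(iii),(iv). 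The genuine obstacle is compatibility with the cyclic operator $\tau_n$: on the source it rotates $t_0,\dots,t_n$ and feeds $t_1\ns{-1}\cdots t_n\ns{-1}$ into the $\mathcal{K}$-action on $M$, whereas on the target it uses the $\mathcal{B}$-SAYD coaction of $\widetilde{M}$ built from $\kappa$ together with $\nu^{\pm}$ of $\mathcal{B}$. Reconciling the two requires the full package of Lemma \ref{canproperty}(v)--(xii) --- in particular the left $\mathcal{K}$-colinearity (v), the $C$-colinearity (vi),(vii), and the module-twist identities (x),(xi),(xii) --- combined with the anti-coalgebra and comodule properties of $\kappa$ (Lemma \ref{kappaproperty}) and the AYD/stability manipulations already carried out in the proof of Theorem \ref{mainSAYD}. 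I expect this single verification to constitute essentially all of the difficulty, the remaining identities being formal once it is in place.
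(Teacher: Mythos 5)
Your proposal takes essentially the same route as the paper's proof: the paper likewise factors $\omega_n$ as an iterate of the canonical map (writing $\omega = can^{\ot n}\circ tw_{\mathcal{B}^{\ot n}\ot \widetilde{M}}$), establishes mutual inverseness from Lemma \ref{canproperty}(ii), the anti-coalgebra property of $\kappa$ and \eqref{v-v-v}, declares the compatibility with cofaces and codegeneracies a routine computation left to the reader, and spends the entire written argument on the cyclic operator, exactly as you predict. The only divergence is cosmetic, in the precise toolbox invoked for that last check (the paper's displayed computation runs through \eqref{k-equivariant}, Lemma \ref{kappaproperty}(ii), Lemma \ref{property2}(v), \eqref{b-comodule-nu} and \eqref{brzmil23} rather than Lemma \ref{canproperty}(x)--(xii)), which does not affect the strategy.
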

\begin{proof}
One notes that $\omega= \can^{\ot n}\circ tw_{\mathcal{B}^{\ot n}\ot \widetilde{M}}$, where

\begin{align*}
&\can^{\ot n}= (\Id_M\ot \Id_{T} \ot \cdots \ot \Id_T\ot \can)\circ \\
&\cdots \circ(\Id_M\ot \Id_{T}\ot \can \ot \Id_{\mathcal{B}}\ot \cdots \ot\Id_{\mathcal{B}}  )\circ (\Id_M\ot \can \ot \Id_{\mathcal{B}}\ot \cdots \ot \Id_{\mathcal{B}}).
\end{align*}

 We use Lemma \eqref{canproperty}(ii) to  easily prove $\omega^{-1}\circ \omega= \Id$. Also by anti-coalgebra map property of $\kappa$ and \eqref{v-v-v} one can show $\omega \circ \omega^{-1}= \Id.$
By a very routine and long  computation reader will check that $\omega$ is well-defined and  commutes with faces and degeneracies. Here we only show  that $\omega$ commutes with the cyclic maps.

\begin{align*}
 & \mathfrak{t}_n \omega_n(b_1\ot\cdots \ot b_n\ot m\ot t)\\
 &=\mathfrak{t}_{n} \{ m\ot t\ps{1}\ot t\ps{2}\triangleleft b_1\ps{1}\ot t\ps{3}\triangleleft [b_1\ps{2} b_2\ps{1}]\ot\cdots\\
 &~~~~~~~~~~~~~~~~~\dots \ot t\ps{n}\triangleleft [b_1\ps{n-1}\cdots  b_{n-1}\ps{1}]\ot t\ps{n+1}\triangleleft [b_1\ps{n}\cdots  b_{n-1}\ps{2} b_n]\}\\
 &=\left[t\ps{2}\triangleleft b_1\ps{1}\right]\ns{-1}\cdots  \left[t\ps{n}\triangleleft [b_1\ps{n-1}\cdots  b_{n-1}\ps{1}]\right]\ns{-1}\\
 &~~~~~~~~~ \left[t\ps{n+1}\triangleleft [b_1\ps{n}\cdots  b_{n-1}\ps{2} b_n]\right]\ns{-1}\triangleright m\\
 &~~~~~~~~~~~~~~\ot \left[t\ps{2}\triangleleft b_1\ps{1}\right]\ns{0}\ot \cdots  \ot \left[t\ps{n}\triangleleft [b_1\ps{n-1}\cdots  b_{n-1}\ps{1}]\right]\ns{0}\\
 & ~~~~~~~~~~~~~~~~~~~~~~~~~~~~~~~~~~~~~~~~~\ot \left[t\ps{n+1}\triangleleft [b_1\ps{n}\cdots  b_{n-1}\ps{2} b_n]\right]\ns{0}\ot t\ps{1}\\
 &=t\ps{2}\ns{-1}\cdots  t\ps{n}\ns{-1} t\ps{n+1}\ns{-1}\triangleright m\ot \left[t\ps{2}\ns{0}\triangleleft b_1\ps{1}\right]\ot \cdots \\
 &\dots \ot \left[t\ps{n}\ns{0}\triangleleft [b_1\ps{n-1}\cdots  b_{n-1}\ps{1}]\right] \ot \left[t\ps{n+1}\ns{0}\triangleleft [b_1\ps{n}\cdots  b_{n-1}\ps{2} b_n]\right]\ot t\ps{1}\\
 &=t\ps{2}\ns{-1}\triangleright m \ot \left[(t\ps{2}\ns{0})\ps{1}\triangleleft b_1\ps{1}\right]\ot \left[(t\ps{2}\ns{0})\ps{2}\triangleleft b_1\ps{2} b_2\ps{1}\right]\\
 &\ot\cdots \ot \left[(t\ps{2}\ns{0})\ps{n}\triangleleft b_1\ps{n}b_2\ps{n-1}\cdots b_{n-1}\ps{2}b_n\right]\ot t\ps{1} \\
 &=t\ps{2}\ns{-1}\triangleright m \ot \left[(t\ps{2}\ns{0})\ps{1}\triangleleft b_1\ps{1}\right]\ot \left[(t\ps{2}\ns{0})\ps{2}\triangleleft b_1\ps{2} b_2\ps{1}\right] \ot \cdots\\
 &~~~~~~~~~~~~~~~\dots \ot \left[(t\ps{2}\ns{0})\ps{n}\triangleleft b_1\ps{n}b_2\ps{n-1}\cdots b_{n-1}\ps{2}b_n\right]\\
 &~~~~~~~~~~~~~~~~~~~~~~~~~~~~~~~~~~\ot (t\ps{2}\ns{0})\ps{n+1}\triangleleft \kappa\left( (t\ps{2}\ns{0})\ps{n+2} \ot t\ps{1}\right) \\
 &=t\ps{2}\ns{-1}\triangleright m \ot \left[(t\ps{2}\ns{0})\ps{1}\triangleleft b_1\ps{1}\right]\ot \left[(t\ps{2}\ns{0})\ps{2}\triangleleft b_1\ps{2} b_2\ps{1}\right]\\
 &\ot\cdots \ot \left[(t\ps{2}\ns{0})\ps{n}\triangleleft b_1\ps{n}b_2\ps{n-1}\cdots b_n\ps{1}\right]\\
 &\ot (t\ps{2}\ns{0})\ps{n+1}\triangleleft \ve\left(b_1\ps{n+1} b_2\ps{n}\cdots b_n\ps{2}\right)\kappa\left( (t\ps{2}\ns{0})\ps{n+2} \ot t\ps{1}\right) \\
 &=t\ps{2}\ns{-1}\triangleright m \ot \left[(t\ps{2}\ns{0})\ps{1}\triangleleft b_1\ps{1}\right]\ot \left(\left[(t\ps{2}\ns{0})\ps{2}\triangleleft b_1\ps{2}\right]\triangleleft b_2\ps{1}\right)\ot\cdots\\
 &\cdots \ot \left(\left[(t\ps{2}\ns{0})\ps{n}\triangleleft b_1\ps{n}\right]\triangleleft[b_2\ps{n-1}\cdots b_n\ps{1}]\right)\\
 &\ot (t\ps{2}\ns{0})\ps{n+1}\triangleleft \nu^-\left(b_1\ps{n+1} b_2\ps{n}\cdots b_n\ps{2},\kappa\left( (t\ps{2}\ns{0})\ps{n+2} \ot t\ps{1}\right)\right) \\
 &~~~~~~~~~~~~~~~~~~~~~~~~~~~~~~~~~~~~\nu^+\left(b_1\ps{n+1} b_2\ps{n}\cdots b_n\ps{2},\kappa\left( (t\ps{2}\ns{0})\ps{n+2} \ot t\ps{1}\right)\right)\\
  &=t\ps{2}\ns{-1}\triangleright m \ot \left[(t\ps{2}\ns{0})\ps{1}\triangleleft b_1\ps{1}\right]\ot \left(\left[(t\ps{2}\ns{0})\ps{2}\triangleleft b_1\ps{2}\right]\triangleleft b_2\ps{1}\right)\ot\cdots \\
 &\cdots\ot \left(\left[(t\ps{2}\ns{0})\ps{n}\triangleleft b_1\ps{n}\right]\triangleleft[b_2\ps{n-1}\cdots b_n\ps{1}]\right)\\
 &~~~~~~~~~~~~\ot \left((t\ps{2}\ns{0})\ps{n+1}\triangleleft \left(b_1\ps{n+1} b_2\ps{n}\cdots b_n\ps{2}\right)\ps{1}\right.\\
 &~~~~~~~~~~~~~~~~~~\left.\nu^+\left(\left(b_1\ps{n+1} b_2\ps{n}\cdots b_n\ps{2}\right)\ps{2},  \kappa\left( (t\ps{2}\ns{0})\ps{n+2} \ot t\ps{1}\right)\right)\right)\\
 &~~~~~~~~~~~~~~~~~~~~~~~~~~~~~~\ve\left\{\nu^-\left(b_1\ps{n+2}\cdots  b_n\ps{3},  \kappa\left( (t\ps{2}\ns{0})\ps{n+2} \ot t\ps{1}\right)\right)\right\}\\
 \end{align*}

 \begin{align*}
 &=t\ps{2}\ns{-1}\triangleright m \ot \left[(t\ps{2}\ns{0})\ps{1}\triangleleft b_1\ps{1}\right]\ot \left(\left[(t\ps{2}\ns{0})\ps{2}\triangleleft b_1\ps{2}\right]\triangleleft b_2\ps{1}\right)\ot\cdots\\
 &\cdots \ot \left(\left[(t\ps{2}\ns{0})\ps{n}\triangleleft b_1\ps{n}\right]\triangleleft[b_2\ps{n-1}\cdots b_n\ps{1}]\right)\\
 &~~~~~~~~~\ot \left(\left[(t\ps{2}\ns{0})\ps{n+1}\triangleleft b_1\ps{n+1}\right]\triangleleft [b_2\ps{n}\cdots b_n\ps{2}\right.\\
 &~~~~~~~~~~~~~~~~\left.\nu^+\left(b_1\ps{n+2}\cdots  b_n\ps{3},  \kappa\left( (t\ps{2}\ns{0})\ps{n+2} \ot t\ps{1}\right)\right)]\right)\\
 &~~~~~~~~~~~~~~~~~~~~~~\ve\left\{\nu^-\left(b_1\ps{n+2}\cdots  b_n\ps{3},  \kappa\left( (t\ps{2}\ns{0})\ps{n+2} \ot t\ps{1}\right)\right)\right\}\\
 &=t\ps{2}\ns{-1}\triangleright m \ot \left[(t\ps{2}\ns{0})\ps{1}\ps{1}\triangleleft b_1\ps{1}\ps{1}\right]\ot \left(\left[(t\ps{2}\ns{0})\ps{1}\ps{2}\triangleleft b_1\ps{1}\ps{2}\right]\triangleleft b_2\ps{1}\ps{1}\right)\ot \cdots\\
 &\cdots \ot \left(\left[(t\ps{2}\ns{0})\ps{1}\ps{n}\triangleleft b_1\ps{1}\ps{n}\right]\triangleleft[b_2\ps{1}\ps{n-1}\cdots b_n\ps{1}\ps{1}]\right)\\
 &\ot \left(\left[(t\ps{2}\ns{0})\ps{1}\ps{n+1}\triangleleft b_1\ps{1}\ps{n+1}\right]\triangleleft [b_2\ps{1}\ps{n}\cdots b_n\ps{1}\ps{2}\right.\\
 &~~~~~~~~~\left.\nu^+\left(b_1\ps{2}\cdots  b_n\ps{2},  \kappa\left( (t\ps{2}\ns{0})\ps{2} \ot t\ps{1}\right)\right)]\right)\\
 &~~~~~~~~~~~~~~~~\ve\left\{\nu^-\left(b_1\ps{2}\cdots  b_n\ps{2},  \kappa\left( (t\ps{2}\ns{0})\ps{2} \ot t\ps{1}\right)\right)\right\}\\
 &=t\ps{2}\ns{-1}\triangleright m \ot \left[(t\ps{2}\ns{0})\ps{1}\triangleleft b_1\ps{1}\right]\ps{1}\ot \left(\left[(t\ps{2}\ns{0})\ps{1}\triangleleft b_1\ps{1}\right]\ps{2}\triangleleft b_2\ps{1}\ps{1}\right)\\
 &\ot\cdots \ot \left(\left[(t\ps{2}\ns{0})\ps{1}\triangleleft b_1\ps{1}\right]\ps{n}\triangleleft[b_2\ps{1}\ps{n-1}\cdots b_n\ps{1}\ps{1}]\right)\\
 &\ot \left(\left[(t\ps{2}\ns{0})\ps{1}\triangleleft b_1\ps{1}\right]\ps{n+1}\triangleleft \left[b_2\ps{1}\ps{n}\cdots b_n\ps{1}\ps{2}\right.\right.\\
 &~~~~~~~~~~~~~~~~~~~~~~~~\left.\left.\nu^+\left(b_1\ps{2}\cdots  b_n\ps{2},  \kappa\left( (t\ps{2}\ns{0})\ps{2} \ot t\ps{1}\right)\right)\right]\right)\\
 &\ve\left\{\nu^-\left(b_1\ps{2}\cdots  b_n\ps{2},  \kappa\left( (t\ps{2}\ns{0})\ps{2} \ot t\ps{1}\right)\right)\right\}\\
 &=\omega_n\{ b_2\ps{1}\ot\cdots\ot b_n\ps{1}\ot \ve[\nu^-\left(b_1\ps{2}\cdots  b_n\ps{2}, \kappa\left( (t\ps{2}\ns{0})\ps{2} \ot t\ps{1}\right)\right)]\\
 &\nu^+\left(b_1\ps{2}\cdots  b_n\ps{2},  \kappa\left( (t\ps{2}\ns{0})\ps{2} \ot t\ps{1}\right)\right)\ot  t\ps{2}\ns{-1}\triangleright m\ot (t\ps{2}\ns{0})\ps{1}\triangleleft b_1\ps{1}\}\\
 &=\omega_n\{ b_2\ps{1}\ot\cdots\ot b_n\ps{1}\ot \ve[\nu^-(b_1\ps{2}\cdots  b_n\ps{2}, (m\ot t)\ns{-1})]\\
 &\nu^+(b_1\ps{2}\cdots  b_n\ps{2}, (m\ot t)\ns{-1})\ot (m\ot t)\ns{0}\triangleleft b_1\ps{1}\}\\
 &=\omega_n \tau_n (b_1\ot\cdots \ot b_n\ot m\ot t).
\end{align*}

We use the $\mathcal{K}$-equivariant property of the action of $\Bc$ on $T$ mentioned in \eqref{k-equivariant} in the third equality,  the left $\mathcal{K}$-comodule coalgebra property of $T$ in the fourth equality, Lemma \ref{kappaproperty}(ii) in the fifth equality, the coalgebra property of $\mathcal{B}$ in the sixth equality, Lemma \ref{property2}(v) in the seventh equality,  the relation obtained by applying $\Id\ot \ve\ot \Id$ on the left $\mathcal{B}$-comodule property \eqref{b-comodule-nu} in the eighth equality, the relation \eqref{brzmil23}  in the ninth equality, the coassociativity of the comultiplication $\mathcal{B}$ in the tenth equality and  the right $\mathcal{B}$-comodule coalgebra property of $T$ the eleventh  equality.

\end{proof}

  \end{document}